\documentclass{amsart}



\usepackage[table,xcdraw]{xcolor}
\usepackage{amssymb}
\usepackage{amsmath}
\usepackage{amsfonts}
\usepackage{amsxtra}
\usepackage{amsthm}
\usepackage{tikz}
\usepackage{tikz-cd}
\usepackage{float}
\usepackage{booktabs}
\usepackage{color}
\usepackage{xcolor}
\usepackage{graphicx}
\usepackage{enumitem}
\usepackage{venndiagram}
\usetikzlibrary{positioning,shapes,fit,arrows}
\usepackage{multirow}
\usepackage{multicol}

\newtheorem{theorem}{Theorem}[section]
\newtheorem{proposition}[theorem]{Proposition}
\newtheorem{corollary}[theorem]{Corollary}
\newtheorem{lemma}[theorem]{Lemma}
\newtheorem{exmp}[theorem]{Example}
\newtheorem{definition}[theorem]{Definition}
\newtheorem{remark}[theorem]{Remark}

\newcommand{\abf}{\mathbf{a}}
\newcommand{\I}{\mathrm{I}}
\newcommand{\M}{\mathrm{M}}
\newcommand{\PM}{\mathrm{PM}}
\newcommand{\FI}{\mathrm{FI}}
\newcommand{\Id}{\mathrm{Id}}
\newcommand{\Ker}{\mathrm{Ker}}
\newcommand{\Rad}{\mathrm{Rad}}
\newcommand{\Md}{\mathbb{M}_d}
\newcommand{\HomM}{\mathrm{Hom}(M,N)}

\def\n{\mathbb{N}}
\def\z{\mathbb{Z}}
\def\q{\mathbb{Q}}
\def\r{\mathbb{R}}

\newcommand{\NVL}{\mathsf{NVL}}
\newcommand{\AVL}{\mathsf{AVL}}
\newcommand{\CIL}{\mathsf{CIL}}
\def\inv{^{-1}}

\begin{document}

\title[]{Strolling through common meadows}

\author[]{João Dias}
\author[]{Bruno Dinis}

\address[Bruno Dinis]{Departamento de Matemática, Universidade de Évora}
\email{bruno.dinis@uevora.pt}

\address[João Dias]{Departamento de Matemática, Universidade de Évora}
\email{joao.miguel.dias@uevora.pt}

\subjclass[2010]{16U90, 06B15,16B50}

\keywords{Common meadows, unital commutative rings, directed lattices, category of meadows}

\begin{abstract}
    This paper establishes a connection between rings, lattices and common meadows.
    Meadows are commutative and associative algebraic structures with two operations (addition and multiplication) with additive and multiplicative identities and for which inverses are total. Common meadows are meadows that introduce, as the inverse of zero, an error term $\abf$ which is absorbent for addition.
    
    We show that common meadows are unions of rings which are ordered by a partial order that defines a lattice. These results allow us to extend some classical algebraic constructions to the setting of common meadows.  We also briefly consider common meadows from a categorical perspective.
\end{abstract}
\maketitle
\section{Introduction}

In classical mathematics it doesn't make sense to divide by zero, as some false statements can be immediately derived. There are countless "wrong proofs" in which the wrong part is precisely a (more or less conspicuous) division by zero. One way to overcome this limitation is to give up on some usual algebraic properties, e.g.\ the cancellation law, and allowing for $0 \cdot x$ to be different from $0$. 

In this paper we use this idea, which can be implemented by the notion of common meadow, and establish a connection between common meadows and the well-known structures of rings and lattices. We believe that these connections are a first step into a fruitful line of research that starts by investigating the algebraic properties of common meadows, as we will do below. 

Meadows were introduced by Bergstra and Tucker in \cite{10.1145/1219092.1219095} as algebraic structures, given by equational theories, where it makes sense to divide by zero. To be more specific, a meadow is a sort of commutative ring with a multiplicative identity element and a total multiplicative inverse operation. Two of the main classes of meadows are involutive meadows, in which the inverse of zero is zero, and common meadows (introduced in \cite{Bergstra2015}), in which the inverse of zero is a term $\abf$ which is maximal, in the sense that $x+\abf=\abf$, for every element $x$ in the meadow. In common meadows $0 \cdot x$ doesn't have to be equal to $0$. 
  
  Even though meadows were only recently introduced, the subject is revealing to be of interest, mostly as datatypes given by equational axiomatizations (see  e.g.\ \cite{Bergstra2008,10.1145/1219092.1219095,BP(20),bergstra2020arithmetical}) allowing for simple term rewriting systems which are easier to automate in formal reasoning  \cite{bergstra2020arithmetical,bergstra2023axioms}. More recently, Bottazzi and the second author \cite{Dinis_Bottazzi} found a connection with nonstandard analysis which provides new models for both involutive and common meadows.
  Our main motivation for writing this paper is of a different nature though, as we want to study common meadows as algebraic structures on their own right.


 In Section \ref{S:PreMeadows}, we introduce the notion of \emph{pre-meadow} as an algebraic structure satisfying the properties of common meadows except the ones related with the inverse. We prove that every pre-meadow is a disjoint union of rings. As it turns out, under certain conditions, pre-meadows can be extended to common meadows in an essentially unique way. A natural order relation on meadows allows us to show, in Section \ref{S:DirectedLattices}, that every common meadow $M$ is in relation with a particular kind of lattice of rings, which we call \emph{directed lattice}, and, conversely,  that one can associate a common meadow to a certain class of directed lattices of rings over a lattice. These results allow us to extend the usual algebraic notions studied in rings to the setting of common meadows. Namely, we consider homomorphisms, ideals, kernels and isomorphisms. This is done in Section \ref{S:Algebraicconstructions}. In Section \ref{S:Alternative}, and profiting from the relations with lattices and rings unveiled in the previous sections, we consider alternative classes of common meadows, i.e.\ common meadows that satisfy some additional properties. The three 
 properties that we consider are exactly the ones considered in \cite[Section~2.3]{Bergstra2015}. We give what is essentially an alternative proof for \cite[Proposition 3.1.1]{Bergstra2015} and give a characterization for one of these conditions in terms of maximal ideals of a common meadow.
Finally, in Section \ref{S:Categories} we explore the possibility of viewing meadows through a categorical perspective.

\section{Pre-meadows}\label{S:PreMeadows}
In this section we introduce the notion of pre-meadow as a structure that satisfies the axioms of common meadows (see \cite{Bergstra2015}) not related with inverses, and show that every pre-meadow is a disjoint union of rings. Our notion of pre-meadow should not be mistaken with the notion of premeadow in \cite{bergstra2019division}.

\begin{definition}\label{D:PreMeadow}
A \emph{pre-meadow} is a structure $(P,+,-,\cdot)$ satisfying the following equations

\begin{tabular}{l c c r}
\\[-2mm]
$(\PM_1)$ &\qquad \qquad \qquad&$(x+y)+z=x+(y+z) $  & \qquad \qquad \\[2mm]
$(\PM_2)$ && $x+y=y+x $ \\[2mm]
$(\PM_3)$ && $x+0=x$  \\[2mm]
$(\PM_4)$ && $x+ (-x)=0 \cdot x$ \\[2mm]
$(\PM_5)$ && $(x \cdot y) \cdot z=x \cdot (y \cdot z)$  \\[2mm]
$(\PM_6)$ && $x \cdot y=y \cdot x $ \\[2mm]
$(\PM_7)$ && $1 \cdot x=x$ \\[2mm]
$(\PM_8)$ && $x \cdot (y+z)= x \cdot y + x \cdot z$ \\[2mm]
$(\PM_9)$ && $-(-x)=x$ \\[2mm]
$(\PM_{10})$ && $0\cdot(x+y)=0\cdot x \cdot y$ \\[2mm]
\end{tabular}
\end{definition}

A pre-meadow is then a structure such that both $(P,+)$ and $(P,\cdot)$ are monoids, linked by the distributive law ($\PM_8$) and possessing a sort of generalized zero $0\cdot x$, as given by $(\PM_4)$, for which the operations of addition and multiplication coincide, as in $(\PM_{10})$.
It is easy to see that if $R$ is a ring then it is also a pre-meadow with the property $0\cdot x = 0$, for all $x\in R$.

The following proposition covers some basic results on pre-meadows. We will use these properties throughout the paper without explicit mention. The proof is essentially the same as in \cite[Proposition~2.2.1]{Bergstra2015}, so we shall omit it.

\begin{proposition}\label{P:pre-meadowres}
Let $P$ be a pre-meadow. Then
\begin{enumerate}
    \item  $0 \cdot 0 = 0$ 
    \item $-0 = 0$
    \item $0 \cdot x = 0 \cdot (-x)$
    \item $-(x \cdot y) = x \cdot (-y)$
    \item $(-x) \cdot (-y) = x \cdot y$
    \item $(-1) \cdot x = -x $.
    \item $0\cdot (x \cdot x) = 0 \cdot x$
\end{enumerate}
\end{proposition}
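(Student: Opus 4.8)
The plan is to derive the seven identities directly from the ten axioms, in an order that respects their internal dependencies, rather than appealing to any structural decomposition. The recurring obstacle is that a pre-meadow has \emph{no cancellation law}: from $a+b=a+c$ one cannot conclude $b=c$, because the error term $0\cdot a$ need not vanish. The strategy is therefore to first pin down the behaviour of the generalized zeros $0\cdot x$, and then to cancel only in the controlled situations where the surviving error term gets absorbed.

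The linchpin is item (1). First I would note that $(\PM_7)$ gives $1\cdot 0=0$, whence $0\cdot 1=0$ by commutativity $(\PM_6)$. Next, specializing $(\PM_{10})$ at $y=0$ and using $(\PM_3)$ yields $0\cdot x=(0\cdot x)\cdot 0$ for every $x$. Setting $x=1$ collapses the left-hand side to $0\cdot 1=0$ and the right-hand side to $0\cdot 0$, so $0\cdot 0=0$. Item (2) is then immediate: by $(\PM_4)$ with $x=0$ together with item (1) we get $0+(-0)=0\cdot 0=0$, and $(\PM_2)$ with $(\PM_3)$ turn this into $-0=0$. Item (3) is equally short, since $(\PM_4)$ gives both $x+(-x)=0\cdot x$ and $(-x)+x=0\cdot(-x)$ (using $(\PM_9)$), while the two left-hand sides agree by commutativity.

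The heart of the argument is an absorption lemma: for every $w$ one has $w+0\cdot w=w$. This follows from distributivity $(\PM_8)$ applied to $w\cdot(1+0)$, since $1+0=1$ and $w\cdot 1=w$. With this in hand I can prove item (4) in spite of the missing cancellation. The idea is to compute $x\cdot y+x\cdot(-y)$ via $(\PM_8)$ and $(\PM_4)$, obtaining $0\cdot(xy)$, which is also $xy+(-(xy))$ by $(\PM_4)$; equating these two expressions, adding $-(xy)$ to both, and rearranging with $(\PM_1)$ and $(\PM_2)$ leaves $x\cdot(-y)+0\cdot(xy)$ on one side and $-(xy)+0\cdot(xy)$ on the other. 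The point is that $0\cdot(xy)$ coincides with $0\cdot\bigl(x(-y)\bigr)$ and with $0\cdot\bigl(-(xy)\bigr)$ (each reduces to $0\cdot(xy)$ via item (3) together with associativity and commutativity), so the absorption lemma erases both error terms and forces $x\cdot(-y)=-(xy)$.

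The remaining items are consequences of (4). For item (5), one application of (4) gives $(-x)(-y)=-\bigl((-x)\cdot y\bigr)$, a second gives $(-x)\cdot y=-(xy)$, and $(\PM_9)$ closes it to $(-x)(-y)=xy$. For item (6), commutativity and (4) give $(-1)x=x(-1)=-(x\cdot 1)=-x$. For item (7) I would instantiate $(\PM_{10})$ at $y=x$ and use associativity $(\PM_5)$ to rewrite the right-hand side as $0\cdot(x\cdot x)$, while $(\PM_8)$ rewrites the left-hand side $0\cdot(x+x)$ as $0\cdot x+0\cdot x$; the proof then closes by the idempotency $0\cdot x+0\cdot x=0\cdot x$, which itself comes from the absorption lemma applied to $w=0\cdot x$ once one checks $0\cdot(0\cdot x)=0\cdot x$ using item (1). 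The main obstacle throughout is precisely the absence of cancellation, and the absorption lemma $w+0\cdot w=w$ is the device that tames it; once that lemma is available, every identity reduces to routine rewriting.
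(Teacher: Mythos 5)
Your proposal is correct: each of the seven identities is derived soundly from $(\PM_1)$--$(\PM_{10})$ alone, the absorption law $w+0\cdot w=w$ (itself a consequence of $(\PM_3)$, $(\PM_7)$ and $(\PM_8)$) legitimately substitutes for the missing cancellation law in item (4), and the order of derivation respects all the dependencies, with the error terms $0\cdot(x\cdot y)$, $0\cdot(x\cdot(-y))$ and $0\cdot(-(x\cdot y))$ correctly identified via item (3) before being absorbed. The paper gives no argument of its own---it omits the proof, deferring to Proposition~2.2.1 of Bergstra--Ponse---and your equational derivation is exactly the kind of proof that reference supplies, confirming in passing the paper's implicit claim that the inverse axioms are never needed for these identities.
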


In \cite{DinisBerg(11)} the notion of assembly was introduced as a sort of group with individualized zeros. The conditions were then slightly adapted in \cite{DDT} in order to show that a semigroup is a band of groups if and only if it is an assembly. We recall the latter definition of assembly below and show that the generalized zero postulated by $(\PM_4)$ allows to establish a connection between pre-meadows and assemblies. 

\begin{definition} \label{defassembley}
A nonempty semigroup $(S, \cdot)$ is called an \emph{assembly} if the following hold

\begin{enumerate}
\item[$(A_1)$] $\forall x\, \exists e = e(x)\, (xe=ex=x  \wedge \forall f\ (xf=fx=x\to ef=fe=e))$
\item[$(A_2)$]  $\forall x\, \exists s=s(x)\, (xs=sx=e(x)\wedge e(s)=e(x))$
\item [$(A_3)$] $\forall x\,\forall y\,(e(xy)=e(x)e(y))$.
\end{enumerate}

If condition $(A_3)$ is replaced by 
\begin{equation}\tag{$A_3'$}
\forall x \, \forall y \left(e(xy)=e(x)\lor e(xy)=e(y)\right)   
\end{equation}
we say that the resulting structure is a
\emph{strong assembly}.

To make explicit the functions that exist by conditions $(A_1)$ and $(A_2)$ we write $(S,\cdot,e,s)$ instead of $(S,\cdot)$. 
\end{definition}  

The functional notation $e(x)$ and $s(x)$ used above is justified by the fact that the elements $e$ and $s$  are unique (see the discussion after Definition~4.9 in \cite{DinisBerg(11)}).


\begin{proposition}\label{P:Assembly}
    Let $P$ be a pre-meadow. Define $e(x)=0\cdot x$ and $s(x)=-x$, for all $x\in P$. Then the structure $(P,+,e,s)$ is an assembly.
\end{proposition}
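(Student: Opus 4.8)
The plan is to verify each of the three assembly axioms $(A_1)$, $(A_2)$, $(A_3)$ directly, using the identification $e(x) = 0 \cdot x$ and $s(x) = -x$, and drawing on the basic arithmetic already collected in Proposition~\ref{P:pre-meadowres}. Since an assembly is required to be a semigroup under the relevant operation, I should first note that here the operation playing the role of the semigroup multiplication is $+$, which is associative by $(\PM_1)$ and has $P$ nonempty, so $(P,+)$ is indeed a (commutative) semigroup; the task is then to confirm that $e$ and $s$ behave as the axioms demand with respect to $+$.

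**First I would** check $(A_1)$. Given $x$, I claim $e(x) = 0 \cdot x$ is the required idempotent neutral element: using $(\PM_4)$ and $(\PM_3)$ one computes $x + e(x) = x + (x + (-x)) = (x + (-x)) + x$, and reducing via $x + (-x) = 0 \cdot x$ together with the distributive and commutative laws should yield $x + e(x) = x$. For the uniqueness/minimality clause in $(A_1)$ — that any $f$ with $x + f = x$ satisfies $e(x) + f = e(x)$ — I would exploit that $f$ absorbs into $x$ and that $e(x) = 0\cdot x$ is itself of the form $x + (-x)$, so adding $f$ to $e(x)$ and using $x + f = x$ collapses the extra term. **Next I would** verify $(A_2)$: the candidate inverse $s(x) = -x$ satisfies $x + s(x) = x + (-x) = 0 \cdot x = e(x)$ by $(\PM_4)$, which is exactly the first requirement; the second requirement $e(s(x)) = e(x)$ reads $0 \cdot (-x) = 0 \cdot x$, which is precisely part~(3) of Proposition~\ref{P:pre-meadowres}.

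**The main obstacle** I expect is $(A_3)$, the multiplicativity of $e$, which here reads $e(x + y) = e(x) + e(y)$, i.e.\ $0 \cdot (x + y) = 0 \cdot x + 0 \cdot y$. This is \emph{not} immediate, because axiom $(\PM_{10})$ instead gives $0 \cdot (x+y) = 0 \cdot x \cdot y$, a \emph{product} rather than a sum. So the crux is to reconcile these: I would need to show $0 \cdot x \cdot y = 0 \cdot x + 0 \cdot y$. The natural route is to compute $0 \cdot x + 0 \cdot y$ by writing each summand using idempotency of the generalized zeros (Proposition~\ref{P:pre-meadowres}(1),(7)) and the distributive law $(\PM_8)$, for instance expanding $0 \cdot x \cdot y = 0 \cdot x \cdot y + 0$ and manipulating with $0\cdot 0 = 0$, or by adding $0\cdot x$ and $0 \cdot y$ and factoring. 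Establishing this identity $0\cdot x + 0 \cdot y = 0 \cdot x \cdot y$ is the technical heart of the argument; once it is in hand, $(A_3)$ follows by combining it with $(\PM_{10})$, and the proposition is complete.
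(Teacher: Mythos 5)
Your overall strategy (verify $(A_1)$, $(A_2)$, $(A_3)$ directly with $e(x)=0\cdot x$, $s(x)=-x$) is exactly the paper's, and your treatment of $(A_1)$ and $(A_2)$ matches its proof: for the uniqueness clause of $(A_1)$ the paper computes $0\cdot x + y = x+(-x)+y = x+(-x) = 0\cdot x$ whenever $y+x=x$, and $(A_2)$ consists of the same two identities you give. One small remark on the neutrality claim: rewriting $x+e(x)$ as $(x+(-x))+x$ just returns you to $0\cdot x + x$, so the step you actually need is $0\cdot x + x = (0+1)\cdot x = 1\cdot x = x$, an application of $(\PM_8)$, $(\PM_3)$ and $(\PM_7)$; you only gesture at this, but the idea is there.

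The genuine gap is in $(A_3)$, which you single out as ``the technical heart'' and then leave unproven. In fact there is nothing to reconcile: since the semigroup operation of this assembly is $+$, axiom $(A_3)$ reads $0\cdot(x+y)=0\cdot x+0\cdot y$, and this is \emph{literally an instance of the distributive law} $(\PM_8)$ with the outer factor equal to $0$. That one line is the paper's entire proof of $(A_3)$. Axiom $(\PM_{10})$ is a red herring here: it is not needed for $(A_3)$ at all; rather, combining $(\PM_{10})$ with this instance of $(\PM_8)$ is precisely what yields the identity $0\cdot x + 0\cdot y = 0\cdot x\cdot y$ (that sum and product coincide on $0\cdot P$, cf.\ Proposition~\ref{P:0Dmonoid}) --- the very identity you propose to establish first by unspecified manipulations. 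So your plan is not wrong, but as stated it is circular-or-incomplete: the most direct route to your intermediate identity is the one-line distributivity observation you flagged as missing, and without it your verification of $(A_3)$ does not go through.
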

\begin{proof}
    For $x\in P$, we have $e(x)+x=0\cdot x + x = x,$
    and if $y+x = x$ we have 
    $$0\cdot x +y = x+(-x)+y = x + (-x) = 0\cdot x .  $$ 
    Hence $(A_1)$ is satisfied.

    For $(A_2)$, note that $$x+s(x)=x+(-x)=0\cdot x=e(x)$$ and $$e(s(x))=e(-x)=0\cdot (-x)=0\cdot x=e(x).$$

    Finally, for $(A_3)$ we have $e(x+y)=0\cdot (x+y)=0 \cdot x + 0\cdot y =e(x)+e(y)$.
\end{proof}

In \cite{DDT}, it was shown that assemblies are disjoint unions of groups. A related result can be shown for meadows. Indeed, we show that a pre-meadow is always a disjoint union of rings. 

Recall that a \emph{unital commutative ring} is a commutative ring with identity different from zero, and the \emph{zero ring} is the unique ring with only one element.
\begin{theorem}\label{T:union}
Every pre-meadow $P$ is a disjoint union of unital commutative rings or multiple copies of the zero ring, of the form $$P_z:=\{x\in P\mid 0\cdot x = z  \},$$
with zero $z$ and unit $1+z$, where $z\in 0\cdot P$. 
\end{theorem}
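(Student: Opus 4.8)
The plan is to exhibit $P$ as the disjoint union of the blocks $P_z$ indexed by $z \in 0\cdot P$, and then to equip each block with a commutative ring structure. I would begin by observing that the elements of $0\cdot P$ are exactly the fixed points of the map $x \mapsto 0\cdot x$: if $z = 0\cdot x$ then $0\cdot z = (0\cdot 0)\cdot x = 0\cdot x = z$ by associativity $(\PM_5)$ and part (1) of Proposition~\ref{P:pre-meadowres}. Consequently $z \in P_z$, every element $x$ lies in the block $P_{0\cdot x}$, and two blocks $P_z$, $P_{z'}$ are either equal or disjoint according as $z = z'$ or not. This already yields the decomposition $P = \bigcup_{z\in 0\cdot P} P_z$ into disjoint blocks; it is the additive counterpart, via Proposition~\ref{P:Assembly}, of the decomposition of an assembly into groups proved in \cite{DDT}.

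Next I would record the two identities that make $z$ behave like a zero inside its block. For $x \in P_z$ one has $z + z = 0\cdot x + 0\cdot x = 0\cdot(x+x) = 0\cdot(x\cdot x) = 0\cdot x = z$, using $(\PM_8)$, $(\PM_{10})$, $(\PM_5)$ and part (7) of Proposition~\ref{P:pre-meadowres}; similarly $z\cdot x = (0\cdot x)\cdot x = 0\cdot(x\cdot x) = 0\cdot x = z$. From these it is routine to check that $P_z$ is closed under the three operations: for $x,y\in P_z$ one computes $0\cdot(x+y) = 0\cdot x + 0\cdot y = z+z = z$, and $0\cdot(x\cdot y) = (0\cdot x)\cdot y = z\cdot y = z$, while $0\cdot(-x) = 0\cdot x = z$ by part (3) of Proposition~\ref{P:pre-meadowres}. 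Hence $x+y$, $x\cdot y$ and $-x$ all remain in $P_z$.

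I would then verify the ring axioms on $P_z$. Associativity, commutativity and distributivity for $+$ and $\cdot$ are inherited from $P$, so the substance lies in identifying the two identities. Here $z$ is the additive identity, since $z + x = 0\cdot x + x = x$ (this is precisely condition $(A_1)$ established in Proposition~\ref{P:Assembly}), and $-x$ is the additive inverse of $x$ because $x + (-x) = 0\cdot x = z$ by $(\PM_4)$; thus $(P_z,+)$ is an abelian group. For multiplication the unit is $1+z$: it lies in $P_z$ because $0\cdot(1+z) = 0\cdot 1 + 0\cdot z = 0 + z = z$, and for any $x \in P_z$ one has $(1+z)\cdot x = 1\cdot x + z\cdot x = x + z = x$ using $(\PM_7)$, $(\PM_8)$ and the identity $z\cdot x = z$ above. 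Hence each $P_z$ is a commutative ring with zero $z$ and unit $1+z$.

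Finally I would settle the dichotomy in the statement: the unit $1+z$ coincides with the zero $z$ exactly when $P_z$ collapses to a single element, in which case $P_z$ is the zero ring; otherwise $1+z \neq z$ and $P_z$ is a unital commutative ring in the sense fixed before the theorem. I expect the only genuinely delicate points to be the correct guess of the unit $1+z$ (rather than $1$, which need not lie in $P_z$) and the repeated use of $(\PM_{10})$ together with part (7) of Proposition~\ref{P:pre-meadowres} to force $z$ to absorb both additively and multiplicatively within its block; once these identities are secured, the remaining verifications are mechanical.
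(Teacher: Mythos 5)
Your proof is correct and follows essentially the same route as the paper's: the same decomposition into blocks $P_z$, the same zero $z$ and unit $1+z$, with the ring axioms verified from the pre-meadow axioms and Proposition~\ref{P:pre-meadowres}. The only differences are cosmetic — you factor the computations through the auxiliary identities $z+z=z$ and $z\cdot x=z$ and you spell out the zero-ring dichotomy, which the paper leaves implicit.
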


\begin{proof}
		Let $z\in 0\cdot P$. Clearly, both addition and multiplication in $P_z$ are associative and commutative, and multiplication is distributive with respect to addition because these properties hold in $P$. Clearly $P_z$ is closed under addition and multiplication. Let $x\in P_z$. Then 
            \begin{itemize}
                \item $x+z=1\cdot x+0\cdot x=(1+0)\cdot x = 1\cdot x=x$
                \item $x+(-x)=0\cdot x = z$
                \item $0\cdot (1+z)=0+0\cdot z = z$
                \item $x\cdot (1+z)=x+x\cdot z=x+x\cdot (0\cdot x)=x+0\cdot x\cdot x = x+0\cdot x=x$.
            \end{itemize}
            Hence, for all $z\in 0\cdot P$, the set $P_z$ is a unital commutative ring, with zero equal to $z$ and unit equal to $1+z$. Taking an element $x\in P$, then $x\in P_{0\cdot x}$, and so $P=\bigcup_{z\in 0\cdot P} P_z$. It is easy to see that the union is disjoint. 
\end{proof}

    One easily shows that $0\cdot P$ is closed under sums, since $0\cdot x+0\cdot y=0\cdot(x+y)$, by $(\PM_8)$. We recall that an element $x$ is said to be \emph{idempotent} if and only if $x\cdot x= x$. The following proposition then easily follows from \cite[Proposition 2.2.1]{Bergstra2015} and Proposition~\ref{P:pre-meadowres}.
    
    \begin{proposition}\label{P:0Dmonoid}
        Let $P$ be a pre-meadow, then $(0\cdot P,+)=(0\cdot P,\cdot)$ is a monoid where all elements are idempotent. 
    \end{proposition}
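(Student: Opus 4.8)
The plan is to establish three things in turn: that the operations $+$ and $\cdot$ restrict to the \emph{same} binary operation on the set $0\cdot P$, that this common operation makes $0\cdot P$ a commutative monoid, and that every element of $0\cdot P$ is idempotent.

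First I would verify the key identity, namely that $+$ and $\cdot$ agree on $0\cdot P$. Given two elements $0\cdot x$ and $0\cdot y$, distributivity $(\PM_8)$ gives $0\cdot x + 0\cdot y = 0\cdot(x+y)$, and then $(\PM_{10})$ rewrites this as $0\cdot x\cdot y$. On the other hand, using commutativity and associativity of multiplication together with $0\cdot 0 = 0$ (Proposition~\ref{P:pre-meadowres}(1)), one computes $(0\cdot x)\cdot(0\cdot y) = 0\cdot 0\cdot x\cdot y = 0\cdot x\cdot y$. Comparing the two expressions shows $0\cdot x + 0\cdot y = (0\cdot x)\cdot(0\cdot y)$, so the two operations coincide on $0\cdot P$. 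This is the heart of the statement and the only place where $(\PM_{10})$ enters in an essential way.

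Next I would check the monoid axioms for this common operation. Closure under $+$ is the remark preceding the statement (since $0\cdot x + 0\cdot y = 0\cdot(x+y)\in 0\cdot P$), and by the previous paragraph this is equally closure under $\cdot$. Associativity and commutativity are inherited directly from $(\PM_1)$ and $(\PM_2)$. For the identity element, note that $0 = 0\cdot 0\in 0\cdot P$ by Proposition~\ref{P:pre-meadowres}(1), and $0\cdot x + 0 = 0\cdot x$ by $(\PM_3)$, so $0$ is a two-sided identity for the common operation.

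Finally, idempotency follows from Proposition~\ref{P:pre-meadowres}(7): for any $0\cdot x$ we have $(0\cdot x)\cdot(0\cdot x) = 0\cdot 0\cdot x\cdot x = 0\cdot(x\cdot x) = 0\cdot x$, the last equality being exactly $0\cdot(x\cdot x)=0\cdot x$. Since $+$ and $\cdot$ coincide on $0\cdot P$, this simultaneously yields idempotency for both operations. I do not expect any genuine obstacle here; the only point requiring care is bookkeeping the use of $(\PM_{10})$ when showing the two operations agree, after which everything reduces to routine applications of the pre-meadow axioms and Proposition~\ref{P:pre-meadowres}.
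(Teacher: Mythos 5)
Your proof is correct and follows essentially the same route as the paper, which simply states that the result "easily follows" from the basic identities of Proposition~\ref{P:pre-meadowres} together with the closure observation $0\cdot x + 0\cdot y = 0\cdot(x+y)$; your argument just writes out those details, with the key chain $0\cdot x + 0\cdot y = 0\cdot(x+y) = 0\cdot x\cdot y = (0\cdot x)\cdot(0\cdot y)$ doing exactly the work the paper leaves implicit.
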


    We now define a partial order relation in pre-meadows which, as it turns out, is just the usual partial order defined in idempotent semigroups. 

    \begin{definition}\label{D:order}
  Let $P$ be a pre-meadow and $z,z'\in 0\cdot P$. We say that $z$ is \emph{less than or     equal to} $z'$, and write $z\leq z'$, if and only if $z\cdot z' = z$.  
    \end{definition}

     \begin{proposition}\label{P:order}
           The order relation $\leq$ 
           defines a semi-lattice, with maximum $0$.
        \end{proposition}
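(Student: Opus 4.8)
The plan is to exploit Proposition~\ref{P:0Dmonoid}, which tells us that $(0\cdot P,\cdot)$ is a commutative monoid in which every element is idempotent and in which multiplication coincides with addition. In particular its identity element is $0$: indeed $0=0\cdot 0\in 0\cdot P$, and for every $z\in 0\cdot P$ we have $z\cdot 0=z+0=z$ (using that $+$ and $\cdot$ agree on $0\cdot P$ together with $(\PM_3)$). Once this is in place, the statement becomes the classical fact that a commutative idempotent monoid carries a natural meet-semilattice order, and it only remains to run through the standard verifications.

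First I would check that $\leq$ is a partial order. Reflexivity is immediate from idempotency, $z\cdot z=z$. Antisymmetry follows from commutativity: if $z\cdot z'=z$ and $z'\cdot z=z'$, then $z=z'$. Transitivity uses associativity together with the defining relation: if $z\cdot z'=z$ and $z'\cdot z''=z'$, then $z\cdot z''=(z\cdot z')\cdot z''=z\cdot(z'\cdot z'')=z\cdot z'=z$, so $z\leq z''$.

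Next I would identify the infimum of two elements $z,z'$ with their product $z\cdot z'$. Idempotency gives $(z\cdot z')\cdot z=z\cdot z'$ and $(z\cdot z')\cdot z'=z\cdot z'$, so $z\cdot z'$ is a lower bound for both $z$ and $z'$. If $w\leq z$ and $w\leq z'$, then $w\cdot(z\cdot z')=(w\cdot z)\cdot z'=w\cdot z'=w$, so $w\leq z\cdot z'$; hence $z\cdot z'$ is the greatest lower bound. This shows every pair has a meet, so $\leq$ defines a semi-lattice. Finally, since $0$ is the identity we have $z\cdot 0=z$ for all $z\in 0\cdot P$, which is exactly $z\leq 0$, so $0$ is the maximum.

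The computations are all routine; the only point requiring care is the bookkeeping at the start, namely confirming that on $0\cdot P$ the additive and multiplicative structures genuinely agree and that $0$ plays the role of the identity. This is what lets us recognize the situation as the standard semilattice-from-idempotent-monoid construction, and it also fixes the (perhaps counterintuitive) orientation in which the product serves as the meet while $0$ sits at the top.
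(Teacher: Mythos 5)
Your proof is correct and follows the same route as the paper: invoke Proposition~\ref{P:0Dmonoid} to see that $0\cdot P$ is a commutative idempotent monoid, recognize $\leq$ as the standard order on such a semigroup (with the product as meet), and check $z\cdot 0=z$ to place $0$ at the top. The only difference is that you spell out the routine verifications (reflexivity, antisymmetry, transitivity, and the meet property) that the paper leaves implicit by citing the classical fact.
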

        \begin{proof}
            By Proposition \ref{P:0Dmonoid} we have that $0\cdot P$ is an idempotent commutative semigroup for the product.
            Then the order relation $\leq$ is just the usual partial order defined in idempotent semigroups.

            Let $z\in P$. Then $(0\cdot z)\cdot 0 = 0\cdot z$, which implies that $0$ is the maximum.   
        \end{proof} 

    \begin{definition}\label{D:PreMeadowA}
           We say that $M$ is a \emph{pre-meadow with $\abf$} if $M$ is a pre-meadow such that
    \begin{enumerate}
        \item There exists a unique $z\in 0\cdot M$ such that $|M_z|=1$. This element will be denoted by $\abf$
        \item For all $x\in M$ one has $x+\abf=\abf$
    \end{enumerate}
    \end{definition}

We would like to point out that our notion of pre-meadow with $\abf$ is related with the notion of \emph{weak commutative ring with $\bot$} in \cite{BergstraTucker23}.

\begin{exmp}\label{firstexmp}
    \begin{enumerate}
     \item A unital commutative ring $R$ is always a pre-meadow, however it is not a pre-meadow with $\abf$, since $0\cdot R=\{0\}$ and $$M_0:=\{x\in M\mid 0\cdot x = 0\}=R.$$
     
     \item Given a unital commutative ring $R$, the set $M=R\sqcup \{\abf\}$ is a pre-meadow with $\abf$, where for $x\in M$ we define $x +\abf=\abf$ and $x\cdot\abf=\abf$, and the product and sum of elements in $R$ coincides with the operations in $R$.
        
    \item Consider the set $M=\z\times\{0\}\cup \q\times\{1\}$. Formally, this is just the disjoint union of $\z$ and $\q$. We define the sum and product in $M$ in the following way. Let $(m,i),(m',j)\in M$, then

    \begin{itemize}
        \item $(m,i)+(m',j)=(m+m',\max\{i,j\})$
        \item $(m,i)\cdot (m',j)=(m\cdot m',\max\{i,j\})$
        \item $-(m,i)=(-m,i)$
    \end{itemize}

    It is straightforward to verify that, with the operations above, $M$ is a pre-meadow, but not a pre-meadow with $\abf$. We can turn it into a pre-meadow with $\abf$ by adding the set $\{\abf\}$ and arguing as in example (2) above. So $M'=M\sqcup \{\abf\}$ is a pre-meadow with $\abf$. 
    \end{enumerate}
\end{exmp}

 The following result shows that a pre-meadow with $\abf$ can be turned into a common meadow as long as a certain condition holds. This condition is necessary, as illustrated by Example \ref{Example_revisited}, and amounts to ensure that there is no ambiguity in the choice of the inverses.
    
\begin{theorem}\label{T:Existenceofinverse}
    Let $M$ be a pre-meadow with $\abf$. There exists a unique function $(\cdot)\inv:M\rightarrow M$ such that, for all $x,y\in M$, 
    
        \begin{enumerate}
            \item $x\cdot x\inv = 1+ 0\cdot x\inv$
            \item $(1+0\cdot x)\inv =1+0\cdot x$
            \item $(x\cdot y)\inv=x\inv \cdot y\inv$
            \item $0\inv =\abf$.
        \end{enumerate}

        if and only if for all $x\in M$ the set
         $I_x=\{0\cdot z \in 0\cdot M\mid x\cdot z=1+0\cdot z\} $
        has a unique maximal element.
\end{theorem}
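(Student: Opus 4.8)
My plan is to work with the decomposition of $M$ into the rings $M_z=\{x\in M\mid 0\cdot x=z\}$ from Theorem~\ref{T:union} and to read the set $I_x$ off inside the semilattice $(0\cdot M,\le)$ of Proposition~\ref{P:order}. I would first record the identity $0\cdot(xy)=(0\cdot x)(0\cdot y)$, which follows by combining $(\PM_{10})$, $(\PM_8)$ and the fact (Proposition~\ref{P:0Dmonoid}) that addition and multiplication agree and are idempotent on $0\cdot M$; and I would note that for $w\le 0\cdot x$ the assignment $\phi_w(u)=u\cdot(1+w)$ is a unital ring homomorphism $M_{0\cdot x}\to M_w$. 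Applying $0\cdot(-)$ to the equation $x\cdot z=1+0\cdot z$ shows that every $w\in I_x$ satisfies $w\le 0\cdot x$, and writing $z=z(1+w)$ identifies such a $z$ with the multiplicative inverse of $\phi_w(x)$ in the ring $M_w$; hence \emph{the witness $z$ is unique for each $w\in I_x$, and $w\in I_x$ if and only if $\phi_w(x)$ is invertible in $M_w$}. Since ring homomorphisms preserve units and the maps $\phi$ compose, $I_x$ is downward closed in $(0\cdot M,\le)$, so its unique maximal element, which I denote $w_x$, is the greatest element of $I_x$.

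For the implication $(\Leftarrow)$ I would define $x\inv$ to be the unique element of $M_{w_x}$ witnessing $w_x\in I_x$ and then verify the four equations. Equation $(1)$ holds by construction. For $(4)$ I would compute $I_0=\{\abf\}$: the condition defining $I_0$ reduces to $w=1+w$, i.e.\ the zero and the unit of $M_w$ coincide, which by Definition~\ref{D:PreMeadowA} happens exactly for $w=\abf$; hence $w_0=\abf$ and $0\inv=\abf$. For $(2)$ I would compute $I_{1+0\cdot x}=\{w\mid w\le 0\cdot x\}$, using $(1+z)(1+w)=1+w$ whenever $w\le z$; its greatest element is $0\cdot x$ and the corresponding witness is the idempotent unit $1+0\cdot x$, giving $(1+0\cdot x)\inv=1+0\cdot x$.

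The substantive equation is $(3)$. Writing $w_x=\max I_x$ and $w_y=\max I_y$, the product $x\inv y\inv$ lies in $M_{w_xw_y}$ and satisfies $(xy)(x\inv y\inv)=(1+w_x)(1+w_y)=1+w_xw_y$, so $w_x\wedge w_y=w_xw_y\in I_{xy}$ and thus $w_xw_y\le w_{xy}$. Conversely, if $t$ witnesses the greatest element $w_{xy}$ of $I_{xy}$, then $yt$ witnesses $w_{xy}\in I_x$ and $xt$ witnesses $w_{xy}\in I_y$, whence $w_{xy}\le w_x\wedge w_y$. Therefore $w_{xy}=w_xw_y$, and uniqueness of the ring inverse in $M_{w_{xy}}$ forces $(xy)\inv=x\inv y\inv$. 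The implication $(\Rightarrow)$ and uniqueness I would treat simultaneously: given any $f$ obeying $(1)$--$(4)$, equation $(1)$ puts $0\cdot f(x)$ in $I_x$, and for an arbitrary $w'\in I_x$ with witness $z'$ equations $(2)$ and $(3)$ give
\[ f(x)\,f(z')=f(xz')=f(1+w')=1+w'. \]
Applying $0\cdot(-)$ yields $(0\cdot f(x))(0\cdot f(z'))=w'$, i.e.\ $w'\le 0\cdot f(x)$; hence $0\cdot f(x)$ is the greatest element of $I_x$, which is exactly the conclusion of $(\Rightarrow)$. Finally $0\cdot f(x)=w_x$ together with $(1)$ makes $f(x)$ the inverse of $\phi_{w_x}(x)$ in $M_{w_x}$, which is unique, so $f(x)=x\inv$.

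I expect the principal difficulties to be two. The first is the verification of $(3)$ for the constructed function, which hinges on the semilattice identity $w_{xy}=w_x\wedge w_y$ and on the uniqueness of inverses inside each ring $M_w$. The second, in the converse direction, is recognising that conditions $(2)$ and $(3)$ conspire---through the identity $f(x)f(z')=1+w'$---to force $0\cdot f(x)$ to be not merely an element but the greatest element of $I_x$; this is precisely what ties the existence of the inverse operation to the order-theoretic hypothesis on $I_x$. By contrast, once the dictionary ``$w\in I_x$ iff $\phi_w(x)$ is invertible in $M_w$, with a unique witness'' is in place, equations $(1)$, $(2)$ and $(4)$ reduce to routine computations in the rings $M_w$ and in the idempotent semilattice $0\cdot M$.
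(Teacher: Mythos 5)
Your proposal is correct under the same reading of the hypothesis that the paper itself uses (see the caveat below), but it follows a genuinely different route. The paper's proof is purely equational: it never invokes the decomposition of Theorem~\ref{T:union}, and verifies $(1)$--$(4)$ and the maximality claims by direct manipulation of the pre-meadow axioms (its proof of $(3)$ is the chain of identities $(iii)$--$(vi)$). You instead front-load a structural dictionary: for $w\le 0\cdot x$, one has $w\in I_x$ if and only if $\phi_w(x)$ is invertible in the ring $M_w$, the witness being the ring inverse. Note that $\phi_w(u)=u\cdot(1+w)=u+w$ (since $u\cdot w=(0\cdot u)\cdot w=w$ whenever $w\le 0\cdot u$), so your $\phi_w$ is exactly the transition map of Proposition~\ref{P:Transitionmaps}; since that proposition appears later in the paper and is stated only for meadows, you would need to include its (purely pre-meadow) proof at this point. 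The dictionary buys three things the paper does not make explicit: (a) well-definedness of $x\inv:=z_x$ --- the paper never remarks that the witness attached to the maximal element of $I_x$ is unique, whereas for you it is an inverse in a ring, hence unique; (b) a short proof of $(3)$ via $w_{xy}=w_x\wedge w_y$ together with uniqueness of inverses in $M_{w_{xy}}$, replacing the paper's computations $(iii)$--$(vi)$; (c) a merged treatment of the forward implication and of uniqueness (your computation $f(x)f(z')=f(xz')=f(1+w')=1+w'$ followed by multiplication by $0$ is the paper's forward argument, streamlined). What the paper's approach buys in exchange is that it stays entirely within equational reasoning, the natural setting for meadows viewed as equational theories, and needs no machinery beyond Section~2.

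The caveat concerns your step ``$I_x$ is downward closed, so its unique maximal element is the greatest element''. That inference is invalid in general: a downward-closed subset of a meet-semilattice may consist of an infinite ascending chain together with one incomparable element, and then that element is the unique maximal element without being an upper bound. This can even be realized inside a pre-meadow with $\abf$: take $0\cdot M=\{0\}\cup\{a\}\cup\{b_1<b_2<\cdots\}\cup\{\abf\}$ with $a<0$ and $b_i<0$ but $a$ incomparable to every $b_i$, and rings $M_0=\z$, $M_a=M_{b_i}=\q$ with the obvious transition maps; then for $x=2\in M_0$ the set $I_x=\{a,b_1,b_2,\dots,\abf\}$ has unique maximal element $a$ and no greatest element. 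In fact, under the literal reading of ``unique maximal element'' the backward implication of the theorem is false in this example (your own forward argument shows that $0\cdot x\inv$ would have to be an upper bound of $I_x$, so no inverse function can exist, although every $I_x$ has a unique maximal element). However, this is a defect you share with the paper rather than a flaw of your route: in step $(v)$ of its verification of $(3)$, the paper likewise uses the unique maximal elements of $I_x$ and $I_y$ as upper bounds for arbitrary elements of those sets, i.e.\ it silently reads ``unique maximal element'' as ``maximum''. Under that intended reading your proof is complete, and the downward-closure inference becomes unnecessary; you should simply adopt the convention that the hypothesis asserts $I_x$ has a greatest element.
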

    \begin{proof}        Suppose first that there is a function $(\cdot)\inv:M\rightarrow M$ satisfying $(1)-(4)$. We show that  $0\cdot x\inv$ is the unique maximal element of $I_x$, for all $x\in M$. In order to do that it is enough to show that $0\cdot x\inv \geq 0\cdot z$, for all $0\cdot z\in I_x$.  

        By definition we have that $x\cdot z = 1+0\cdot z.$ Multiplying both sides by $0$ we obtain
        \begin{equation}\label{E:i}\tag{$i$}
            0\cdot (x \cdot z)=0\cdot z.
        \end{equation}

        By properties $(2)$ and $(3)$ of the map $(\cdot)\inv$ we have
        \begin{equation}\tag{$ii$}
            x\cdot z\stackrel{\text{\normalfont\mbox{def}}}{=}1+0\cdot z\stackrel{(2)}{=} (1+0\cdot z)\inv \stackrel{\text{\normalfont\mbox{def}}}{=} (x\cdot z)\inv\stackrel{(3)}{=} x\inv \cdot z\inv.
        \end{equation}
        Then
        \begin{align*}
            0\cdot z\cdot x\inv&\stackrel{(i)}{=} (0\cdot x\cdot z) \cdot x\inv \stackrel{(ii)}{=} 0\cdot (x\inv z\inv) \cdot x\inv = 0\cdot (x\inv \cdot z\inv)  \stackrel{(3)}{=} 0\cdot (x\cdot z)\inv\\
            &\stackrel{\text{\normalfont\mbox{def}}}{=} 0\cdot (1+0\cdot z)\inv \stackrel{(2)}{=}0\cdot (1+0\cdot z) = 0\cdot z.
       \end{align*}

        Hence, $0\cdot x\inv\geq 0\cdot z$, for all $0\cdot z\in I_x$ and therefore $0\cdot x\inv$ is the unique maximal element of $I_x$.

        Assume now that $I_x$ has a unique maximal element, for all $x\in M$. 
        Note that $I_x \neq \emptyset$, since $\abf \in I_x$. 
        We define the function

        $$x\inv := z_x,$$

    \noindent    where $z_x\in M$ is such that $x\cdot z_x=1+0\cdot z_x$ and $0\cdot z_x$ is the unique maximal element of $I_x$. We show that the properties $(1)-(4)$ hold. \\

    $(1)$    From the definition of $I_x$ with $z=x\inv$ it immediately follows that
        $$x\cdot x\inv = 1+ 0\cdot x\inv.$$

    $(2)$   Since $(1+0\cdot x)\cdot (1+0\cdot x)=1+0\cdot (1+0\cdot x)$, we have that $0\cdot x\in I_{1+0\cdot x}$, so all that remains to be seen is that this is indeed the unique maximal element of $I_{1+0\cdot x}$. If $0\cdot z\in I_{1+0\cdot x}$, then
        $$ (1+0\cdot x )\cdot z= 1+0\cdot z, $$
        which implies that $0\cdot x \cdot z =0\cdot z$. Then 
        $$0\cdot (1+0\cdot x)\cdot z = 0\cdot x\cdot z = 0\cdot z.$$

        Hence $0\cdot x \geq 0. \cdot z$, for all $0 \cdot z \in I_{1+0\cdot x}$, which shows that it is the unique maximal element of $I_{1+0\cdot x}$. Moreover,  $(1+0\cdot x)\inv =1+0\cdot x$.\\

        $(3)$ Let $x,y\in M$. Observe that

        \begin{align*}
            (x\cdot y)\cdot (x\inv\cdot y\inv)&=(x\cdot x\inv)\cdot (y\cdot y\inv)\\
            &=(1+0\cdot x\inv)\cdot (1+0\cdot y\inv)=1+0\cdot x\inv \cdot y\inv,
        \end{align*}

       \noindent and so $0\cdot x\inv y\inv \in I_{x\cdot y}$. We show that $0.x\inv\cdot y\inv$ is the unique maximal element of $I_{x\cdot y}$. Let $0\cdot w\in I_{x\cdot y}$. Then, by definition,
        
        $$(x\cdot y)\cdot w= 1+0\cdot w.$$

        Multiplying both sides by $0$ we obtain

        \begin{equation}\tag{$iii$}
            0\cdot (x \cdot y )\cdot w = 0\cdot w.
        \end{equation}

        Then 
        \begin{equation}\tag{$iv$}
        0\cdot w\cdot y \stackrel{(iii)}{=} 0\cdot (x \cdot y \cdot w) \cdot y = 0\cdot x \cdot y \cdot w= \stackrel{(iii)}{=} 0 \cdot w.
        \end{equation}

        Hence 
        $$x\cdot(y\cdot w)=1+0\cdot w= 1+0\cdot (y\cdot w),$$

        and therefore we have that $0\cdot (y\cdot w)\in I_x$. With a similar argument one shows that $0\cdot (x\cdot w)\in I_y$. Since, by definition, $x\inv$ and $y\inv$ are the maximal elements of $I_x$ and $I_y$, respectively, we have

        \begin{equation}\tag{$v$}
        \begin{split}
            0\cdot x\inv (y\cdot w)&=0\cdot (y\cdot w) \stackrel{(iv)}{=}0\cdot w \mbox{, and} \\
             0\cdot y\inv (x\cdot w)&=0\cdot (x\cdot w) \stackrel{\phantom{(iv)}}{=} 0\cdot w.
            \end{split}
        \end{equation}
        
        Also, by property $(1)$ we have

        \begin{equation}\tag{$vi$}
            0\cdot x\cdot x\inv =0\cdot x\inv.
        \end{equation}

        We are now able to show that $0\cdot x\inv \cdot y\inv$ is the maximal element of $I_{x\cdot y}$. Indeed,

        \begin{align*}
        0\cdot (x\inv\cdot y\inv ) \cdot w &\stackrel{(vi)}{=} 0\cdot (x\cdot x\inv) \cdot y\inv \cdot w= (0\cdot y\inv \cdot x\cdot w)\cdot x\inv=\\
        &\stackrel{(v)}=0\cdot w \cdot x\inv \stackrel{(iv)}{=} 0\cdot (y\cdot w)\cdot x\inv \stackrel{(v)}{=}0\cdot w.
        \end{align*}

        Then $0\cdot (x\inv \cdot y\inv)$ is the unique maximal element of $I_{x\cdot y}$ and  $(x\cdot y)\inv =x\inv \cdot y\inv$.\\
        
       $(4)$  Let $0\cdot x\in I_0$. By the definition of $I_0$, 
            $$0\cdot x = 1+0\cdot x.$$
        Then the unit of the ring $M_{0\cdot x}$ is equal to its zero (see Theorem \ref{T:union}) and so $M_{0\cdot x}$ must be the trivial ring $\{\abf\}$. Then $0\cdot x =\abf$, which means that $I_0=\{\abf\}$ and therefore $0\inv =\abf$.

        In order to show unicity, let $x\mapsto \frac{1}{x}$ be another function satisfying the properties $(1)-(4)$.

            Let $x\in M$. Then we have
            \begin{align}\label{E:Div1}
                \frac{1}{x}\cdot \frac{1}{x\inv} & = \frac{1}{x\cdot x\inv}=\frac{1}{1+0\cdot x\inv} = 1+0\cdot x\inv =x\cdot x\inv.
            \end{align}

            Hence $\frac{1}{x}\cdot \frac{1}{x\inv} =x\cdot x\inv$.
            We also have that
            \begin{align}\label{E:Div2}
                0\cdot x\inv =& 0\cdot 1 + 0\cdot 0\cdot x\inv=0\cdot(1+0\cdot x\inv)  \\
                &= 0\cdot (x \cdot x\inv) = 0\cdot \left(\frac{1}{x}\cdot \frac{1}{x\inv}\right) = 0\cdot \left(1+0\cdot \frac{1}{x}\right)=0\cdot \frac{1}{x}.
            \end{align}
            We conclude that $0\cdot x\inv=0\cdot \frac{1}{x}$.
            Now we have that
            \begin{align*}
                x\cdot x\inv = 1+ 0\cdot x\inv = 1+0\cdot \frac{1}{x}=x\cdot \frac{1}{x}.
            \end{align*}
            Hence $x\cdot x\inv =x\cdot \frac{1}{x}$.
            Combining \eqref{E:Div1} and \eqref{E:Div2} we have
            $$x\inv=(x\cdot x\inv) \cdot x\inv=\left(x\cdot \frac{1}{x}\right)\cdot x\inv = (x\cdot x\inv)\cdot \frac{1}{x} = \left(x\cdot \frac{1}{x}\right)\cdot \frac{1}{x} = \frac{1}{x}.$$
        Hence $x\inv = \frac{1}{x}$.
        
    \end{proof}

\begin{definition}
    Let $M$ be a pre-meadow with $\abf$. If for all $x\in M$ the set 
    $$ I_x=\{0\cdot z \in 0\cdot M\mid x\cdot z=1+0\cdot z\}$$
   has a unique maximal element, we say that $M$ is an \emph{invertible pre-meadow}.
\end{definition}

We observe that, since $0\cdot M$ is not totally ordered, it is possible for $I_x$ to have more than one maximal element, this is illustrated in Example \ref{Example_revisited} $(3)$ below.

\begin{exmp}\label{Example_revisited}
\begin{enumerate}
    \item In the pre-meadow with $\abf$, $M=R\sqcup\{\abf\}$, where $R$ is a unital commutative ring, we have that  $I_x=\{0,\abf\}$ if $x$ is invertible in $R$, and in this case $x\inv$ is the inverse of $x$ in $R$;  if $x$ is not invertible in $R$ then $I_x=\{\abf\}$ and the inverse of $x$ is $\abf$.
    \item In the pre-meadow with $\abf$, $M=\z\times\{0\}\sqcup\q\times\{1\}\sqcup \{\abf\}$ we have that for any $(m,0)\in \z\times \{0\}$, with $m\notin \{-1,0,1\}$, it holds that $I_{(m,0)}=\{(0,1),\abf\}$, and so the inverse of $(m,0)$ is $(\frac{1}{m},1)$.
    \item Consider $M=\z\times \{0\}\sqcup \q\times \{1\}\sqcup \q \times \{2\}\sqcup \{\abf\}$ with the operations
    $$ (x,i)+(y,j)=
\begin{cases}
(x+ y,i),&\;i=j \text{ or } j=0\\
(x+ y,j),&\;i=0\\
\abf,&\;\text{otherwise}
\end{cases} $$
and
$$ (x,i)\cdot(y,j)=
\begin{cases}
(x\cdot y,i),&\;i=j \text{ or } j=0\\
(x\cdot y,j),&\;i=0\\
\abf,&\;\text{otherwise}
\end{cases} $$

We have that $M$ is a pre-meadow with $\abf$, but not an invertible pre-meadow.     
If an inverse function existed, then in $M\setminus \{(-1,0),(0,0),(1,0)\}$ the inverse of $(x,0)$ would be either $(\frac{1}{x},i)$ or $\abf$. However, $i$ must be different from $0$ because otherwise $\frac{1}{m}\in \z$ and $m=-1$ or $1$. If, on the other hand, $i=1$, then
$$\left((x,0)\cdot\left(\frac{1}{x},2\right)\right)\inv= (1,2)\inv=(1,2)$$

and, from Property $(3)$, 
$$\left((x,0)\cdot
\left(\frac{1}{x},2\right)\right)\inv= (x,0)\inv\cdot\left(\frac{1}{x},2\right)\inv=\left(\frac{1}{x},1\right)\cdot(x,2)=\abf. $$
which would imply $\abf=(1,2)$. A similar argument holds if $i=2$ or if the inverse would be $\abf$. 

An alternative way to show that $M$ is not an invertible pre-meadow is to show that, for example, the set $I_{(2,0)}=\{(0,1),(0,2),\abf\}$ has both $(0,1)$ and $(0,2)$ as maximal elements.
\end{enumerate}
\end{exmp}

The following result covers some basic properties of invertible pre-meadows. The proofs are essentially the same as in \cite[Propositions 2.2.1 and 2.3.1]{Bergstra2015}.

    \begin{proposition}\label{P:Identities}
        Let $M$ be a invertible pre-meadow. Then
        \begin{enumerate} 
            \item $(x \cdot x\inv ) \cdot x\inv= x\inv$
            \item $(-x)\inv = -(x\inv)$ 
            \item $(x \cdot x\inv)\inv = x \cdot x\inv$
            \item $(x\inv)\inv = x + 0 \cdot x\inv$
            \item $x \cdot \abf =-\abf =  \abf\inv=\abf$
            \item $0\cdot x = \abf\rightarrow x= \abf$ 
            \item $0\cdot x\cdot y = 0 \rightarrow 0\cdot x = 0$.
        \end{enumerate}
    \end{proposition}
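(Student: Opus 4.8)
The plan is to verify the seven identities essentially in the order listed, since several of them build on earlier ones and on the three structural facts already at our disposal: the decomposition of $M$ into the rings $M_z$ (Theorem~\ref{T:union}), the fact that $(0\cdot M,+)=(0\cdot M,\cdot)$ is an idempotent commutative monoid whose induced order is a semilattice with maximum $0$ (Propositions~\ref{P:0Dmonoid} and~\ref{P:order}), and the four defining properties of $(\cdot)\inv$ from Theorem~\ref{T:Existenceofinverse}. Throughout, the guiding observation is that computations localize: an element $y$ lives in the ring $M_{0\cdot y}$ with zero $0\cdot y$ and unit $1+0\cdot y$, and that $0\cdot(x\cdot y)=(0\cdot x)\cdot(0\cdot y)$ is exactly the meet of $0\cdot x$ and $0\cdot y$ in the semilattice $0\cdot M$.

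The identities (1), (3), (6) and (7) are short. For (1) I would expand $(x\cdot x\inv)\cdot x\inv$ using property (1) of the inverse and $0\cdot(x\inv\cdot x\inv)=0\cdot x\inv$ (Proposition~\ref{P:pre-meadowres}(7)), then absorb $0\cdot x\inv$ as the zero of the ring containing $x\inv$. Identity (3) is immediate: rewrite $x\cdot x\inv$ as $1+0\cdot x\inv$ by property (1) and apply property (2). For (6), the equality $0\cdot x=\abf$ says $x\in M_\abf$, which is the singleton $\{\abf\}$ by the definition of a pre-meadow with $\abf$. For (7), the identity $0\cdot(x\cdot y)=(0\cdot x)\cdot(0\cdot y)$ shows the hypothesis reads $(0\cdot x)\cdot(0\cdot y)=0$; since $0$ is the maximum of the semilattice, I would conclude $0\cdot x=(0\cdot x)\cdot 0=(0\cdot x)\cdot\big((0\cdot x)\cdot(0\cdot y)\big)=(0\cdot x)\cdot(0\cdot y)=0$.

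For (2) and (5) I first establish two auxiliary facts. Computing $(-1)\cdot(-1)=1$ (Proposition~\ref{P:pre-meadowres}(5)) together with $0\cdot(-1)=0$ shows $0\in I_{-1}$, which is then its maximal element, so $(-1)\inv=-1$; identity (2) follows from $-x=(-1)\cdot x$ and property (3). For (5) the crux is $x\cdot\abf=\abf$: since $\abf=0\cdot\abf$ is the minimum of $0\cdot M$ (because $z+\abf=\abf$ means $z\wedge\abf=\abf$ for every $z\in 0\cdot M$), I would show $x\cdot\abf\in 0\cdot M$ and that $(x\cdot\abf)\cdot\abf=x\cdot(\abf\cdot\abf)=x\cdot\abf$, i.e.\ $x\cdot\abf\leq\abf$, forcing $x\cdot\abf=\abf$. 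The remaining equalities $-\abf=\abf$ and $\abf\inv=\abf$ then follow by taking $x=-1$ and by observing $I_\abf=\{\abf\}$, respectively.

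The main obstacle is (4), $(x\inv)\inv=x+0\cdot x\inv$. My plan is to set $u:=x+0\cdot x\inv$ and to show it is the inverse of $x\inv$ in the ring $M_{0\cdot x\inv}$. First I would compute $x\inv\cdot u=1+0\cdot x\inv$, using property (1) and idempotency, together with $0\cdot u=0\cdot x\inv$; the latter rests on the general inequality $0\cdot y\inv\leq 0\cdot y$ (obtained by multiplying property (1) by $0$), applied to $y=x$ to give $0\cdot x\inv\leq 0\cdot x$ and hence $0\cdot x+0\cdot x\inv=0\cdot x\inv$. Thus $u$ lies in $M_{0\cdot x\inv}$ and $0\cdot x\inv\in I_{x\inv}$. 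Next, applying the same inequality to $y=x\inv$ gives $0\cdot(x\inv)\inv\leq 0\cdot x\inv$, while maximality of $0\cdot(x\inv)\inv$ in $I_{x\inv}$ gives the reverse inequality; hence $0\cdot(x\inv)\inv=0\cdot x\inv$, so $(x\inv)\inv$ also lies in $M_{0\cdot x\inv}$. Both $u$ and $(x\inv)\inv$ are then multiplicative inverses of $x\inv$ in this ring, each multiplying $x\inv$ to its unit $1+0\cdot x\inv$, and uniqueness of inverses in a commutative ring yields $u=(x\inv)\inv$. The delicate point is precisely the bookkeeping ensuring that all three elements genuinely share the same zero $0\cdot x\inv$, so that the ring-theoretic uniqueness argument is legitimately applicable.
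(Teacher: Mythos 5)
Your proof is correct, but it takes a genuinely different route from the paper's. The paper does not actually prove Proposition~\ref{P:Identities}: it defers to the equational derivations of \cite[Propositions 2.2.1 and 2.3.1]{Bergstra2015}, which obtain these identities syntactically from the axioms of common meadows (a transfer that is legitimate because properties $(1)$--$(4)$ of Theorem~\ref{T:Existenceofinverse} are precisely the inverse axioms $(\M_{10})$--$(\M_{13})$). You instead argue structurally, inside the decomposition $M=\bigsqcup_{z\in 0\cdot M}M_z$ of Theorem~\ref{T:union} and the semilattice $0\cdot M$ of Propositions~\ref{P:0Dmonoid} and~\ref{P:order}: each identity becomes a statement about ordinary commutative rings, most strikingly item $(4)$, which reduces to uniqueness of multiplicative inverses in the ring $M_{0\cdot x\inv}$ once you have checked that $x+0\cdot x\inv$, $x\inv$ and $(x\inv)\inv$ all lie in that ring. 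Your approach buys conceptual transparency and shows the lattice-of-rings picture doing real work; the equational route buys model-independence and suitability for term rewriting, which is the concern of the cited source. One point you should make explicit: in item $(4)$ (and in your derivation of $(-1)\inv=-1$ for item $(2)$) you use that $0\cdot x\inv$ dominates \emph{every} element of $I_x$, i.e.\ is a greatest element, not merely the unique maximal element required by the definition of invertible pre-meadow; in an arbitrary poset these notions differ. That stronger fact is established in the first half of the proof of Theorem~\ref{T:Existenceofinverse} (where it is shown that $0\cdot x\inv\geq 0\cdot z$ for all $0\cdot z\in I_x$), so your argument is sound, but you should cite that step, or isolate it as a lemma, rather than invoke ``maximality'' alone.
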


Let us now recall the notion of common meadow,  introduced in \cite{Bergstra2015}. 

\begin{definition}
    A \emph{common meadow} is a structure $(M,+,-,\cdot)$ satisfying the following equations
    
\begin{tabular}{l c c r}
\\[-2mm]
$(\M_1)$ &\qquad \qquad \qquad&$(x+y)+z=x+(y+z) $  & \qquad \qquad \\[2mm]
$(\M_2)$ && $x+y=y+x $ \\[2mm]
$(\M_3)$ && $x+0=x$  \\[2mm]
$(\M_4)$ && $x+ (-x)=0 \cdot x$ \\[2mm]
$(\M_5)$ && $(x \cdot y) \cdot z=x \cdot (y \cdot z)$  \\[2mm]
$(\M_6)$ && $x \cdot y=y \cdot x $ \\[2mm]
$(\M_7)$ && $1 \cdot x=x$ \\[2mm]
$(\M_8)$ && $x \cdot (y+z)= x \cdot y + x \cdot z$ \\[2mm]
$(\M_9)$ && $-(-x)=x$ \\[2mm]
$(\M_{10})$ && $x \cdot x^{-1}=1 + 0 \cdot x^{-1}$\\[2mm]
$(\M_{11})$ && $(x \cdot y)^{-1} = x^{-1} \cdot y^{-1}$\\[2mm]
$(\M_{12})$ && $(1 + 0 \cdot x)^{-1} = 1 + 0 \cdot x $\\[2mm]
$(\M_{13})$ && $ 0^{-1}=\abf$\\[2mm]
$(\M_{14})$ && $x+ \abf = \abf $\\[2mm]
\end{tabular}
\end{definition}

Note that in a common meadow we have that $0\cdot(x+y)=0\cdot x \cdot y$ (see \cite[Proposition~2.2.1]{Bergstra2015}). The following result then easily follows from Theorem \ref{T:Existenceofinverse}.

\begin{corollary}\label{C:precomon}
    A pre-meadow $M$ is an invertible pre-meadow if and only if it is a common meadow.
\end{corollary}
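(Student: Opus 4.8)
The plan is to prove the biconditional in Corollary~\ref{C:precomon} by extracting the extra structure required in each direction and then appealing directly to Theorem~\ref{T:Existenceofinverse}. First I would establish the forward direction: suppose $M$ is an invertible pre-meadow. By Definition~\ref{D:PreMeadowA} it is in particular a pre-meadow with $\abf$, so it satisfies $(\PM_1)$--$(\PM_{10})$ together with the existence of the absorbing element $\abf$, which is exactly $(\M_1)$--$(\M_9)$ and $(\M_{14})$. Since $I_x$ has a unique maximal element for all $x$, Theorem~\ref{T:Existenceofinverse} furnishes a function $(\cdot)\inv\colon M\to M$ satisfying its four listed properties, and I would observe that these are literally the axioms $(\M_{10})$, $(\M_{12})$, $(\M_{11})$ and $(\M_{13})$ respectively. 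Thus $M$, equipped with this inverse operation, satisfies all fourteen common-meadow axioms and is a common meadow.

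For the converse, suppose $M$ is a common meadow. The axioms $(\M_1)$--$(\M_9)$ immediately give $(\PM_1)$--$(\PM_9)$, and I would need to verify the one remaining pre-meadow axiom $(\PM_{10})$, namely $0\cdot(x+y)=0\cdot x\cdot y$; this is precisely the identity noted in the remark just before the corollary (citing \cite[Proposition~2.2.1]{Bergstra2015}), so $M$ is a pre-meadow. The absorbing axiom $(\M_{14})$ gives condition~(2) of Definition~\ref{D:PreMeadowA}, and I would check condition~(1), the existence of a unique $z\in 0\cdot M$ with $|M_z|=1$: the element $\abf=0\inv$ satisfies $x+\abf=\abf$ so $M_\abf=\{\abf\}$, and by Theorem~\ref{T:union} any singleton ring $M_z$ must have its unit $1+z$ equal to its zero $z$, forcing $z=\abf$. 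Hence $M$ is a pre-meadow with $\abf$.

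It then remains to show that this pre-meadow with $\abf$ is invertible, i.e.\ that each $I_x$ has a unique maximal element. Here I would invoke the forward (``only if'') implication of Theorem~\ref{T:Existenceofinverse} directly: since $M$ is a common meadow it comes with an inverse function $(\cdot)\inv$ satisfying properties $(1)$--$(4)$ of that theorem (these being $(\M_{10})$--$(\M_{13})$), and the ``only if'' half of Theorem~\ref{T:Existenceofinverse} shows exactly that the existence of such a function forces every $I_x$ to have a unique maximal element, namely $0\cdot x\inv$. Therefore $M$ is an invertible pre-meadow.

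The main subtlety to get right is bookkeeping rather than genuine difficulty: one must confirm that the four properties produced by Theorem~\ref{T:Existenceofinverse} match the common-meadow inverse axioms $(\M_{10})$--$(\M_{13})$ in the correct order, and that the only pre-meadow axiom not already present among $(\M_1)$--$(\M_9)$, namely $(\PM_{10})$, is supplied by the cited identity $0\cdot(x+y)=0\cdot x\cdot y$. Once this matching is made explicit, the corollary is an essentially immediate repackaging of Theorem~\ref{T:Existenceofinverse}, so I expect no serious obstacle beyond ensuring that the equivalence of the two axiomatic presentations is stated cleanly.
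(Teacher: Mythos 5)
Your proposal is correct and takes essentially the same route as the paper, whose proof is exactly this: observe that $0\cdot(x+y)=0\cdot x\cdot y$ holds in common meadows (recovering $(\PM_{10})$) and then read the corollary off from both directions of Theorem~\ref{T:Existenceofinverse}, with properties $(1)$--$(4)$ matched against $(\M_{10})$--$(\M_{13})$. The only step you assert without justification --- that a singleton component $M_z$, i.e.\ $1+z=z$ with $z\in 0\cdot M$, forces $z=\abf$ --- is true and fills in one line, e.g.\ $z=(1+0\cdot z)\inv=z\inv=(0\cdot z)\inv=0\inv\cdot z\inv=\abf\cdot z\inv=\abf$ using $(\M_{11})$--$(\M_{13})$ and the multiplicative absorption of $\abf$; the paper's own one-sentence proof glosses over this verification entirely.
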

    
With the previous result in mind we will refer to invertible pre-meadows as "common meadows", or simply as "meadows", since all the meadows considered below are common meadows.

\begin{proposition}\label{P:AssemblyProduct}
    Let $M$ be a meadow. Let $U(M):=\{x\in M\mid 0\cdot x = 0\cdot x\inv\}$. Define $u(x)=1+0\cdot x$ and $d(x)=x\inv$, for all $x\in U(M)$. Then the structure $(P,\cdot,u,d)$ is an assembly.
\end{proposition}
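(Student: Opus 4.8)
The plan is to verify the three assembly axioms for $(U(M),\cdot)$ with the local identity $e:=u$ and the local inverse $s:=d$, exactly mirroring Proposition~\ref{P:Assembly} but for the multiplicative structure; here we must restrict to $U(M)$ precisely because multiplication only behaves like an assembly operation where $0\cdot x=0\cdot x\inv$. (I read the displayed structure $(P,\cdot,u,d)$ as $(U(M),\cdot,u,d)$, since $u,d$ are defined on $U(M)$.) Before touching the axioms I would isolate two computational facts and reuse them throughout: (a) $0\cdot(xy)=(0\cdot x)(0\cdot y)$, which follows by reassociating $0\cdot x\cdot 0\cdot y=(0\cdot 0)\cdot xy=0\cdot(xy)$ using $0\cdot 0=0$ from Proposition~\ref{P:pre-meadowres}; and (b) on $0\cdot M$ addition and multiplication coincide and every element is idempotent (Proposition~\ref{P:0Dmonoid}), so $z_1+z_2=z_1z_2$ and $z+z=z$. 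I would also record the reformulation $u(x)=1+0\cdot x=x\cdot x\inv$, valid for $x\in U(M)$ by $(\M_{10})$ together with $0\cdot x\inv=0\cdot x$; this identity is what drives the whole argument.

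Next I would establish that $(U(M),\cdot)$ is a (nonempty) semigroup and that the structure maps land in $U(M)$. Associativity is inherited from $M$ and $\abf\in U(M)$. For closure, given $x,y\in U(M)$ I use $(\M_{11})$ and fact (a): $0\cdot(xy)\inv=0\cdot(x\inv y\inv)=(0\cdot x\inv)(0\cdot y\inv)=(0\cdot x)(0\cdot y)=0\cdot(xy)$, so $xy\in U(M)$. That $u(x)\in U(M)$ is immediate from $(\M_{12})$ (which gives $u(x)\inv=u(x)$), and $x\inv\in U(M)$ follows from $(x\inv)\inv=x+0\cdot x\inv$ (Proposition~\ref{P:Identities}(4)) together with a short computation using (a) and (b) that yields $0\cdot(x\inv)\inv=0\cdot x=0\cdot x\inv$.

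For $(A_1)$ with $e(x)=u(x)$, the absorption $x\cdot u(x)=x+0\cdot x\cdot x=x+0\cdot x=x$ uses Proposition~\ref{P:pre-meadowres}(7) and that $0\cdot x$ is the zero of the ring $M_{0\cdot x}$ (Theorem~\ref{T:union}), with commutativity giving $u(x)\cdot x=x$. The minimality clause is the only step I expect to be genuinely delicate, and the observation $u(x)=x\,x\inv$ together with the fact that $x$ is a unit of the ring $M_{0\cdot x}$ dissolves it: if $f\in U(M)$ satisfies $fx=xf=x$, then multiplying $fx=x$ on the right by $x\inv$ gives $f\cdot(x\,x\inv)=x\,x\inv$, i.e.\ $f\cdot u(x)=u(x)$, whence $u(x)f=f\,u(x)=u(x)=e(x)$. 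For $(A_2)$ with $s(x)=x\inv$, axiom $(\M_{10})$ and $x\in U(M)$ give $x\cdot x\inv=1+0\cdot x\inv=1+0\cdot x=u(x)=e(x)$, while $e(x\inv)=1+0\cdot x\inv=1+0\cdot x=e(x)$.

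Finally, for $(A_3)$ I would expand
$u(x)u(y)=(1+0\cdot x)(1+0\cdot y)=1+\bigl(0\cdot x+0\cdot y+(0\cdot x)(0\cdot y)\bigr)$
and collapse the bracket with (a) and (b): since $0\cdot x+0\cdot y=(0\cdot x)(0\cdot y)=0\cdot(xy)$ and $+$ is idempotent on $0\cdot M$, the bracket equals $0\cdot(xy)$, so $u(x)u(y)=1+0\cdot(xy)=u(xy)=e(xy)$. The main obstacle is thus the minimality part of $(A_1)$, and it is defused entirely by the identity $u(x)=x\,x\inv$ and invertibility of $x$ inside $M_{0\cdot x}$; everything else reduces to routine applications of (a), (b), and the ring structure of Theorem~\ref{T:union}.
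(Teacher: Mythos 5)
Your proposal is correct and follows essentially the same route as the paper's proof: closure of $U(M)$ under the product via $(\M_{11})$, the absorption law from $0\cdot(x\cdot x)=0\cdot x$, the minimality clause of $(A_1)$ via the identity $u(x)=x\cdot x\inv$ applied to $y\cdot x=x$, axiom $(A_2)$ directly from $(\M_{10})$, and $(A_3)$ by expanding $(1+0\cdot x)(1+0\cdot y)$ and absorbing the idempotent terms. Your additional checks that $\abf\in U(M)$ and that $u(x)$ and $x\inv$ themselves lie in $U(M)$ are details the paper leaves implicit, so they strengthen rather than alter the argument.
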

\begin{proof}
    First note that if $x,y\in U(M)$, then 
    $$0\cdot (x\cdot y)\inv=0\cdot x\inv \cdot y\inv = 0\cdot x\cdot y$$
    Then $U(M)$ is closed under the product.
    For $x\in U(M)$, we have
    $$u(x)\cdot x=(1+0\cdot x)\cdot x=x+0\cdot x\cdot x = x+0\cdot x = x$$
    and if $y\cdot x = x$ we have 
    $$y\cdot (1+0\cdot x) = y\cdot (1+0\cdot x\inv) \cdot y\cdot x\cdot x\inv = x\cdot x\inv= (1+0\cdot x\inv)=(1+0\cdot x).  $$ 
    Hence $(A_1)$ is satisfied.

    For $(A_2)$, note that $$x \cdot d(x)=x\cdot x\inv=1+0\cdot x\inv=1+0\cdot x=u(x),$$ and $$u(d(x))=1+0\cdot x\inv = 1+0\cdot x= u(x).$$

    Finally, for $(A_3)$ we have 
    \begin{equation*}
        \begin{split}    
        u(x\cdot y)&=1+0\cdot x \cdot y= 1+0\cdot x \cdot y+0\cdot x+0 \cdot y\\
        &= (1+0\cdot x)\cdot(1+0\cdot y)=u(x)\cdot u(y). \qedhere
        \end{split}
        \end{equation*}
\end{proof}

Recall that if $R$ is a commutative ring then $(R,+)$ is an abelian group, and the group of the units of $R$ is also an abelian group. In meadows we can replace abelian groups with commutative assemblies. By Proposition \ref{P:Assembly}, if $M$ is a meadow then $(M,+)$ is an commutative assembly, and by Proposition \ref{P:AssemblyProduct}, the set $\{x\in M\mid 0\cdot x = 0\cdot x\inv\}$ with the product is also a commutative assembly. This suggests a relation with an algebraic structure called \emph{association} (see \cite{DinisBerg(17),DinisBerg(book)}). In fact, it is an easy consequence of Propositions \ref{P:Assembly} and \ref{P:AssemblyProduct}  that a common meadow is always an association. However, we would like to point out that the converse is not true in general.
 
\section{Directed Lattices}\label{S:DirectedLattices}

Recall that, from Theorem \ref{T:union}, if $M$ is a pre-meadow then $M=\bigsqcup_{z\in 0\cdot M}M_z$, where each $M_z$ is a ring. Additionally, from Proposition \ref{P:order}  we have that $0\cdot M$ has a partial order, and so the partial order on pre-meadows induces a partial order on the set of rings $\{M_z\mid z\in 0\cdot M\}$. We will see next that it is possible to turn such partial order into a  directed graph.

\begin{proposition}\label{P:Transitionmaps}
            Let $M$ be a meadow. If $z,z'\in 0\cdot M$ are such that $z\leq z'$, then the map
            \begin{align*}
                f_{z,z'}:M_{z'}&\rightarrow M_z\\
                        x&\mapsto x+z
            \end{align*}
            is a ring homomorphism.

            Moreover, if $z,z',z''\in 0\cdot M$ are such that $z\leq z'\leq z''$, then $f_{z,z'}\circ f_{z',z''}=f_{z,z''}$.
        \end{proposition}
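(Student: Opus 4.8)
The plan is to verify directly that $f_{z,z'}(x) = x+z$ is a ring homomorphism from $M_{z'}$ to $M_z$, and then check the compositional (functoriality) identity. The first thing I would establish is that the map is well-defined, i.e.\ that $x+z \in M_z$ whenever $x\in M_{z'}$. For this I compute $0\cdot(x+z)$: using $(\PM_{10})$ (available since a meadow is a pre-meadow, by Corollary \ref{C:precomon}) we have $0\cdot(x+z) = 0\cdot x \cdot z$, and since $x\in M_{z'}$ gives $0\cdot x = z'$, this equals $z'\cdot z = z$ by the hypothesis $z\leq z'$ together with Definition \ref{D:order}. Hence $x+z\in M_z$, so the codomain is correct.

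Next I would check the homomorphism axioms. Additivity is $f_{z,z'}(x+y) = (x+y)+z$, which must equal $f_{z,z'}(x)+f_{z,z'}(y) = (x+z)+(y+z)$; the point here is that $z$ is idempotent for addition (it lies in $0\cdot M$, which by Proposition \ref{P:0Dmonoid} consists of additive idempotents, so $z+z=z$), and with commutativity and associativity of $+$ the two sides coincide. Multiplicativity requires $f_{z,z'}(x\cdot y) = (x\cdot y)+z$ to equal $(x+z)\cdot(y+z)$; expanding the right side by distributivity gives $x\cdot y + x\cdot z + z\cdot y + z\cdot z$, and I would absorb the cross terms using that $z=0\cdot w$ for some $w$, so that terms like $x\cdot z$ become multiples of $z$ that collapse back into $z$ — more precisely, since $z$ is the zero of the ring $M_z$ and $x\cdot z, z\cdot y, z\cdot z$ all land in $M_z$, they sum with $x\cdot y$ to $x\cdot y + z$. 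I would also confirm that the multiplicative identity is preserved: $f_{z,z'}(1+z') = 1+z'+z = 1+z$ (again using $z'+z=z$, which follows from $z\leq z'$ since $z\leq z'$ means $z\cdot z'=z$, and $z'+z = z$ can be derived from $z\cdot z' = z$), and $1+z$ is precisely the unit of $M_z$ by Theorem \ref{T:union}.

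Finally, for the functoriality statement with $z\leq z'\leq z''$, I would compute $(f_{z,z'}\circ f_{z',z''})(x) = f_{z,z'}(x+z') = (x+z')+z$ and compare it with $f_{z,z''}(x) = x+z$. By associativity this reduces to showing $z'+z = z$, which holds because $z\leq z'$.

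The step I expect to be the main obstacle is the careful absorption of the cross terms in the multiplicativity check and, relatedly, pinning down the small lemma that $z\leq z'$ implies $z'+z=z$ (equivalently that adding the smaller idempotent $z$ annihilates the larger one in the relevant sense). Everything else is a routine unwinding of the pre-meadow axioms and the ring structure on each $M_z$ supplied by Theorem \ref{T:union}.
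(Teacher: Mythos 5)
Your plan is correct and follows essentially the same route as the paper's proof: direct verification of well-definedness, the homomorphism identities, and functoriality, all resting on the facts that $z\cdot z'=z+z'=z$ when $z\leq z'$ and that elements of $0\cdot M$ are idempotent (the paper derives these inline, where you invoke Proposition \ref{P:0Dmonoid}). One caution for the step you flag as the main obstacle: in the multiplicativity check, the cross terms being members of $M_z$ is not by itself sufficient to absorb them --- what does the work is that each cross term equals $z$ exactly, e.g.\ $x\cdot z=x\cdot(0\cdot z)=(0\cdot x)\cdot z=z'\cdot z=z$, which is the precise content of your remark that multiples of $z$ ``collapse back into $z$''.
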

        \begin{proof}
            We start by showing that the map is well defined. For $x\in M_{z'}$ we have that $x=0 \cdot z'$ and $z \cdot z'=z$, since $z \leq z'$. Note also that $z$ is idempotent, since $z \in 0 \cdot M$. Then
            \[(x+z)\cdot 0=0\cdot x + 0\cdot z = 0\cdot z' + 0\cdot z =  0\cdot z' + 0\cdot z\cdot z'=(0+0\cdot z)\cdot z'=0\cdot z\cdot z' =0\cdot z. \]
            Hence $x+z \in M_z$.

            Now, let $x,y\in M_{z'}$. Then $z'=(x+y)\cdot z'$, and
            \begin{itemize}
                \item $f_{z,z'}(x+y)=x+y+z=x+z+y+z=f_{z,z'}(x)+f_{z,z'}(y)$.
                \item $f_{z,z'}(x\cdot y)=x \cdot y+z=x\cdot y+z+z=x\cdot y+z'\cdot z+z=x\cdot y+(x+y)\cdot z'\cdot z+z=x\cdot y+(x+y)\cdot z+z\cdot z =(x+z)\cdot (y+z)=f_{z,z'}(x) \cdot f_{z,z'}(y)$.
                \item $f_{z,z'}(1+z')=1+z'+z=1+z'+z\cdot z'=1+(1+z) \cdot z'=1+(1+z)\cdot 0 \cdot z'=1+z$.
            \end{itemize}
            Hence $f_{z,z'}$ is a ring homomorphism.

            Finally, take $z,z',z''\in 0\cdot M$ such that $z\leq z'\leq z''$. Recall that $z'+z = z'\cdot z=z$. Then, for $x\in M_{z''}$ we have
            \[f_{z,z'}\circ f_{z',z''}(x)=x+z'+z=x+z=f_{z,z''}(x).\qedhere\]
        \end{proof}

    Proposition \ref{P:Transitionmaps} suggests a relation between  meadows and commutative diagrams of rings. In fact, from Corollary \ref{C:precomon} one deduces that if $M$ is a meadow then there is a partial order relation in $\{M_z\mid z\in 0\cdot M\}$ that defines a lattice. So, to each meadow we can associate a commutative ring that has a lattice structure. With that in mind we give the following definition.

    \begin{definition}\label{D:LatticeRings}
        A \emph{directed lattice} of rings $\Gamma$ over a countable lattice $L$ consists on a family of commutative rings $\Gamma_i$ indexed by $i\in L$, such that $\Gamma_i$ is a unital commutative ring for all $i\in L\setminus \min(L)$ and $\Gamma_{\min(L)}$ is the zero ring, together with a family of ring homomorphisms $f_{j,i}:\Gamma_i\rightarrow\Gamma_j$ whenever $i>j$ such that $f_{j,k}\circ f_{i,j}=f_{i,k} $ for all $i>j>k$. 
    \end{definition}

    From Proposition \ref{P:Transitionmaps} we see that every common meadow $M$ can be associated with a directed lattice of rings over the lattice $0\cdot M$. The next result shows that the converse also holds. That is, we can associate a common meadow to every directed lattice of rings over a lattice. We denote by $R^{\times}$ the set of the invertible elements of the ring $R$.
    
    \begin{theorem}\label{T:DirectedLattice}
        Let $L$ be a lattice, and $\Gamma$ a directed lattice of rings over  $L$ such that, for all $i\in I$ and all $x\in \Gamma_i$ the set:
        $$J_x=\{j\in I\mid f_{j,i}(x)\in \Gamma_j^{\times}\}$$
        has a unique maximal element.
          Then, there exists a meadow $M=\bigsqcup_{i\in L}\Gamma_i$, such that the lattice $0\cdot M$ is equivalent to $L$.
        \end{theorem}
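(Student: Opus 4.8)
The plan is to equip the disjoint union $M=\bigsqcup_{i\in L}\Gamma_i$ with operations that restrict to the ring operations on each fibre and that combine elements of different fibres by first pushing them down to a common fibre along the transition maps. Writing $\wedge$ for the meet in $L$ and setting $f_{i,i}=\Id$, for $x\in\Gamma_i$ and $y\in\Gamma_j$ I would put $k=i\wedge j$ and define
\[ x+y:=f_{k,i}(x)+f_{k,j}(y),\qquad x\cdot y:=f_{k,i}(x)\cdot f_{k,j}(y), \]
the operations on the right being those of the ring $\Gamma_k$, together with $-x:=-x$ computed in $\Gamma_i$. For the constants I would take $\abf$ to be the unique element of $\Gamma_{\min(L)}$ and, writing $\top=\max(L)$ (which must exist, since the maximum $0$ of $0\cdot M$ forces $L$ to be bounded above), let $0$ and $1$ be the zero and unit of $\Gamma_\top$. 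With these choices $0\cdot x$ becomes the zero $0_i$ of $\Gamma_i$ for every $x\in\Gamma_i$, so each fibre is exactly a ring $M_{0_i}$ in the sense of Theorem~\ref{T:union}.

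The first block of work is to check the pre-meadow axioms $(\PM_1)$--$(\PM_{10})$ together with the two conditions of Definition~\ref{D:PreMeadowA}. These are routine once one observes that the cocycle identity $f_{k,j}\circ f_{j,i}=f_{k,i}$, the commutativity and associativity of $\wedge$, and the fact that the $f_{j,i}$ are unital ring homomorphisms (as in Proposition~\ref{P:Transitionmaps}) reduce every instance of an axiom to the corresponding ring identity in a single fibre. For example, associativity of $+$ holds because both $(x+y)+z$ and $x+(y+z)$ equal the sum of $f_{k,i}(x),f_{k,j}(y),f_{k,l}(z)$ in $\Gamma_k$ with $k=i\wedge j\wedge l$, and the identities involving $0$, $1$ and $\abf$ use $i\wedge\top=i$ and $i\wedge\min(L)=\min(L)$. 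Since $\Gamma_i$ has $1\neq 0$ for $i\neq\min(L)$ while $\Gamma_{\min(L)}$ is trivial, the unique singleton fibre is $\Gamma_{\min(L)}$, so $\abf$ is well defined and $x+\abf=\abf$; hence $M$ is a pre-meadow with $\abf$. Computing $0_i\cdot 0_j=0_{i\wedge j}$ then shows that $i\mapsto 0_i$ is a bijection $L\to 0\cdot M$ with $0_i\cdot 0_j=0_i\Leftrightarrow i\wedge j=i\Leftrightarrow i\leq j$, so it is a lattice isomorphism $L\cong 0\cdot M$.

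It remains to verify that $M$ is \emph{invertible}, i.e.\ that each set $I_x=\{0\cdot z\mid x\cdot z=1+0\cdot z\}$ has a unique maximal element; by Corollary~\ref{C:precomon} this is exactly what promotes the pre-meadow with $\abf$ to a common meadow. The crux is to transport the hypothesis on $J_x$ to $I_x$ along the isomorphism $L\cong 0\cdot M$. For $x\in\Gamma_i$ and $z\in\Gamma_m$ one computes $1+0\cdot z=1_m$ (the unit of $\Gamma_m$) and $x\cdot z\in\Gamma_{i\wedge m}$, so the equation $x\cdot z=1+0\cdot z$ can hold only when $m\leq i$, in which case it asserts that $z$ is the inverse of $f_{m,i}(x)$ in $\Gamma_m$. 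Thus $0_m\in I_x$ if and only if $m\leq i$ and $f_{m,i}(x)\in\Gamma_m^{\times}$, that is, if and only if $m\in J_x$. Hence the isomorphism $L\cong 0\cdot M$ carries $J_x$ bijectively onto $I_x$, and, being order preserving, sends the assumed unique maximal element of $J_x$ to a unique maximal element of $I_x$. By Corollary~\ref{C:precomon} (and Theorem~\ref{T:Existenceofinverse}), $M$ is then a common meadow with $0\cdot M\cong L$, as required.

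I expect the main obstacle to be essentially bookkeeping: establishing well-definedness and the pre-meadow axioms demands systematically collapsing multi-fibre expressions to a common fibre via the cocycle condition, and one must stay attentive to the boundedness of $L$ (the existence of $\top=\max(L)$) so that the global constants $0$ and $1$ behave correctly. The genuinely substantive point, by contrast, is the clean equivalence $m\in J_x\Leftrightarrow 0_m\in I_x$, which shows that the condition imposed on $J_x$ is precisely the translation of invertibility across the identification $0\cdot M\cong L$.
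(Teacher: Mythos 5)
Your proposal is correct and follows essentially the same route as the paper's own proof: the same fibrewise operations via transition maps to the meet, the same verification that this yields a pre-meadow with $\abf$ whose lattice $0\cdot M$ is isomorphic to $L$, and the same order-isomorphism between $I_x$ and $J_x$ feeding into Corollary~\ref{C:precomon}. The only cosmetic difference is that you compress the axiom checks that the paper carries out in detail, and you state explicitly the constraint $m\leq i$ that the paper leaves implicit in its $I_x$--$J_x$ correspondence.
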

            
        \begin{proof}
            Let $M=\bigsqcup_{i\in L}\Gamma_i$ be the disjoint union of the rings $\Gamma_i$. 
            Since $L$ is a lattice there exists a unique maximal element $M_0:=\max(L)$.            
             We denote the meet of $i,j\in L$ by $i\wedge j$. Let $x\in \Gamma_i$ and $y\in \Gamma_j$. The operations in $M$ are defined as follows
            \begin{itemize}
                \item $x+y=f_{i\wedge j,i}(x)+_{i\wedge j}f_{i\wedge j,j}(y)$, where $+_{i\wedge j}$ is the sum in $\Gamma_{i\wedge j}$;
                \item $x \cdot y=f_{i\wedge j,i}(x)\cdot_{i\wedge j}f_{i\wedge j,j}(y)$, where $\cdot_{i\wedge j}$ is the product in $\Gamma_{i\wedge j}$.
            \end{itemize}
            We start by showing that the operations are associative, i.e.\ that $M$ satisfies $(\PM_1)$ and $(\PM_5)$. 

            Let $x\in \Gamma_i$, $y\in \Gamma_j$ and $z\in \Gamma_k$. Then 
            \begin{align*}
                x+(y+z) &= x+(f_{j\wedge k, j}(y) + f_{j\wedge k, k}(z))\\
                &= f_{i\wedge j \wedge k,i}(x)+f_{i\wedge j \wedge k,j\wedge k}(f_{j\wedge k, j}(y) + f_{j\wedge k, k}(z))\\
                &=  f_{i\wedge j \wedge k,i}(x)+f_{i\wedge j \wedge k,j\wedge k}\circ f_{j\wedge k, j}(y) + f_{i\wedge j \wedge k,j\wedge k}\circ f_{j\wedge k, k}(z)\\
                &= f_{i\wedge j \wedge k,i}(x)+f_{i\wedge j \wedge k,j}(y) + f_{i\wedge j \wedge k,k}(z)\\
                &=  f_{i\wedge j \wedge k,i\wedge j}\circ f_{i\wedge j,i}(x)+f_{i\wedge j \wedge k,i\wedge j}\circ f_{i\wedge j,j}(y) 
                + f_{i\wedge j \wedge k,k}(z)\\
                & = f_{i\wedge j \wedge k,i\wedge j}( f_{i\wedge j,i}(x)+f_{i\wedge j,j}(y) )
                + f_{i\wedge j \wedge k,k}(z) = (x+y) + z.\\
            \end{align*}
            The proof that $\cdot$ is associative is analogous. 

            One easily sees that the operations $+$ and $\cdot$ are commutative, and so axioms $(\PM_2)$ and $(\PM_6)$ hold in $M$.

            Let $0_{M_0}$ and $1_{M_0}\in \Gamma_{M_0}$ be the zero and the identity of the ring $\Gamma_{M_0}$ respectively. In this case, we drop the index and write simply $0=0_{M_0}$ and $1=1_{M_0}$. We show that these elements are the zero and the identity of $M$, thus showing that $(\PM_3)$ and $(\PM_7)$ hold in $M$. For $x\in \Gamma_i$, and denoting the zero of $\Gamma_i$ by $0_i$ and its unit by $1_i$, we have
            \begin{align*}
                0+x&=f_{i,{M_0}}(0)+f_{i,i}(x) = 0_i+x = x, \text{ and}\\
                1\cdot x&=f_{i,{M_0}}(1) \cdot f_{i,i}(x) = 1_i \cdot x = x.
            \end{align*}
        
        Since $\Gamma_i$ is a ring, for every $x\in \Gamma_i$ there exists $-x\in\Gamma_i$. 
        Let us now see that $M$ satisfies axiom $(\PM_4)$. Let $x\in \Gamma_i$. Then
            $$x+(-x)=0_i=0_i\cdot x=f_{i,{M_0}}(0)\cdot f_{i,i}(x)=0 \cdot x.$$

        Hence $(\PM_4)$ holds. From the definition of $-x$ it follows immediately that $(\PM_9)$ holds.

        Finally, we check that the distributive law 
 $(\PM_8)$ holds in $M$. Let $x\in \Gamma_i$, $y\in \Gamma_j$ and $z\in \Gamma_k$. Then

            \begin{align*}
                x \cdot& (y+z)\\
                &= x\cdot (f_{j\wedge k, j}(y) + f_{j\wedge k, k}(z))\\ &= f_{i\wedge j \wedge k,i}(x)\cdot f_{i\wedge j \wedge k,j\wedge k}(f_{j\wedge k, j}(y) + f_{j\wedge k, k}(z))\\
                &= f_{i\wedge j \wedge k,i}(x)\cdot f_{i\wedge j \wedge k,j\wedge k}\circ f_{j\wedge k, j}(y) + f_{i\wedge j \wedge k,i}(x)\cdot f_{i\wedge j \wedge k,j\wedge k}\circ f_{j\wedge k, k}(z)\\
                &=f_{i\wedge j \wedge k,i}(x)\cdot f_{i\wedge j \wedge k,j}(y) + f_{i\wedge j \wedge k,i}(x)\cdot f_{i\wedge j \wedge k,k}(z)\\
                &=f_{i\wedge j \wedge k,i\wedge j}(f_{i\wedge j ,i}(x)\cdot f_{i\wedge j ,j}(y)) + f_{i\wedge j \wedge k,i\wedge k}(f_{i\wedge k ,i}(x)\cdot f_{i\wedge k ,k}(z))\\ &=x\cdot y+x\cdot z.\\
            \end{align*}
            Hence $M$ is a pre-meadow. 
            
            
            Note that for all $x\in \Gamma_i$, one has $0 \cdot x=f_{i,{M_0}}(0)\cdot f_{i,i}(0) = 0_i$. Then $0 \cdot M=\{0_i\mid i\in L\}$, and since $L$ is a lattice it follows that $0\cdot M$ also has the same order as $L$. It is straightforward to see that the order defined in Definition \ref{D:order} is the same order as $L$. Then, condition $3$ in Definition \ref{D:PreMeadowA} is verified. From Definition \ref{D:LatticeRings} it follows that conditions $1$ and $4$ are also verified. Recall that if $m=min(L)$, then $\Gamma_m$ is the zero ring. We shall denote the unique element of $\Gamma_m$ by $\abf$.
            
            Let us see that condition $2$ of Definition \ref{D:PreMeadowA} also holds. Let $x\in \Gamma_i$. Then
            $$x+\abf = f_{m,i}(x)+\abf = \abf.$$
            Hence $M$ is a pre-meadow with $\abf$.

            Finally, for $x\in \Gamma_i\subseteq M$ we have that if $j\in J_x$ then there exists an $\frac{1}{x}\in \Gamma_j$ such that $$x\cdot\frac{1}{x}=1_j=1_j+0_j \cdot\frac{1}{x}= 1+ 0\cdot \frac{1}{x}.$$

            Then $f_{j,M_0}(0)=0_j=0\cdot \frac{1}{x}\in I_x$.

            Conversely, we have that if $0\cdot z\in I_x$ is such that $x\cdot z = 1+ 0\cdot z$, where $0\cdot z = f_{j,M_0}(0)$ for some $j\in I$, then $f_{j,i}(x)$ is invertible with inverse $z\in \Gamma_j$. 

            Lastly note that $f_{j,M_0}(0)\cdot f_{j',M_0}(0)= f_{j,M_0}(0)$ if and only if $j\geq j'$. Then $I_x$ and $J_x$ are isomorphic as  partial ordered sets. We conclude that $I_x$ has a unique maximal element and, by Theorem \ref{C:precomon}, $M$ is a common meadow.
        \end{proof}

The following result is a consequence of the proof of Theorem~\ref{T:DirectedLattice}.

    \begin{corollary}\label{C:DirectedLattice}
    Let $L$ be a lattice and $\Gamma$ a directed lattice of rings over $L$. Then, there exists $M=\sqcup_{i\in I}\Gamma_i$, a pre-meadow with $\abf$ such that the lattice $0\cdot M$ is equivalent to $L$.
    \end{corollary}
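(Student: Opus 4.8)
The plan is to run the construction in the proof of Theorem~\ref{T:DirectedLattice} verbatim and simply stop one step earlier. Concretely, I would set $M=\bigsqcup_{i\in L}\Gamma_i$ and equip it with the same addition and multiplication defined through the transition maps and the meet, namely $x+y=f_{i\wedge j,i}(x)+_{i\wedge j}f_{i\wedge j,j}(y)$ and $x\cdot y=f_{i\wedge j,i}(x)\cdot_{i\wedge j}f_{i\wedge j,j}(y)$ for $x\in\Gamma_i$, $y\in\Gamma_j$. The first observation to make explicit is that none of the axiom verifications carried out in that proof—associativity $(\PM_1),(\PM_5)$, commutativity $(\PM_2),(\PM_6)$, the identities for $0$ and $1$ giving $(\PM_3),(\PM_7)$, the additive inverse law $(\PM_4)$, the involution $(\PM_9)$, and distributivity $(\PM_8)$—uses the hypothesis that $J_x$ has a unique maximal element. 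I would also record the short check of $(\PM_{10})$: for $x\in\Gamma_i$, $y\in\Gamma_j$, both $0\cdot(x+y)$ and $0\cdot x\cdot y$ equal $0_{i\wedge j}$, the zero of $\Gamma_{i\wedge j}$. Hence $M$ is a pre-meadow for an arbitrary directed lattice of rings.

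Next I would reproduce the identification $0\cdot M=\{0_i\mid i\in L\}$ and the verification that the order of Definition~\ref{D:order} on $0\cdot M$ coincides with the lattice order of $L$; this part of the argument is again independent of any condition on $J_x$. With $m=\min(L)$ and $\Gamma_m$ the zero ring, I would denote its unique element by $\abf$ and check, exactly as before, that $x+\abf=f_{m,i}(x)+\abf=\abf$ for every $x\in\Gamma_i$, and that $M_{\abf}=\{\abf\}$ is the unique singleton ring in the decomposition. These are precisely conditions $(1)$ and $(2)$ of Definition~\ref{D:PreMeadowA}, so $M$ is a pre-meadow with $\abf$, and $0\cdot M$ is order-isomorphic to $L$ as required.

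The whole point, and the reason this is merely a corollary, is that the hypothesis on $J_x$ is consumed only in the final paragraph of the proof of Theorem~\ref{T:DirectedLattice}, where it is used to transfer the unique-maximal-element property from $J_x$ to $I_x$ and thereby conclude, via Corollary~\ref{C:precomon}, that $M$ is \emph{invertible}, i.e.\ a common meadow. Since the present statement claims only a pre-meadow with $\abf$, that last step is exactly what we drop. There is essentially no obstacle here beyond bookkeeping: the only thing to be careful about is to confirm that the unique-maximal hypothesis genuinely appears nowhere in the pre-meadow-with-$\abf$ portion of the earlier proof, so that the weaker conclusion is obtained for every directed lattice of rings without restriction.
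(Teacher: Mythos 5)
Your proposal is correct and is essentially the paper's own argument: the paper states this corollary precisely as a consequence of the proof of Theorem~\ref{T:DirectedLattice}, namely that the construction of $M$ and the verification that it is a pre-meadow with $\abf$ whose lattice $0\cdot M$ is equivalent to $L$ never invoke the unique-maximal-element hypothesis on $J_x$, which is consumed only in the final invertibility step. Your explicit check of $(\PM_{10})$ (both sides equal $0_{i\wedge j}$) is a small welcome addition, since that axiom is left implicit in the theorem's proof.
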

 \begin{remark} 
        Let $M$ be a pre-meadow with $\abf$. For all $0\cdot z\in 0\cdot M$ we have $(0\cdot z)\cdot \abf = \abf$, that is, $\abf\leq 0\cdot z$. Then the lattice $0\cdot M$ has a minimum equal to $\abf$.
    \end{remark}
    
   We may use Theorem~\ref{T:DirectedLattice} to construct meadows, using the operations induced by a lattice. We present some examples of such constructions.
   
   \begin{exmp}
       Consider the lattice
       \[\begin{tikzcd}[ampersand replacement=\&]
	\& {F_0} \\
	{F_1} \&\& {F_2} \\
	\& {F_3} \\
	\& {\{\abf\}}
	\arrow["{f_{1,0}}"', from=1-2, to=2-1]
	\arrow["{f_{2,0}}", from=1-2, to=2-3]
	\arrow["{f_{3,1}}"', from=2-1, to=3-2]
	\arrow["{f_{3,2}}", from=2-3, to=3-2]
	\arrow[from=3-2, to=4-2]
	\arrow["{f_{3,0}}"', from=1-2, to=3-2]
\end{tikzcd}\]
where $F_0,F_1,F_2,F_3$ are fields and $f_{i,j}$ are ring homomorphisms from $F_j$ to $F_i$. Then by Theorem \ref{T:DirectedLattice} there is a common meadow $M=F_0\sqcup F_1\sqcup F_2\sqcup F_3\sqcup \{\abf\}$. Following the proof of Theorem \ref{T:DirectedLattice} we see that the product (and sum) of elements of $M$ that are in $F_0$ is just the product (and sum) defined in $F_0$. In order to calculate the product of an element $x\in F_1$ with another element $y\in F_2$ one first "sends" the elements to $F_3$, i.e.\ calculate $f_{3,1}(x)$ and $f_{3,2}(y)$, and then multiplies them in $F_3$. 

\end{exmp}     
    \begin{exmp}
        Consider the lattice

\[\begin{tikzcd}
	& \z \\
	& {} && {} \\
	\q && \q \\
	\\
	& {\{\abf\}}
	\arrow["i", hook, from=1-2, to=3-1]
	\arrow["i"', hook', from=1-2, to=3-3]
	\arrow[from=3-3, to=5-2]
	\arrow[from=3-1, to=5-2]
\end{tikzcd}\]
where $i:\z\rightarrow \q$ is the inclusion map. The previous diagram corresponds to the pre-meadow with $\abf$ of Example \ref{firstexmp} (3), so $M=\z\sqcup\q\sqcup\q\sqcup \{\abf\}$ with the operations defined by the lattice is a pre-meadow with $\abf$ that is not a common meadow. Note that otherwise there would be some ambiguity in choosing an inverse for, say, $2 \in \z$.  
\end{exmp}
\begin{exmp}

Consider the lattice 
\[\begin{tikzcd}
	\z \\
	\\
	\q \\
	\\
	{\{\abf\}}
	\arrow["i"', hook, from=1-1, to=3-1]
	\arrow[from=3-1, to=5-1]
\end{tikzcd}\]

The set $N=\z\sqcup \q \sqcup \{\abf\}$ is a common meadow, where the inverse of an element $m\in \z\setminus \{-1,0,1\}$ is $\frac{1}{m}\in\q$. Note that this example is the same as the common meadow  in Example \ref{firstexmp} (3).

Even though $\z\subseteq\q$, in the disjoint unions above these sets are "disjoint", i.e.\ there is another copy of $\z$ in $\q$.
    \end{exmp}
    We can also use common meadows in order to represent commutative diagrams as exemplified below.

\begin{exmp}
 Take the following commutative diagram of rings
    \[\begin{tikzcd}[ampersand replacement=\&]
	{R_1} \& {R_2} \& {R_3} \\
	{R'_1} \& {R'_2} \& {R'_3}
	\arrow["f", from=1-1, to=1-2]
	\arrow["g", from=1-2, to=1-3]
	\arrow["{f'}", from=2-1, to=2-2]
	\arrow["{g'}", from=2-2, to=2-3]
	\arrow["{\phi_1}"', from=1-1, to=2-1]
	\arrow["{\phi_2}"', from=1-2, to=2-2]
	\arrow["{\phi_3}"', from=1-3, to=2-3]
\end{tikzcd}\]

We can transform this commutative diagram into the following lattice

\[\begin{tikzcd}[ampersand replacement=\&]
	\& {\mathbb{Z}} \\
	\& {R_1} \\
	{R'_1} \&\& {R_2} \\
	\& {R'_2} \&\& {R_3} \\
	\&\& {R'_3} \\
	\&\& {\{\abf\}}
	\arrow["f"', from=2-2, to=3-3]
	\arrow["g", from=3-3, to=4-4]
	\arrow["{\phi_1}", from=2-2, to=3-1]
	\arrow["{f'}", from=3-1, to=4-2]
	\arrow["{g'}", from=4-2, to=5-3]
	\arrow[from=5-3, to=6-3]
	\arrow["{\phi_3}", from=4-4, to=5-3]
	\arrow["{\phi_2}", from=3-3, to=4-2]
	\arrow["u", from=1-2, to=2-2]
\end{tikzcd}\]
where the map $u:\z\rightarrow R_1$ is the map $u(m)=m\cdot 1_{R_1}$. 

As mentioned before, for the directed lattice to respect the condition of Theorem~\ref{T:DirectedLattice} there can be no ambiguity in the choice of the inverse. For example, an element of $R_1$ that is not invertible, cannot be invertible in both $R'_1$ and $R_2$. Provided that this restriction is satisfied, Theorem \ref{T:DirectedLattice} ensures that there is a common meadow associated with the lattice above. 
\end{exmp}

\begin{exmp}
    Consider the following directed lattices, where the homomorphisms are the natural homomorphisms and $p$ is a prime number. 

    \[\begin{tikzcd}
	& {\mathbb{Z}_p[x]} &&& {\mathbb{Z}_p\times \mathbb{Z}_p\times \mathbb{Z}_p} \\
	{\mathbb{Z}_p[x,y]} && {\mathbb{Z}_p[x,z]} && {\mathbb{Z}_p\times \mathbb{Z}_p} \\
	& {\mathbb{Z}_p[x,y,z]} &&& {\mathbb{Z}_p} \\
	& {\{\abf\}} &&& {\{\abf\}}
	\arrow[from=1-2, to=2-1]
	\arrow[from=1-2, to=2-3]
	\arrow[from=2-3, to=3-2]
	\arrow[from=2-1, to=3-2]
	\arrow[from=3-2, to=4-2]
	\arrow[from=1-5, to=2-5]
	\arrow[from=2-5, to=3-5]
	\arrow[from=3-5, to=4-5]
\end{tikzcd}\]

 It is easy to see that these diagrams define meadows. From these meadows one can create a new meadow by gluing the directed lattices as follows
\[\begin{tikzcd}
	&&& {\mathbb{Z}_p} \\
	& {\mathbb{Z}_p[x]} &&& {\mathbb{Z}_p\times \mathbb{Z}_p\times \mathbb{Z}_p} \\
	{\mathbb{Z}_p[x,y]} && {\mathbb{Z}_p[x,z]} && {\mathbb{Z}_p\times \mathbb{Z}_p} \\
	& {\mathbb{Z}_p[x,y,z]} &&& {\mathbb{Z}_p} \\
	&&& {\{\abf\}}
	\arrow[from=2-2, to=3-1]
	\arrow[from=2-2, to=3-3]
	\arrow[from=3-3, to=4-2]
	\arrow[from=3-1, to=4-2]
	\arrow[from=2-5, to=3-5]
	\arrow[from=3-5, to=4-5]
	\arrow[from=1-4, to=2-5]
	\arrow[from=1-4, to=2-2]
	\arrow[from=4-5, to=5-4]
	\arrow[from=4-2, to=5-4]
\end{tikzcd}\]
\end{exmp}
Inspired by the previous example, we have a new process of creating meadows by joining two existing meadows (note that if we take an $x$ in the "top" $\z_p$ and it is different from zero it is invertible in $\z_p$, and if we take $x$ outside of $\z_p$, then $J'_x$ in this new lattice is equal to $J_x$ in the old lattice and then it satisfies the condition of meadow).  Given two meadows $M$ and $N$, such that $p\cdot 1_M=0_M$ and $p\cdot 1_N=0_N$, where $p$ is a prime number, we can define a new meadow $M+_{\z_p} N := M\sqcup N\setminus\{\abf\}\sqcup \z_p$, where the corresponding lattice is given by joining the maximum of both lattices with the (unique) homomorphism $f:\z_p\rightarrow M_0$ and the (unique) homomorphism $g:\z_p\rightarrow N_0$. It is easy to see that such construction gives rise to a meadow, where for $x\in M$ and $y\in N$ we have $x+y=x\cdot y=\abf$. Note that here we identify $\abf_M$ and $\abf_N$ with a unique $\abf$. 

Recall that if $M$ is a common meadow, then the set $0\cdot M$ has a partial order defined by $0\cdot x\leq 0\cdot y$ if $0 \cdot x\cdot y = 0\cdot x$. And the partial order is \emph{total} if and only if for all $x,y\in M$ we have $0\cdot x\cdot y = 0\cdot x$ or $0\cdot x\cdot y = 0\cdot y$. The following proposition summarizes this discussion.

\begin{proposition}
    Let $M$ be a common meadow. Then the partial order in $0\cdot M$ is a total order if and only if the assembly $(M,+,e,s)$ (with $e$ and $s$ defined as in Proposition \ref{P:Assembly}) is a strong assembly.
\end{proposition}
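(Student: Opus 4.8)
The plan is to reduce both directions of the equivalence to a single arithmetic condition on $0\cdot M$. Since Proposition \ref{P:Assembly} already guarantees that $(M,+,e,s)$ is an assembly — so that $(A_1)$ and $(A_2)$ hold automatically — the structure is a strong assembly precisely when the weakened axiom $(A_3')$ holds. Because the semigroup operation of this assembly is $+$, axiom $(A_3')$ unwinds, using $e(x)=0\cdot x$, to the statement that for all $x,y\in M$,
$$0\cdot(x+y)=0\cdot x \quad\text{or}\quad 0\cdot(x+y)=0\cdot y.$$
The first step is thus simply to record that being a strong assembly is equivalent to this disjunction.

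Next I would rewrite the left-hand terms using the pre-meadow identity $0\cdot(x+y)=0\cdot x\cdot y$ (axiom $(\PM_{10})$, valid in every common meadow). After substitution the condition becomes: for all $x,y\in M$,
$$0\cdot x\cdot y=0\cdot x \quad\text{or}\quad 0\cdot x\cdot y=0\cdot y.$$

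Finally, I would invoke the characterization recalled immediately before the statement: the partial order on $0\cdot M$ is total exactly when, for all $x,y\in M$, one has $0\cdot x\cdot y=0\cdot x$ or $0\cdot x\cdot y=0\cdot y$. This is word-for-word the condition obtained in the previous step, so the two properties coincide and the equivalence follows in both directions at once.

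I do not expect any genuine obstacle here; the entire content is the identity $0\cdot(x+y)=0\cdot x\cdot y$, which converts the additive statement $(A_3')$ into the multiplicative description of comparability in $0\cdot M$. The only points requiring care are bookkeeping: remembering that the assembly operation is addition (so that the ``$xy$'' appearing in $(A_3')$ means $x+y$) and that $e$ is $0\cdot(-)$, and checking that the comparability relation $0\cdot x\le 0\cdot y$, defined by $(0\cdot x)\cdot(0\cdot y)=0\cdot x$, is indeed governed by $0\cdot x\cdot y$ — which follows at once from $(0\cdot x)\cdot(0\cdot y)=0\cdot(x\cdot y)=0\cdot x\cdot y$ together with $0\cdot 0=0$.
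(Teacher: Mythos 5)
Your proposal is correct and follows essentially the same route as the paper, which states the proposition as a summary of the preceding discussion: totality of the order on $0\cdot M$ is unwound to the condition $0\cdot x\cdot y=0\cdot x$ or $0\cdot x\cdot y=0\cdot y$, and $(A_3')$ for the additive assembly is converted to the same condition via $(\PM_{10})$. Your explicit bookkeeping (that $(A_1)$, $(A_2)$ come for free from Proposition \ref{P:Assembly}, and that $(0\cdot x)\cdot(0\cdot y)=0\cdot x\cdot y$) just makes the paper's implicit argument fully precise.
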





\section{Algebraic constructions on Meadows}\label{S:Algebraicconstructions}

By relating common meadows with lattices, Theorem \ref{T:DirectedLattice} promptly allows to extend the usual algebraic notions to the context of common meadows. In this section we make this more explicit for the notions of homomorphism, ideals, kernels and isomorphisms. 

\subsection{Homomorphisms of Meadows}

In analogy with the similar notion in rings, an \emph{homomorphism of common meadows} is a map which is linear for both addition and multiplication, and maps the element $1_M$ to $1_{N}$. 
\begin{definition}\label{D:Morphism}
    Let $f:M\rightarrow N$ be a function. We say that $f$ is an \emph{homomorphism of (common) meadows} if $M,N$ are common meadows and for all $x,y\in M$
    \begin{enumerate}
        \item $f(x+y)=f(x)+f(y)$.
        \item $f(x\cdot y)=f(x) \cdot f(y)$.
        \item $f(1_M)=1_{N}$.        
    \end{enumerate}
\end{definition}

\begin{exmp}\label{E:RingHom}
    Let $R_0$ and $R_1$ be commutative unital rings and let $f:R_0\rightarrow R_1$ be a ring homomorphism. Then, as in Example \ref{firstexmp} $(1)$, we can define the meadows $M=R_0\sqcup\{\abf\}$ and $N=R_1\sqcup\{\abf\}$. One easily sees that the map $\bar{f}:M\rightarrow N$ defined by $\bar{f}(x)=f(x)$ if $x\in R_0$ and $\bar{f}(\abf)=\abf$ is an homomorphism of meadows.
\end{exmp}

     In Example \ref{E:RingHom},
in the particular case where $R_0=\z$, $R_1=\q$ and $f$ is the inclusion homomorphism of $\z$ into $\q$, the common meadow homomorphism does not commute with the inverse. To see this take, for example, $5\in\z$ whose inverse in $M$ is $\abf$, while the inverse of $5$ in $N$ is $\frac{1}{5}$.
For this reason, in the definition of homomorphism of meadows we are not requiring the inverse to commute with the function, i.e.\ that $f(x\inv)=f(x)\inv$.


    
 The following proposition summarizes some basic results concerning homomorphisms of meadows.
\begin{proposition}\label{P:f(0)_f(a)}
    Let $f:M\rightarrow N$ be an homomorphism  of meadows. Then
    \begin{enumerate}
        \item $f(0_M)=0_{N}$.
        \item $f(\abf_M)=\abf_{N}$.
        \item $f(-1_M)=-f(1_M)$.
        \item If $f(x)=\abf_{N}$, then  $f(x+y)=f(x \cdot y)=\abf_{N}$, for all $x,y\in M$.
        \item The following are equivalent:
            \begin{enumerate}
                \item If $f(x)=1_{N}$ then $x = 1_{M}$.
                \item If $f(x)=0_{N}$ then $x = 0_{M}$.
            \end{enumerate}
        \item Condition 5 of Definition~\ref{D:Morphism} entails that $f(1_M)=1_N$ is equivalent to $f(0_M)=0_N$.
    \end{enumerate}
\end{proposition}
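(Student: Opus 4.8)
The plan is to reduce every assertion to arithmetic inside a single ring $N_z$, exploiting that $N=\bigsqcup_{z\in 0\cdot N}N_z$ (Theorem~\ref{T:union}) and that the operator $0\cdot(-)$ records which summand an element belongs to, through the identity $0\cdot(x+y)=0\cdot x\cdot y$. The recurring move will be: to prove an equality $u=v$, first hit the relevant relation with $0\cdot(-)$ so as to force $u$ and $v$ into the same ring $N_z$, and then finish inside that ring, where uniqueness of additive inverses and cancellation are available.

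For item~(1) I would apply $f$ to $1_M=1_M+0_M$, obtaining $1_N=1_N+f(0_M)$; since $f(0_M)=f(0_M+0_M)=f(0_M)+f(0_M)$ is an additive idempotent, we have $f(0_M)\in 0\cdot N$, and applying $0\cdot(-)$ to $1_N+f(0_M)=1_N$ collapses it to $f(0_M)=0_N$. Item~(2) runs in parallel, starting from the absorption law $1_M+\abf_M=\abf_M$: applying $f$ gives $1_N+f(\abf_M)=f(\abf_M)$ with $f(\abf_M)\in 0\cdot N$, so $f(\abf_M)$ is the zero and $1_N+f(\abf_M)$ the unit of the ring $N_{f(\abf_M)}$ (Theorem~\ref{T:union}); unit equal to zero forces this ring to be trivial, and the uniqueness clause of Definition~\ref{D:PreMeadowA} identifies it with $\abf_N$. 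For item~(3) I would apply $f$ to $1_M+(-1_M)=0\cdot 1_M=0_M$ and use~(1) to get $1_N+f(-1_M)=0_N$; since $0\cdot f(-1_M)=f(0_M)\cdot f(-1_M)=f(0_M)=0_N$ places $f(-1_M)$ in $N_0$, both $f(-1_M)$ and $-1_N$ are additive inverses of $1_N$ inside $N_0$ and therefore coincide.

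Items~(4) and~(5) are then short. For~(4), the hypothesis $f(x)=\abf_N$ yields $f(x+y)=\abf_N+f(y)=\abf_N$ and $f(x\cdot y)=\abf_N\cdot f(y)=\abf_N$ directly from the absorption identities $\abf_N+w=\abf_N$ and $\abf_N\cdot w=\abf_N$ (Proposition~\ref{P:Identities}). For~(5), to prove (a)$\Rightarrow$(b) I would observe that $f(x)=0_N$ gives $f(1_M+x)=1_N$, apply~(a) to obtain $1_M+x=1_M$, and cancel by adding $-1_M$ and simplifying via $x+0_M=x$ to reach $x=0_M$; the implication (b)$\Rightarrow$(a) is symmetric, passing from $f(x)=1_N$ to $f(x+(-1_M))=0_N$ using~(3), applying~(b), and adding $1_M$ back.

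The main obstacle is item~(6), where the real content lies in the role played by the injectivity hypothesis of~(5). One direction is immediate: $f(1_M)=1_N$ forces $f(0_M)=0_N$ by~(1), with nothing extra needed. The reverse is the delicate one, and the plan is to note first that $f(1_M)$ always lands in $N_0$ and is idempotent (from $0\cdot f(1_M)=f(0_M)\cdot f(1_M)=f(0_M)$ and $f(1_M)\cdot f(1_M)=f(1_M)$), so that the task reduces to excluding a proper idempotent of $N_0$ as the value of $f(1_M)$ once $f(0_M)=0_N$ is assumed; the injectivity supplied by~(5) is precisely what should rule such idempotents out. Pinning down exactly which form of the injectivity hypothesis is required here, and checking that it genuinely forbids the stray idempotents, is the step I expect to demand the most care.
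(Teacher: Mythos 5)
Your items (1)--(5) are correct and in substance follow the paper's own proof: (1) applies $f$ to $1_M+0_M=1_M$, and (3), (4), (5) are the same computations the paper carries out (your extra observation that an additive idempotent lies in $0\cdot N$, and your appeal to uniqueness of additive inverses inside the ring $N_0$ in (3), are sound). The one genuine variation is item (2): the paper uses the inverse axioms --- by $(\M_{12})$ the element $f(\abf_M)=1_N+0_N\cdot f(\abf_M)$ is fixed by $(\cdot)\inv$, and then $(\M_{11})$ and $(\M_{13})$ give $f(\abf_M)=\abf_N\cdot f(\abf_M)=\abf_N$ --- whereas you use Theorem~\ref{T:union} together with the uniqueness clause of Definition~\ref{D:PreMeadowA}: from $1_N+f(\abf_M)=f(\abf_M)$ the component $N_{f(\abf_M)}$ is a ring whose unit equals its zero, hence trivial, hence equal to $\{\abf_N\}$. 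Both are valid; yours has the mild advantage of never invoking the inverse operation, so it already works for pre-meadows with $\abf$.

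Item (6), however, is a genuine gap, and not merely because you explicitly leave its key step open. The strategy you sketch --- read ``Condition 5'' as the injectivity hypothesis of item (5), assume only additivity, multiplicativity and $f(0_M)=0_N$, and use injectivity to exclude a proper idempotent of $N_0$ as the value of $f(1_M)$ --- cannot be completed, because the statement it targets is false. Take $M=\z\sqcup\{\abf\}$ and $N=(\z\times\z)\sqcup\{\abf\}$ (meadows as in Example~\ref{firstexmp}(2)), and define $f(x)=(x,0)$ for $x\in\z$, $f(\abf)=\abf$. This map satisfies conditions (1) and (2) of Definition~\ref{D:Morphism} and $f(0_M)=(0,0)=0_N$; it also satisfies (5)(b) (only $0_M$ maps to $(0,0)$) and satisfies (5)(a) vacuously, since $1_N=(1,1)$ is not in the image --- yet $f(1_M)=(1,0)\neq 1_N$. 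So no form of the injectivity condition rules out the stray idempotent. The paper's proof of (6) goes a different way: it never mentions injectivity, but computes $0_N=f(0_M)=f(1_M-1_M)=f(1_M)+f(-1_M)=f(1_M)-1_N$ (so the missing ``Condition 5'' is being used to supply $f(-1_M)=-1_N$), and then concludes $f(1_M)=1_N$ by adding $1_N$; note that in the counterexample above $f(-1_M)=(-1,0)\neq -1_N$, consistent with this reading. In fairness, the reference to ``Condition 5 of Definition~\ref{D:Morphism}'' dangles in the paper itself (the definition has only three conditions), so the item is ambiguous as stated; but under the interpretation you chose it is provably false, which is exactly why your plan stalls.
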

\begin{proof}

We start by noting that $1_M-1_M=0_M\cdot 1_M=0_M$,  by $(\PM_7)$.
    \begin{enumerate}
        \item We have $f(1_M)=f(1_M+0_M)=f(1_M)+f(0_M)$.
        Since $f(1_M)=1_N$ we have $1_N=1_N+f(0_M)$ and hence $f(0_M)=0_N$.
        \item First note that $\abf_M=1_M+0_M\cdot \abf_M$ and $\abf_M=0_M\cdot \abf_M$. Then  $f(\abf_M)=1_N+0_N\cdot f(\abf_M)$ and $f(\abf_M)=0_N\cdot f(\abf_M)$. By $(\M_{12})$ we have that $f(\abf_M)\inv =(1_N+0_N\cdot f(\abf_M))\inv = 1_N+0_N\cdot f(\abf_M) = f(\abf_M) $. Hence $f(\abf_M)=f(\abf_M)\inv = (0_N\cdot f(\abf_M))\inv = 0_N\inv \cdot f(\abf_M)\inv = \abf_{N}\cdot f(\abf_M)=\abf_{N}$.
        \item We have $0_N=f(1_M-1_M)=f(1_M)+f(-1_M)$, then $-f(1_M)=f(-1_M)+0_N\cdot f(1_M)=f(-1_M)+ 0_N \cdot 1_N=f(-1_M)$. 
        \item We have $f(x+y)=f(x)+f(y)=\abf_{N}+f(y)=\abf_{N}=\abf_{N} \cdot f(y)=f(x)\cdot f(y)=f(x\cdot y)$.
        \item Assume $(a)$. If $f(x)=0_N$ we have that $f(x+1_M)=f(x)+f(1_M)=f(x)+1_N=1_N$ and so, $x+1_M=1_M$ and therefore $x=0_M$. Hence $(b)$ holds. 
        Assume now $(b)$. If $f(x)=1_N$ we have that $f(x-1_M)=f(x)+f(-1_M)=f(x)-1_N=1_N-1_N=0_N$ and so, $x-1_M=0_M$ and therefore $x=1_M$. Hence $(a)$ holds.
        \item By part 1 it is enough to show the implication from right to left. If $f(0_M)=0_N$, then $0_N=f(0_M)=f(1_M-1_M)=f(1_M)+f(-1_M)=f(1_M)-f(1_M)=f(1_M)-1_N$ and hence $f(1_M)=1_N$.\qedhere
    \end{enumerate}
\end{proof}
In the following we shall drop the subscripts whenever there is no ambiguity.

It is now straightforward to check that the homomorphic image of a common meadow is again a common meadow. It is also straightforward to check that the Cartesian product of meadows is again a meadow.

We now give some examples of homomorphisms of meadows, taking advantage of Theorem \ref{T:DirectedLattice}.
\begin{exmp}
    Consider the following lattices

\[\begin{tikzcd}
	{R_0} &&& {T_0} \\
	{R_1} && {T_1} && {T_2} \\
	{\{\abf\}} &&& {T_3} \\
	&&& {\{\abf\}} 
	\arrow["\Id", from=1-4, to=2-3]
	\arrow["\Id", from=2-3, to=3-4]
	\arrow["\Id"', from=1-4, to=2-5]
	\arrow["\Id"', from=2-5, to=3-4]
	\arrow[from=3-4, to=4-4]
	\arrow[from=2-1, to=3-1]
	\arrow[hook,"i"', from=1-1, to=2-1]
\end{tikzcd}\]

\noindent where $R_0=\z$ and $R_1=T_0=T_1=T_2=T_3=\q$,  $i:R_0\rightarrow R_1$ is the inclusion homomorphism, and $\Id$ is the identity map. Denote by $M$ the common meadow defined by the lattice on the left and $N$ the common meadow defined by the lattice on the right. And define the map $f:M\rightarrow N$ in the following way: $x\in R_0=\z$ corresponds to the same integer in $T_0$, that is $f(x)=x\in T_0=\q$, and $x\in R_1$  is sent to $f(x)=x\in T_1=\q$. One can easily see that $f$ is an homomorphism of meadows whose image is the common meadow that corresponds to the lattice
\[\begin{tikzcd}
	\z \\
	\q \\
	{\{\abf\}}
	\arrow[from=2-1, to=3-1]
	\arrow[hook,"i"', from=1-1, to=2-1]
\end{tikzcd}\]
\end{exmp}

\begin{exmp}
    Let $R_0,R_1,R_2$ be unital commutative rings and $f_1,f_2$ be ring homomorphisms, such that they defined a meadow $M$. Consider the following lattice
    \[\begin{tikzcd}
	& {R_0} \\
	{R_1} && {R_2} \\
	& {\{\abf\}}
	\arrow[from=2-1, to=3-2]
	\arrow[from=2-3, to=3-2]
	\arrow["{f_{1}}"', from=1-2, to=2-1]
	\arrow["{f_{2}}", from=1-2, to=2-3]
\end{tikzcd}\]
which corresponds to a meadow $M$. From $M$ one can construct the lattice
\[\begin{tikzcd}
	& {R_0\times R_0} \\
	{R_1\times R_1} && {R_2\times R_2} \\
	& {\{\abf\}}
	\arrow[from=2-1, to=3-2]
	\arrow[from=2-3, to=3-2]
	\arrow["{(f_{1},f_{1})}"', from=1-2, to=2-1]
	\arrow["{(f_{2},f_{2})}", from=1-2, to=2-3]
\end{tikzcd}\]
One easily sees that the lattice above defines a meadow, let us denote by $N$ such meadow. Note that $N$ is not the product of $M$ with $M$ because, for example, it does not contain $R_0\times R_1$. Define the map
$\pi_1:M\rightarrow N$ by $\pi_1(x,y)=x\in R_i$ for $(x,y)\in R_i\times R_i$. It is easy to see that $\pi_1$ defines an homomorphism of meadows that does not commute with the inverse (the inverse of $(0,1)$ is $\abf$, while $1$ is its own inverse).
\end{exmp}

\begin{exmp}
    Consider the lattice
    \[\begin{tikzcd}
	{R_0} \\
	{R_1} \\
	{\{\abf\}}
	\arrow[from=2-1, to=3-1]
	\arrow["f", from=1-1, to=2-1]
\end{tikzcd}\]
where $R_0$ and $R_1$ are unital commutative rings an $f$ is a ring homomorphism. Let $M$ be the common meadow corresponding to this lattice and let $N:=M\times M$. It is easy to see that $(0,0)\cdot N=0\cdot M\times 0\cdot M$. Then the lattice of $N$ corresponds to 

\[\begin{tikzcd}
	& {R_0\times R_0} \\
	{R_0\times R_1} && {R_1\times R_0} \\
	& {R_1\times R_1} \\
	{\{\abf\}\times R_1} && {R_1\times\{\abf\}} \\
	& {\{\abf\}}
	\arrow["{(f,\Id)}"', from=2-1, to=3-2]
	\arrow["{(\Id,f)}", from=2-3, to=3-2]
	\arrow["{(\Id,f)}"', from=1-2, to=2-1]
	\arrow["{(f,\Id)}", from=1-2, to=2-3]
	\arrow[from=3-2, to=4-1]
	\arrow[from=3-2, to=4-3]
	\arrow[from=4-1, to=5-2]
	\arrow[from=4-3, to=5-2]
\end{tikzcd}\]

One can easily see that the projection map  $\pi_1:N\rightarrow M$ is an homomorphism. Moreover, and similarly to what happens in ring theory, there is no injective homomorphism $h:M\rightarrow N$.
\end{exmp}


Every homomorphism of common meadows $f:M\rightarrow N$ can be restricted to an homomorphism of semigroups $f_0=f_{|0\cdot M}:0\cdot M\rightarrow 0\cdot N$ which preserves the partial order in $0\cdot M$. Moreover, for $y\in M_{0\cdot x}$ we have that $f(y)\cdot 0=f(y\cdot 0)=f(x\cdot 0)=f(x)\cdot 0$. That is, $f$ maps the elements in $M_{0\cdot x}$ to $M_{0\cdot f(x)}$. Hence, by restricting to $M_{0\cdot x}$ we have a ring homomorphism $f_{M_{0\cdot x}}=f_{0\cdot x}:M_{0\cdot x}\rightarrow M_{0\cdot f(x)}$. The following proposition summarizes this discussion.

\begin{proposition}\label{P:MorphismLattice}
         If $f:M\rightarrow N$ is an homomorphism of meadows then: 
     \begin{enumerate}
         \item The map $f_L:0\cdot M\rightarrow 0\cdot N$ is an homomorphism of lattices.
         \item The map $f_{0\cdot z}:M_{0\cdot z}\rightarrow M_{0\cdot f(z)}$ is an homomorphism of rings.
     \end{enumerate}
\end{proposition}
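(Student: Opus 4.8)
The plan is to verify the two claims essentially by unwinding the definitions, since almost all of the necessary work has already been done in the discussion immediately preceding the statement. The key observation is that a homomorphism of meadows $f\colon M\to N$ respects the zero-multiplication operation, because for any $x\in M$ one has $f(0_M\cdot x)=f(0_M)\cdot f(x)=0_N\cdot f(x)$ by multiplicativity together with part (1) of Proposition~\ref{P:f(0)_f(a)}. This single identity is what lets $f$ descend to a well-defined map on both the index lattice $0\cdot M$ and on each fibre ring $M_{0\cdot z}$.

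For part (1), I would set $f_L=f_{|0\cdot M}$ and first check that it lands in $0\cdot N$: indeed $f(0\cdot x)=0\cdot f(x)\in 0\cdot N$. To show it is a lattice homomorphism I must show it preserves both meet and join. Recall from Proposition~\ref{P:0Dmonoid} that on $0\cdot M$ the lattice operations are realized algebraically: the meet of $0\cdot x$ and $0\cdot y$ is their product $0\cdot x\cdot y=0\cdot(x+y)$ (using the identity $0\cdot(x+y)=0\cdot x\cdot y$ noted before Corollary~\ref{C:precomon}), and the join is the addition $0\cdot x+0\cdot y$ coming from Proposition~\ref{P:order}. Since $f$ is both additive and multiplicative, it commutes with each of these operations: $f_L(0\cdot x\cdot y)=f_L(0\cdot x)\cdot f_L(0\cdot y)$ and $f_L(0\cdot x+0\cdot y)=f_L(0\cdot x)+f_L(0\cdot y)$. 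Because meet and join in $0\cdot N$ are given by the same algebraic formulas, this shows $f_L$ preserves both, hence is a lattice homomorphism. Order-preservation, already noted in the preceding paragraph, is then automatic.

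For part (2), the map $f_{0\cdot z}$ is the restriction of $f$ to the fibre $M_{0\cdot z}=\{x\in M\mid 0\cdot x=0\cdot z\}$. The well-definedness of its target is exactly the computation recorded just before the statement: for $y\in M_{0\cdot z}$ we have $0\cdot f(y)=f(0\cdot y)=f(0\cdot z)=0\cdot f(z)$, so $f(y)\in M_{0\cdot f(z)}$. It remains to verify that $f_{0\cdot z}$ is a ring homomorphism between the rings furnished by Theorem~\ref{T:union}. Additivity and multiplicativity are inherited directly from the corresponding properties of $f$. The only subtlety is that $f_{0\cdot z}$ must send the unit $1+0\cdot z$ of $M_{0\cdot z}$ to the unit $1+0\cdot f(z)$ of $M_{0\cdot f(z)}$; this follows from $f(1+0\cdot z)=f(1_M)+f(0\cdot z)=1_N+0\cdot f(z)$, using property (3) of Definition~\ref{D:Morphism} and part (1) of the present proposition.

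I do not expect a genuine obstacle here, as the statement is a repackaging of facts already established. The one point deserving care is matching the abstract lattice operations against their concrete algebraic descriptions on $0\cdot M$ and $0\cdot N$: one must be sure that the meet (product) and join (sum) really are preserved, rather than merely the induced order, since preservation of order alone does not make a monotone map a lattice homomorphism. Once the identifications of meet with $\cdot$ and join with $+$ are in place, both parts reduce to the defining linearity properties of $f$.
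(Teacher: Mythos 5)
Your part (2) is correct and is precisely the paper's argument: the computation $0\cdot f(y)=f(0\cdot y)=f(0\cdot z)=0\cdot f(z)$ shows that $f$ carries the fibre $M_{0\cdot z}$ into the fibre of $0\cdot f(z)$, and additivity, multiplicativity and $f(1+0\cdot z)=1+0\cdot f(z)$ make the restriction a ring homomorphism.

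The gap is in part (1), and it sits exactly at the point you yourself flagged as delicate. On $0\cdot M$ the sum and the product are the \emph{same} operation: $0\cdot x+0\cdot y=0\cdot(x+y)=0\cdot x\cdot y$ by $(\PM_8)$ and $(\PM_{10})$; this is the content of Proposition~\ref{P:0Dmonoid}, which literally asserts $(0\cdot P,+)=(0\cdot P,\cdot)$. Consequently the element you call the join of $0\cdot x$ and $0\cdot y$ is their meet, and your argument verifies meet preservation twice while establishing nothing about joins. This cannot be repaired, because join preservation is false in general. For instance, let $M$ be the meadow obtained (via Theorem~\ref{T:DirectedLattice}) from the diamond lattice with top $0$, two incomparable elements $z_1,z_2$, and bottom $\abf$, all fibres equal to $\z_2$ and all transition maps the identity; let $N$ be obtained from the lattice with top $0$, then a single element $w$ below it, then two incomparable elements $w_1,w_2$ below $w$, then $\abf$, again with $\z_2$ fibres and identity maps. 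The map sending $M_0$ to $N_0$ and $M_{z_i}$ to $N_{w_i}$ identically (and $\abf$ to $\abf$) is a homomorphism of meadows, yet $f_L(z_1\vee z_2)=f_L(0)=0$ while $f_L(z_1)\vee f_L(z_2)=w_1\vee w_2=w\neq 0$. So the only tenable reading of the statement --- and the only thing the paper's own preceding discussion establishes --- is that $f_L$ is a homomorphism of the idempotent commutative semigroup $(0\cdot M,\cdot)$, i.e.\ of meet-semilattices, and is in particular order preserving. Your computation does prove that; the claim that the sum realizes the join should simply be deleted, since joins need not be preserved at all.
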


The following result gives a relation between homomorphisms of directed lattices and homomorphisms of common meadows, as to be expected by the relation between common meadows and lattices given by Theorem \ref{T:DirectedLattice}. 

\begin{proposition}\label{P:LatticeHom}
    Let $M$ and $N$ be common meadows, and let $f:M\rightarrow N$ be a function such that for all $x,y \in M$
    \begin{enumerate}
        \item $f(0\cdot x)=0\cdot f(x)$
        \item $f(0\cdot x\cdot y)=0\cdot f(x)\cdot f(y)$
        \item $f_{|M_{0\cdot z}}:M_{0\cdot z}\rightarrow M_{0\cdot f(z)}$ is a ring homomorphism for all $z\in M$.
        \item $f(x+0\cdot z)=f(x)+0\cdot f(z)$
    \end{enumerate}
    Then $f$ is an homomorphism of meadows.
\end{proposition}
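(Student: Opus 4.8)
The plan is to exploit the structure theorem (Theorem~\ref{T:union}), which writes $M=\bigsqcup_{z}M_z$ as a disjoint union of rings, together with the transition maps of Proposition~\ref{P:Transitionmaps}, to reduce the verification of additivity and multiplicativity of $f$ to a computation inside a \emph{single} ring $M_w$, where hypothesis (3) is directly available. The compatibility with the transition maps will then be supplied by hypotheses (1), (2) and (4). Throughout I use that on each $M_z$ the meadow operations restrict to the ring operations, and that the canonical map $M_{z'}\to M_z$ for $z\le z'$ is $x\mapsto x+z$.

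The first step is a reduction identity. Fix $x,y\in M$ and set $w:=0\cdot x\cdot y$. By $(\PM_{10})$ and $(\PM_5)$ one has $w=0\cdot(x+y)=0\cdot(x\cdot y)=(0\cdot x)\cdot(0\cdot y)$, so both $x+y$ and $x\cdot y$ lie in $M_w$, and $w\le 0\cdot x$, $w\le 0\cdot y$. Using that the elements of $0\cdot M$ are idempotent (Proposition~\ref{P:0Dmonoid}) one checks $0\cdot(x+w)=0\cdot x+w=w$, so $x+w\in M_w$, and likewise $y+w\in M_w$. Since $x\cdot w=0\cdot(x\cdot x)\cdot y=0\cdot x\cdot y=w$ by Proposition~\ref{P:pre-meadowres}(7), and similarly $y\cdot w=w$, $w\cdot w=w$, $w+w=w$, a short computation in the ring $M_w$ yields
\[ x+y=(x+w)+(y+w),\qquad x\cdot y=(x+w)\cdot(y+w). \]
Thus every sum and product in $M$ is realized by an operation inside the ring $M_w=M_{0\cdot(x\cdot y)}$ performed on the transported elements $x+w$ and $y+w$.

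Now I would apply $f$. Since $M_w=M_{0\cdot(x\cdot y)}$, hypothesis (3) with $z=x\cdot y$ says $f|_{M_w}$ is a ring homomorphism, so $f$ splits over the operations of $M_w$. Next, using (4) with $z=x\cdot y$ together with the identity $0\cdot f(x\cdot y)=0\cdot f(x)\cdot f(y)$ (which follows from (1) applied to $x\cdot y$ and (2), since $0\cdot(x\cdot y)=0\cdot x\cdot y$), one obtains $f(x+w)=f(x)+w'$ and $f(y+w)=f(y)+w'$, where $w':=0\cdot f(x)\cdot f(y)=f(w)$. Finally, applying the reduction identity of the previous paragraph inside $N$ to the pair $f(x),f(y)$, for which $w'$ plays the role of $w$, gives $f(x)+f(y)=(f(x)+w')+(f(y)+w')$ and $f(x)\cdot f(y)=(f(x)+w')\cdot(f(y)+w')$. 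Chaining these equalities produces $f(x+y)=f(x)+f(y)$ and $f(x\cdot y)=f(x)\cdot f(y)$, i.e.\ conditions (1) and (2) of Definition~\ref{D:Morphism}; the remaining condition $f(1_M)=1_N$ is the routine part, coming from (3) applied to the top ring $M_0$ (take $z=1_M$), since a ring homomorphism preserves the multiplicative identity.

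The hard part is not any single computation but the bookkeeping that ties the two sides together: one must show that the transition map $x\mapsto x+w$ on the $M$-side is carried by $f$ to the corresponding transition map $f(x)\mapsto f(x)+w'$ on the $N$-side. This is exactly what hypothesis (4) encodes, while the matching of the indices $w=0\cdot(x\cdot y)$ and $w'=0\cdot f(x)\cdot f(y)=f(w)$ is precisely what hypotheses (1) and (2) guarantee; once these are in place, the ring arithmetic of the reduction identity and the splitting via (3) are entirely routine.
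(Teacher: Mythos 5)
Your proposal is correct and is essentially the paper's own proof: the paper likewise reduces everything to the ring $M_{0\cdot x\cdot y}$ via the identities $x+y=(x+0\cdot x\cdot y)+(y+0\cdot x\cdot y)$ and $x\cdot y=(x+0\cdot x\cdot y)\cdot(y+0\cdot x\cdot y)$, splits $f$ over these operations using hypothesis (3), transports the summands with (4), matches the indices $0\cdot x\cdot y$ and $0\cdot f(x)\cdot f(y)$ with (1)--(2), and then absorbs the extra term $0\cdot f(x)\cdot f(y)$ in $N$. The unit condition $f(1_M)=1_N$ is also handled identically, by applying (3) to the top ring $M_0$.
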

\begin{proof}
    Let $x,y\in M$. Then
    \begin{align*}
        f(x+y)&=f(x+y+0\cdot x\cdot y) = f((x+0\cdot x\cdot y)+(y+0\cdot x\cdot y))\\
        &=f(x+0\cdot x\cdot y)+f(y+0\cdot x\cdot y)\\
        &= f(x)+0\cdot f(x\cdot y)+f(y)+0\cdot f(x\cdot y)\\
        &=f(x)+f(y)+0\cdot f(x)\cdot f(y)\\ &=f(x)+f(y)+0\cdot (f(x)+f(y))=f(x)+f(y).
    \end{align*}
From the fact that  $x\cdot y = x\cdot y + 0\cdot x\cdot y=(x+0\cdot x\cdot y)(y+0\cdot x\cdot y) $ we derive 
    \begin{align*}
        f(x\cdot y)&=f(((1+0\cdot x\cdot y)\cdot  x)\cdot ((1+0\cdot x\cdot y)\cdot y)) \\
        &=f((1+0\cdot x\cdot y)\cdot x) \cdot f((1+0\cdot x\cdot y)\cdot y)\\ 
        &= f(x+0\cdot x\cdot y) \cdot f(y+0\cdot x\cdot y)\\
        &=f(x)\cdot f(y)+0\cdot f(x)\cdot f(y)=f(x)\cdot f(y).
    \end{align*}
     The fact that $f(1)=1$ comes from the fact that $f_{|M_0}$ is a ring homomorphism. Then $f$ is an homomorphism of meadows.
\end{proof}

\subsection{Ideals}

Recall that if $R$ is a commutative  ring, a subset $I\subseteq R$ is said to be an \emph{ideal} of $R$ if $I$ is an abelian subgroup of $(R,+)$ and $x\cdot r\in I$, whenever $x\in I$ and $r\in R$. This notion can be adapted to the context of common meadows. 

\begin{definition}
Let $M$ be a meadow and $I\subseteq M$. We say that $I$ is an \emph{ideal} of $M$ if $-x,x+y,x\cdot r,0\in I$, whenever $x,y\in I$ and $r\in M$.  
\end{definition}

Let $M$ be a meadow and $I$ an ideal of $M$. It is an immediate consequence of the definition of ideal of a meadow that $\abf\in I$ and $0\cdot M\subseteq I$. 
In particular, for each $z\in M$, we have that $I\cap M_{0\cdot z}$ is an ideal of $M_{0\cdot z}$. 

Let $0\cdot z,0\cdot z'\in 0\cdot M$ be such that $0\cdot z\leq 0\cdot z'$. Then $x+0\cdot z'\in I\cap M_{0\cdot z}$, for all $x\in I\cap M_{0\cdot z'}$. 
Note that $I\cap M_{0\cdot z}=M_{0\cdot z}$ if and only if $1+0\cdot z\in I$, because $1+0\cdot z$ is the identity of the ring $M_{0\cdot z}$ and if $1+0\cdot z\in I$, for all $0\cdot z'\in 0\cdot M$, we have that $f_{0\cdot z',0\cdot z}(1+0\cdot z) = 1+0\cdot z+ z'=1+0\cdot z'\in I$, where $f_{0\cdot z',0\cdot z}$ is as in Proposition \ref{P:Transitionmaps}.  So we can define the quotient of a meadow by an ideal and the respective transition maps.

\begin{definition}\label{D:QuotientMeadow}
    Let $M$ be a meadow and $I$ an ideal of $M$. Let $$N:=\{0\cdot z\in 0\cdot M\mid I\cap M_{0\cdot z}=M_{0\cdot z}\}.$$ Then we can define the \emph{quotient} 
    $$M/I:=\left(\bigsqcup_{0\cdot z \in 0\cdot M\setminus N} M_{0\cdot z}/(I\cap M_{0\cdot z})\right)\sqcup \{\abf\}.$$ 
    For $0\cdot z,0\cdot z'\in M\setminus N$, such that $0\cdot z'\leq 0\cdot z$ we can define the \emph{transition maps}
     \begin{align*}
         \overline{f}_{0\cdot z',0\cdot z}:M_{0\cdot z}/(I\cap M_{0\cdot z})&\rightarrow M_{0\cdot z'}/(I\cap M_{0\cdot z'})\\
         x+I\cap M_{0\cdot z}&\mapsto f_{0\cdot z,0\cdot z'}(x)+I\cap M_{0\cdot z'},
     \end{align*}
     where $f_{0\cdot z',0\cdot z}$ is as in Proposition \ref{P:Transitionmaps}.
    \end{definition}
We now define operations on the quotient. 
    \begin{definition}\label{D:Operations}
     Let $M$ be a meadow, and $I$ an ideal of $M$. Let $0\cdot z,0\cdot z' \in 0\cdot M\setminus N$, with $N$ as in Definition \ref{D:QuotientMeadow}. The \emph{sum} and \emph{product} on the quotient $M/I$ are defined as follows. Given $x+(I\cap M_{0\cdot z})\in M_{0\cdot z}/(I\cap M_{0\cdot z})$ and $y+(I\cap M_{0\cdot z'})\in M_{0\cdot z'}/(I\cap M_{0\cdot z'})$, we define
    \[
    (x+I\cap M_{0\cdot z'})+(y+I\cap M_{0\cdot z})=
    \begin{cases}
        (x+y)+(I\cap M_{0\cdot z\cdot z'}),& \mbox{ if } 0\cdot z\cdot z'\notin N\\
        \abf,& \mbox{ if } 0\cdot z\cdot z'\in N,
        \end{cases}
     \]
     and 
     \[
     (x+(I\cap M_{0\cdot z'}))\cdot (y+(I\cap M_{0\cdot z}))=
        \begin{cases}
        (x\cdot y)+(I\cap M_{0\cdot z\cdot z'}),& \mbox{ if } 0\cdot z\cdot z'\notin N\\
        \abf,& \mbox{ if } 0\cdot z\cdot z'\in N.
    \end{cases}
    \]
\end{definition}

The following lemma summarizes some easy properties concerning quotients of rings.

\begin{lemma}\label{L:QuotientRing}
    Let $R$ be a (commutative) ring with unity and $I$ an ideal of $R$. Then the quotient $R/I=\{x+I\mid x\in R\}$ is a (commutative) ring, and there is a  surjective homomorphism $\pi:R\rightarrow R/I$ defined by $\pi(x)=x+I$. Moreover, for any ring homomorphism $\psi:R\rightarrow S$, with $f(I)=\{0\}$ there is a unique map $\widetilde{\psi}:R/I\rightarrow S$ such that $\widetilde{\psi}\circ\pi=\psi$.
\end{lemma}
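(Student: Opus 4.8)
The statement to prove is the classical first isomorphism theorem setup for commutative rings with unity, which is entirely standard. Let me sketch a clean proof plan.

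The plan is to verify each assertion in turn: that $R/I$ is a commutative ring, that $\pi$ is a surjective homomorphism, and that the universal property holds. All three are routine once the quotient operations are defined correctly, so the real content is just careful bookkeeping rather than any genuine obstacle.

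First I would define the operations on $R/I$ by $(x+I)+(y+I)=(x+y)+I$ and $(x+I)\cdot(y+I)=(x\cdot y)+I$, and check that these are well defined: if $x+I=x'+I$ and $y+I=y'+I$, then $x-x',y-y'\in I$, so $(x+y)-(x'+y')\in I$ using that $I$ is closed under addition, and $xy-x'y'=x(y-y')+(x-x')y'\in I$ using that $I$ absorbs multiplication by ring elements. With well-definedness in hand, associativity, commutativity, and distributivity in $R/I$ follow immediately by pushing the corresponding identities in $R$ through $\pi$; the class $0+I=I$ is the additive identity and $1+I$ is the multiplicative identity, and commutativity of $R$ transfers to $R/I$.

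Next I would treat $\pi(x)=x+I$. Surjectivity is immediate since every coset has the form $x+I$, and the homomorphism properties $\pi(x+y)=\pi(x)+\pi(y)$, $\pi(xy)=\pi(x)\pi(y)$, $\pi(1)=1+I$ are exactly the definitions of the quotient operations.

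Finally, for the universal property, given $\psi:R\to S$ with $\psi(I)=\{0\}$, I would define $\widetilde{\psi}(x+I)=\psi(x)$. The only nontrivial point is well-definedness: if $x+I=x'+I$ then $x-x'\in I$, so $\psi(x)-\psi(x')=\psi(x-x')=0$, giving $\psi(x)=\psi(x')$. That $\widetilde{\psi}$ is a ring homomorphism and satisfies $\widetilde{\psi}\circ\pi=\psi$ is then immediate, and uniqueness follows because $\pi$ is surjective: any map agreeing with $\psi$ after composing with $\pi$ is forced to take the value $\psi(x)$ on $x+I$. The main (and only mild) obstacle throughout is simply ensuring well-definedness of the operations and of $\widetilde{\psi}$, which rests on $I$ being an additive subgroup that absorbs products; since this is entirely standard ring theory, the proof can be stated concisely.

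\begin{proof}
    The operations on $R/I$ are defined by $(x+I)+(y+I)=(x+y)+I$ and $(x+I)\cdot (y+I)=(x\cdot y)+I$. These are well defined: if $x+I=x'+I$ and $y+I=y'+I$, then $x-x',y-y'\in I$, whence $(x+y)-(x'+y')\in I$ and $x\cdot y-x'\cdot y'=x\cdot(y-y')+(x-x')\cdot y'\in I$, using that $I$ is closed under addition and absorbs multiplication by elements of $R$. Associativity, commutativity and distributivity in $R/I$ then follow directly from the corresponding identities in $R$, with additive identity $0+I$ and multiplicative identity $1+I$. Hence $R/I$ is a commutative ring with unity.

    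The map $\pi(x)=x+I$ is surjective since every element of $R/I$ is a coset $x+I$, and it is a homomorphism by the very definition of the operations, with $\pi(1)=1+I$.

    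Finally, let $\psi:R\rightarrow S$ be a ring homomorphism with $\psi(I)=\{0\}$, and define $\widetilde{\psi}(x+I)=\psi(x)$. This is well defined: if $x+I=x'+I$, then $x-x'\in I$, so $\psi(x)-\psi(x')=\psi(x-x')=0$, giving $\psi(x)=\psi(x')$. It is immediate that $\widetilde{\psi}$ is a ring homomorphism and that $\widetilde{\psi}\circ\pi=\psi$. For uniqueness, if $\theta:R/I\rightarrow S$ also satisfies $\theta\circ\pi=\psi$, then for every $x\in R$ we have $\theta(x+I)=\theta(\pi(x))=\psi(x)=\widetilde{\psi}(x+I)$, and since $\pi$ is surjective this forces $\theta=\widetilde{\psi}$.
\end{proof}
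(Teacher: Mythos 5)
Your proof is correct and is the standard textbook argument; the paper itself states this lemma without proof, treating it as classical background on quotient rings, so there is nothing to diverge from. Note only that you silently (and correctly) read the statement's typo ``$f(I)=\{0\}$'' as ``$\psi(I)=\{0\}$'', which is the intended hypothesis.
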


We show  that the quotient of a meadow by an ideal is a pre-meadow with $\abf$, satisfying the usual properties of quotients of algebraic structures.
\begin{theorem}\label{T:QuotientUniversal}
    Let $M$ be a meadow and $I$ be an ideal of $M$ different from $M$. Then $M/I$ is a pre-meadow with $\abf$, and the mapping $\rho:M\rightarrow M/I$ defined by
    \[\rho(x)=
    \begin{cases}
        x+M_{0\cdot z}\cap I, &\text{ if } x\in M_{0\cdot z} \text{ and } 0\cdot z\notin N\\
        \abf,&\text{ otherwise}
    \end{cases}
    \]
    where $N$ is as in Definition \ref{D:QuotientMeadow}, is an homomorphism of meadows.
    
    Moreover, given an homomorphism of meadows $f:M\rightarrow N$ such that $0\cdot f(x)=f(x)$, for all $x\in I$, there exists a unique homomorphism of meadows $\widetilde{f}:M/I\rightarrow N$ such $\widetilde{f}\circ\rho=f$.
\end{theorem}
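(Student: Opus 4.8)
The plan is to verify the claim in three stages: first establish that $M/I$ is a pre-meadow with $\abf$, then show that $\rho$ is a homomorphism of meadows, and finally prove the universal property. For the first stage, I would invoke Corollary~\ref{C:DirectedLattice} rather than checking the axioms $(\PM_1)$--$(\PM_{10})$ directly. Indeed, the quotient $M/I$ comes equipped with the indexing lattice $(0\cdot M)\setminus N$ (suitably quotiented), the rings $M_{0\cdot z}/(I\cap M_{0\cdot z})$ which are unital commutative rings by Lemma~\ref{L:QuotientRing}, and the transition maps $\overline{f}_{0\cdot z',0\cdot z}$ from Definition~\ref{D:QuotientMeadow}. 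The key preliminary verification is that $(0\cdot M)\setminus N$ together with the collapsed element $\abf$ forms a lattice: since $N$ is the set of $0\cdot z$ with $1+0\cdot z\in I$, and $I$ is closed upward under the transition maps (as observed in the discussion after the definition of ideal, $1+0\cdot z\in I$ forces $1+0\cdot z'\in I$ for all $z'$), the set $N$ is an \emph{up-set} in $0\cdot M$; hence its complement, with all of $N$ identified to a single bottom element $\abf$, inherits the lattice structure of $0\cdot M$. I must also check that the transition maps compose correctly, which is immediate from the cocycle identity $f_{z,z'}\circ f_{z',z''}=f_{z,z''}$ in Proposition~\ref{P:Transitionmaps} together with well-definedness on cosets.

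\textbf{The map $\rho$.}
Next I would show $\rho$ is a homomorphism of meadows using Proposition~\ref{P:LatticeHom}, which reduces the verification to four conditions. On the lattice level, $\rho$ restricts to the natural order-preserving map $0\cdot M\to 0\cdot(M/I)$ that sends $0\cdot z$ to its class (or to $\abf$ when $0\cdot z\in N$); conditions $(1)$ and $(2)$ of Proposition~\ref{P:LatticeHom} then follow from the fact that meets are preserved, with the collapse to $\abf$ handled by the second branch of Definition~\ref{D:Operations}. Condition $(3)$ is exactly the statement that $\rho$ restricted to each $M_{0\cdot z}$ is the canonical quotient ring homomorphism $M_{0\cdot z}\to M_{0\cdot z}/(I\cap M_{0\cdot z})$ of Lemma~\ref{L:QuotientRing}. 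Condition $(4)$ amounts to compatibility of $\rho$ with adding an idempotent $0\cdot z'$, which follows by tracking the definition through the transition maps; the only subtlety is the case distinction on whether the relevant meet lands in $N$, in which case both sides equal $\abf$ by $(\M_{14})$.

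\textbf{The universal property.}
For the final stage, suppose $f:M\to N$ is a meadow homomorphism with $0\cdot f(x)=f(x)$ for all $x\in I$; note this says precisely that $f(x)$ is the zero of its ring $N_{0\cdot f(x)}$, i.e.\ $f(I)$ consists of zeros, so $f$ kills $I$ in each fibre. I define $\widetilde{f}$ on each quotient ring $M_{0\cdot z}/(I\cap M_{0\cdot z})$ by applying the ring-level factorization from Lemma~\ref{L:QuotientRing} to the ring homomorphism $f_{0\cdot z}:M_{0\cdot z}\to N_{0\cdot f(z)}$ of Proposition~\ref{P:MorphismLattice}, which is legitimate because $f_{0\cdot z}$ vanishes on $I\cap M_{0\cdot z}$; on the $\abf$ component I set $\widetilde{f}(\abf)=\abf_N$, which is forced by Proposition~\ref{P:f(0)_f(a)}$(2)$. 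The relation $\widetilde{f}\circ\rho=f$ holds fibrewise by the uniqueness clause of Lemma~\ref{L:QuotientRing}, and it forces $\widetilde{f}$ to be a meadow homomorphism via Proposition~\ref{P:LatticeHom} once more; uniqueness of $\widetilde{f}$ follows because $\rho$ is surjective on each fibre.

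\textbf{Main obstacle.}
I expect the principal difficulty to be the bookkeeping around the set $N$ and the collapse to $\abf$: one must consistently verify that whenever a meet $0\cdot z\wedge 0\cdot z'$ (equivalently $0\cdot z\cdot z'$) falls into $N$, every construction---the operations in Definition~\ref{D:Operations}, the map $\rho$, and the factorization $\widetilde{f}$---returns $\abf$, and that this is compatible with $(\M_{14})$ so that no axiom is violated at the boundary. Establishing that $N$ is genuinely an up-set (so that the complement is closed under meets and truly forms a lattice once $N$ is contracted) is the structural crux on which well-definedness of the whole quotient rests; the remaining verifications are then routine transcriptions of the corresponding ring-theoretic facts through the transition maps.
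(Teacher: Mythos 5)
Your proposal takes essentially the same route as the paper's proof: it obtains the pre-meadow-with-$\abf$ structure on $M/I$ from Corollary~\ref{C:DirectedLattice} and Definition~\ref{D:QuotientMeadow}, builds $\widetilde{f}$ by applying Lemma~\ref{L:QuotientRing} fibrewise to the ring homomorphisms $f_{0\cdot z}$ of Proposition~\ref{P:MorphismLattice} and then gluing, and gets uniqueness from the fibrewise surjectivity of $\rho$ --- the only cosmetic difference being that you route the homomorphism checks for $\rho$ and $\widetilde{f}$ through Proposition~\ref{P:LatticeHom} where the paper verifies the cross-fibre compatibilities directly. One directional slip to fix: $N$ is a \emph{down}-set, not an up-set (from $1+0\cdot z\in I$ one only gets $1+0\cdot z\cdot z'\in I$, i.e.\ closure toward $\abf$), so its complement need not be closed under meets --- which is harmless, since, as you yourself note in the final paragraph, meets landing in $N$ are collapsed to $\abf$ by Definition~\ref{D:Operations}, and it is this downward closure of $N$ that makes the collapse consistent.
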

\begin{proof} 
We start by showing that $M/I$ is a pre-meadow with $\abf$. By Corollary~\ref{C:DirectedLattice} it is enough to show that there is a directed lattice $M/I=\Gamma_i$ but this is obvious from Definition~\ref{D:QuotientMeadow}.

Since $I\neq M$ we have that $0\notin N$ and then $\rho(1)=1+I\cap M_{0}$ is the identity of $M/I$. The fact that $\rho$ is an homomorphism of meadows then immediately follows from Definition \ref{D:Operations}.

    Let $0\cdot z\in 0\cdot M$ and $x\in I\cap M$. Then $f(x)\in N_{0\cdot f(z)}$ and $f(x)=0\cdot f(x)=0\cdot f(z)$. So $f(x)$ is mapped to the zero of $N_{0\cdot f(z)}$. Then, from Lemma \ref{L:QuotientRing} there exists a ring homomorphism 
     \begin{align*}
    \widetilde{f}_{0\cdot z}:M_{0\cdot z}/(I\cap M_{0\cdot z})\rightarrow N_{0\cdot f(z)}\\ \widetilde{f}_{0\cdot z}(x+I\cap M_{0\cdot z})=f(x).
    \end{align*}

    Let $x+I\cap M_{0\cdot z}$ and $y+I\cap M_{0\cdot z'}$, then 
    \begin{align*}
        \widetilde{f}_{0\cdot z\cdot z'}((x+I\cap M_{0\cdot z} )&+(y+I\cap M_{0\cdot z'}))\\
        &=\widetilde{f}_{0\cdot z\cdot z'}(x+y+I\cap M_{0\cdot z\cdot  z'})\\
        &=f(x+y)=f(x)+f(y)\\&=\widetilde{f}_{0\cdot z}(x+I\cap M_{0\cdot z})+\widetilde{f}_{0\cdot z}(y+I\cap M_{0\cdot z'}).
    \end{align*}
    In a similar way one verifies that $$\widetilde{f}_{0\cdot z}((x+I\cap M_{0\cdot z})\cdot(y+I\cap M_{0\cdot z'}))=\widetilde{f}_{0\cdot z\cdot z'}((x+I\cap M_{0\cdot z} )\cdot (y+I\cap M_{0\cdot z'})).$$

    Finally,  $\widetilde{f_0}(0+I\cap M_0)=f(0)=0$. Then we can glue the maps $\widetilde{f}_{0\cdot z}$, in order to define the homomorphism of meadows 
 $\widetilde{f}:M/I\rightarrow N$. 

    Finally, let $g:M/I\rightarrow N$ be such $g\circ\rho=f$, and $z\cdot 0 \in 0\cdot M$. Then, by restricting $g$ to $M_{0\cdot z}/(M_{0\cdot z})$, we have a ring homomorphism $g_{0\cdot z}$ such that $g_{0\cdot z}\circ \rho_{0\cdot z}=f_{0\cdot z}$. By Lemma \ref{L:QuotientRing} we have that $g_{0\cdot z}=\widetilde{f}_{0\cdot z}$, again by gluing we have $g=\widetilde{f}$ as we wanted.
\end{proof}

\begin{corollary}\label{C:QuotientUniversal}
    Let $M$ be a meadow and $I \neq M$ an ideal of $M$. Then $M/I$ is a meadow if and only if for all $x\in M$ the set:
    $$\overline{I}_x=\{0\cdot z\in 0\cdot M\mid x\cdot z -1 \in I\cap M_{0\cdot z}\}$$
    has a unique maximal element.
\end{corollary}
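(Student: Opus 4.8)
The plan is to reduce everything to the invertibility criterion that is already available. By Theorem~\ref{T:QuotientUniversal} the quotient $M/I$ is a pre-meadow with $\abf$, so by Corollary~\ref{C:precomon} it is a common meadow if and only if it is an invertible pre-meadow, i.e.\ if and only if for every $\bar x\in M/I$ the set
\[
I_{\bar x}=\{0\cdot\bar w\in 0\cdot(M/I)\mid \bar x\cdot\bar w=1+0\cdot\bar w\}
\]
has a unique maximal element. The entire task is then to translate this condition, stated inside $M/I$, back into the set $\overline{I}_x$ living over $M$.

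The bridge is the quotient map $\rho\colon M\to M/I$ of Theorem~\ref{T:QuotientUniversal}, which is a surjective homomorphism of meadows. First I would record the consequences of surjectivity: every $\bar x\in M/I$ is of the form $\rho(x)$, and $0\cdot(M/I)=\rho(0\cdot M)$. By Proposition~\ref{P:MorphismLattice} the induced map $\rho_L\colon 0\cdot M\to 0\cdot(M/I)$ is a lattice homomorphism; moreover it is an order embedding on $0\cdot M\setminus N$ and it collapses the set $N$ of Definition~\ref{D:QuotientMeadow} to the least element $\abf$. (Here one uses that $N$ is a down-set, which follows from the transition-map computation $f_{0\cdot z',0\cdot z}(1+0\cdot z)=1+0\cdot z'\in I$ already recorded before Definition~\ref{D:QuotientMeadow}.) Thus, as a poset, $0\cdot(M/I)$ is $0\cdot M\setminus N$ with a new minimum $\abf$ adjoined.

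The key step is the elementwise identity $\rho(\overline{I}_x)=I_{\rho(x)}$. Fix $x\in M$ and $0\cdot z\in 0\cdot M$. Since $\rho$ is a homomorphism, $\rho(x)\cdot\rho(z)=\rho(x\cdot z)$ and $1+0\cdot\rho(z)=\rho(1+0\cdot z)$, so $\rho(0\cdot z)\in I_{\rho(x)}$ exactly when $\rho(x\cdot z)=\rho(1+0\cdot z)$. For this to be possible one needs $0\cdot z\le 0\cdot x$, in which case $x\cdot z\in M_{0\cdot z}$; using $-(1+0\cdot z)=-1+0\cdot z$ one checks that the ring difference of $x\cdot z$ and the unit $1+0\cdot z$ of $M_{0\cdot z}$ is exactly $x\cdot z-1$. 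As $\rho|_{M_{0\cdot z}}$ is, for $0\cdot z\notin N$, the canonical projection $M_{0\cdot z}\to M_{0\cdot z}/(I\cap M_{0\cdot z})$ (Lemma~\ref{L:QuotientRing}), we obtain $\rho(x\cdot z)=\rho(1+0\cdot z)$ iff $x\cdot z-1\in I\cap M_{0\cdot z}$, i.e.\ iff $0\cdot z\in\overline{I}_x$. Treating separately the indices $0\cdot z\in N$ (where $M_{0\cdot z}/(I\cap M_{0\cdot z})$ is the zero ring and both sides of the equation reduce to $\abf$) and the always-present index $\abf$, this yields $\rho(\overline{I}_x)=I_{\rho(x)}$.

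Finally I would combine these facts: since $\rho_L$ is an order embedding away from $N$ and sends $N$ to the minimum $\abf$, the image $\rho(\overline{I}_x)=I_{\rho(x)}$ has a unique maximal element precisely when $\overline{I}_x$ does, and running over all $x\in M$ while using surjectivity of $\rho$ identifies ``for all $x\in M$'' with ``for all $\bar x\in M/I$'', giving the stated equivalence. The main obstacle is exactly this last matching of maximal elements: one must argue that collapsing the down-set $N$ to $\abf$ neither creates nor destroys a unique maximal element of $\overline{I}_x$. The cleanest way to make this transparent — and the presentation I would adopt — is to read $\overline{I}_x$ from the outset as a subset of the quotient lattice $0\cdot(M/I)$, so that all indices of $N$ occurring in $\overline{I}_x$ are identified with the least element $\abf$ (which is never maximal unless it is the only element present); with that convention the correspondence $\overline{I}_x = I_{\rho(x)}$ is literal and the equivalence is immediate.
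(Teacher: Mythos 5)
Your overall route is exactly the paper's intended one: the corollary is stated without proof because it is meant to be the combination of Theorem~\ref{T:QuotientUniversal} (the quotient is a pre-meadow with $\abf$) with Corollary~\ref{C:precomon} (a pre-meadow with $\abf$ is a common meadow if and only if every set $I_{\bar x}$ has a unique maximal element), translated back to $M$ along the projection $\rho$. Your identity $\rho(\overline{I}_x)=I_{\rho(x)}$, the observation that $N$ is a down-set, and the fact that $\rho_L$ is an order embedding on $0\cdot M\setminus N$ carry this translation out correctly.

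However, the obstacle you flag at the end is not merely presentational: under the literal reading of $\overline{I}_x$ as a subset of $0\cdot M$, the matching of maximal elements fails, and with it the corollary as literally stated. Concretely, let $M$ be the meadow given by the diamond lattice with $M_0=\z_6$, $M_{z_1}=\z_2$, $M_{z_2}=\z_3$ (canonical projections, $z_1,z_2$ incomparable) and $M_{\abf}=\{\abf\}$; one checks via Theorem~\ref{T:DirectedLattice} that this is a common meadow. Take the ideal $I=\{0\}\sqcup\z_2\sqcup\z_3\sqcup\{\abf\}$, for which $N=\{z_1,z_2,\abf\}$, so that $M/I\cong\z_6\sqcup\{\abf\}$ is a common meadow. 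Yet $\overline{I}_2=\{z_1,z_2,\abf\}$ has two maximal elements, since $2\cdot w-1$ lies in $I\cap M_{z_1}=M_{z_1}$ for any $w\in M_{z_1}$, and likewise for $z_2$, while $2$ is not invertible modulo $6$. Thus the ``only if'' direction of the corollary is violated unless one adopts precisely your convention of reading $\overline{I}_x$ inside the quotient lattice $0\cdot(M/I)$, i.e.\ identifying all indices lying in $N$ with $\abf$. In short: your proof is correct, but what it proves is the corrected statement; you were right not to rely on the unpatched biconditional ``$\rho(\overline{I}_x)$ has a unique maximal element precisely when $\overline{I}_x$ does'', and the paper's formulation needs the same caveat (or the extra hypothesis that the maximal elements in question lie outside $N$).
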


\begin{exmp}\label{E:QuotientColapse}
Consider the lattice
\[\begin{tikzcd}
	& {M_0} \\
	{M_1} && {\mathbf{M_2}} \\
	&& {\mathbf{M_3}} & {\mathbf{M_4}} \\
	{M_5} && {\mathbf{M_6}} \\
	& {\{\abf\}}
	\arrow[from=1-2, to=2-1]
	\arrow[from=1-2, to=2-3]
	\arrow[from=2-3, to=3-3]
	\arrow[from=2-3, to=3-4]
	\arrow[from=3-3, to=4-3]
	\arrow[from=3-4, to=4-3]
	\arrow[from=2-1, to=4-1]
	\arrow[from=4-1, to=5-2]
	\arrow[from=4-3, to=5-2]
\end{tikzcd}\]

Let $I:=\mathbf{M_2}\sqcup \mathbf{M_3}\sqcup \mathbf{M_4}\sqcup \mathbf{M_6}\sqcup \{\abf\}$, one easily sees that $I$ obeys the condition of Corollary~\ref{C:QuotientUniversal}. Then, the quotient $M/I$ corresponds to the lattice
\[\begin{tikzcd}
	& {M_0} \\
	{M_1} \\
	\\
	{M_5} \\
	& {\{\abf\}}
	\arrow[from=1-2, to=2-1]
	\arrow[from=2-1, to=4-1]
	\arrow[from=4-1, to=5-2]
	\arrow[from=1-2, to=5-2]
\end{tikzcd}\]

That is, the ideal $I$ is identified with $\abf$.

\end{exmp}

\subsection{Kernels and Isomorphisms}

Since common meadows are directed lattices of rings we can define the kernel of an homomorphism related either with the ring structure or with the lattice structure. Let $M$ and $N$ be common meadows, $f:M\rightarrow N$ be an homomorphism of common meadows, and $0\cdot z\in 0\cdot N$. We  define 
\begin{enumerate}
    \item $\Ker^R(f):=\{x\in M\mid f(x)=0\cdot f(x)\}$
    \item $\Ker^{0\cdot z}(f):=\{x\in M\mid f(x)=0\cdot z\}$.   
\end{enumerate}

If $z=0$, then $0\cdot z = 0$, and if $z=\abf$, then $0\cdot z = \abf$. In these cases we write simply $\Ker^{0}(f)$ and $\Ker^{\abf}(f)$.

Recall that the kernel of a ring homomorphism is the pre-image of the zero of the ring. However, in a common meadow $M$ we have "generalized zeros", that is, for all $z\in N$, the element $0\cdot z$ is the zero of the ring $N_{0\cdot z}$. A consequence of this is that the kernel  $\Ker^{0\cdot z}(f)$ may be empty, if $0\cdot z$ is not in the image of $f$. We illustrate this possibility in the following example.

\begin{exmp}
     Consider the following lattices 
    \[\begin{tikzcd}
	&& {S_0} \\
	{R_0} && {S_1} \\
	{\{\abf\}} && {\{\abf\}}
	\arrow[from=2-1, to=3-1]
	\arrow[from=2-3, to=3-3]
	\arrow["\Id"', from=1-3, to=2-3]
\end{tikzcd}\]
where $S_0=S_1=R_0=\z$, and let $M=R_0\sqcup\{\abf\}$ and $N=S_0\sqcup S_1\sqcup\{\abf\}$ be the common meadows related with the lattices above. Define the homomorphism of meadows $f:M\rightarrow N$ by taking $x\in R_0$ and mapping it to $f(x)=x\in S_0$. One can easily see that $\Ker^0(f)=\{0\}$. However, by taking $z\in S_1$ we have that $\Ker^{0\cdot z}(f)$ is empty because $S_1$ does not intersect the image of $f$.
\end{exmp}

Recall from Proposition \ref{P:MorphismLattice} that given an homomorphism of meadows $f:M\rightarrow N$ there are associated ring homomorphisms  $f_{0\cdot z}:M_{0\cdot z}\rightarrow N_{0\cdot f(z)}$ with $z\in M$. One can easily see that $\Ker^R(f)\cap M_{0\cdot z}$ is the kernel of the ring homomorphism $f_{0\cdot z}$. This means that $\Ker^R(f)$ measures the injectivity of the homomorphisms $f_{0\cdot z}$.

\begin{proposition}\label{P:IsomorphTheoremLocal}
    Let $f:M \to N$ be an homomorphism of meadows. Then 
    \begin{enumerate}
        \item $0\cdot M\in \Ker^R(f)$
        \item $\Ker^R(f)$ is an ideal of $M$
        \item $\Ker^R(f)=\bigsqcup_{z\in M}(\Ker^R(f))_{0\cdot z}$, where $(\Ker^R(f))_{0\cdot z}=\Ker^R(f)\cap M_{0\cdot z}$ is an ideal of  $M_{0\cdot z}$. 
        \item $0\cdot M= \Ker^R(f)$ if and only if for all $z\in M$ the ring homomorphism $f_{0\cdot z}$ is injective
        \item There is an homomorphism of pre-meadows with $\abf$, defined by
        \begin{align*}
            \overline{f}:M/\Ker^R(f)\rightarrow N\\
            \overline{f}(x+\Ker^R(f))=f(x),
        \end{align*}  
        such that, for all $\overline{z}\in M/\Ker^R(f)$, the ring homomorphism $\overline{f}_{0\cdot \overline{z}}$ is injective.
    \end{enumerate}
\end{proposition}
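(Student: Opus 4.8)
The plan is to exploit the ring-by-ring decomposition $M=\bigsqcup_{z}M_{0\cdot z}$ from Theorem~\ref{T:union} together with Proposition~\ref{P:MorphismLattice}, which already guarantees that $f$ restricts to ring homomorphisms $f_{0\cdot z}\colon M_{0\cdot z}\to N_{0\cdot f(z)}$. The governing observation is that $\Ker^R(f)$ collects precisely those $x$ whose image falls into the idempotent set $0\cdot N$, equivalently those $x$ mapped to the zero $0\cdot f(x)$ of the ring $N_{0\cdot f(x)}$; so each statement should reduce to a statement about the classical kernels $\Ker(f_{0\cdot z})$. I would treat the parts in the order given, since each feeds the next.

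For part~(1) I would note that every $w\in 0\cdot M$ satisfies $0\cdot w=w$, whence $f(w)=f(0\cdot w)=0\cdot f(w)$ using $f(0)=0$ (Proposition~\ref{P:f(0)_f(a)}(1)) and multiplicativity; thus $w\in\Ker^R(f)$. For part~(2) I would verify the ideal axioms directly: $0\in\Ker^R(f)$ is immediate; closure under negation combines $f(-x)=-f(x)$ (Proposition~\ref{P:f(0)_f(a)}(3)) with the fact that every element of $0\cdot N$ is its own additive inverse (Proposition~\ref{P:pre-meadowres}); closure under addition uses that $0\cdot N$ is closed under sums, via $0\cdot a+0\cdot b=0\cdot(a+b)$; and absorption $x\cdot r\in\Ker^R(f)$ uses $0\cdot f(x)\cdot f(r)=0\cdot f(x\cdot r)$. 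For part~(3), Theorem~\ref{T:union} gives the disjoint decomposition $\Ker^R(f)=\bigsqcup_z\bigl(\Ker^R(f)\cap M_{0\cdot z}\bigr)$, and I would identify each slice $\Ker^R(f)\cap M_{0\cdot z}$ with $\Ker(f_{0\cdot z})$ — as already flagged in the text preceding the proposition — which is an ideal of $M_{0\cdot z}$ since it is the kernel of a ring homomorphism.

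Part~(4) then follows by comparing the two disjoint unions: since $0\cdot M\cap M_{0\cdot z}=\{0\cdot z\}$ is exactly the zero of $M_{0\cdot z}$, the equality $0\cdot M=\Ker^R(f)$ holds if and only if $\Ker(f_{0\cdot z})$ is trivial for every $z$, which for ring homomorphisms is equivalent to injectivity of each $f_{0\cdot z}$.

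Finally, for part~(5) the key point is that the defining condition of $\Ker^R(f)$ — namely $0\cdot f(x)=f(x)$ for $x\in\Ker^R(f)$ — is exactly the hypothesis required to invoke the universal property of Theorem~\ref{T:QuotientUniversal} with $I=\Ker^R(f)$ (a proper ideal, since $f(1)=1\neq 0$ forces $1\notin\Ker^R(f)$). This yields the unique factorization $\overline{f}\colon M/\Ker^R(f)\to N$ with $\overline{f}\circ\rho=f$. Injectivity of $\overline{f}_{0\cdot\overline{z}}$ is then the first isomorphism theorem for rings (Lemma~\ref{L:QuotientRing}): on the slice $(M/\Ker^R(f))_{0\cdot\overline{z}}=M_{0\cdot z}/\Ker(f_{0\cdot z})$ the induced map is precisely the factorization of $f_{0\cdot z}$ through its kernel, hence injective. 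I expect the main obstacle to be the bookkeeping in part~(5): confirming that the ring slices of the quotient meadow are genuinely the ring quotients $M_{0\cdot z}/\Ker(f_{0\cdot z})$ and that $\overline{f}_{0\cdot\overline{z}}$ coincides with the classically induced injection, so that Lemma~\ref{L:QuotientRing} applies verbatim.
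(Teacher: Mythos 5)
Your proposal is correct and follows essentially the same route as the paper's proof: the same idempotence-plus-multiplicativity computation for part (1), the same direct verification of the ideal axioms for part (2), the identification of the slices $\Ker^R(f)\cap M_{0\cdot z}$ with the ring kernels $\Ker(f_{0\cdot z})$ for parts (3) and (4), and an appeal to Theorem~\ref{T:QuotientUniversal} plus triviality of the slice kernels for part (5), where the paper merely unwinds by hand the first-isomorphism-theorem argument you cite via Lemma~\ref{L:QuotientRing}.
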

\begin{proof}
\begin{enumerate}
    \item Let $0\cdot z\in 0\cdot M$. Then $f(0\cdot z)=f(0\cdot 0\cdot z)=0\cdot f( 0\cdot z)$, and so $0\cdot z\in \Ker^R(f)$.
    \item Let $x,y\in \Ker^R(f)$ and $b\in M$. Then 
    \begin{itemize}
        \item $f(-x)=-f(x)=-(0\cdot f(x))=0\cdot f(x)$
        \item $f(x+y)=f(x)+f(y)=0\cdot f(x)+0\cdot f(y)=0\cdot (f(x)+f(y))=0\cdot(f(x+y))$
        \item $f(x\cdot b)=f(x)\cdot f(b)=0\cdot f(x)\cdot f(b)=0\cdot f(x\cdot b)$
        \item $f(0)=f(0\cdot 0)=0\cdot f(0)$.
    \end{itemize}
    Hence $\Ker^R(f)$ is an ideal of $M$.
    \item The fact that $\Ker^R(f)=\bigsqcup_{z\in M}(\Ker^R(f))_{0\cdot z}$ follows from the fact that $M=\bigsqcup_{z\in M}M_{0\cdot z}$. Since $M_0\cdot z$ is a ring and $\Ker^R(f)$ is an ideal then $(\Ker^R(f))_{0\cdot z}=\Ker^R(f)\cap M_{0\cdot z}$ must be an ideal of $M_{0\cdot z}$.
    \item Note that the ring homomorphism $f_{0\cdot z}:M_{0\cdot z}\rightarrow N_{0\cdot f(z)}$ is injective if and only if $\Ker(f_{0\cdot z})=\Ker^R(f)\cap M_{0\cdot z}=\{0\cdot z\}$. Then $f_{0\cdot z}$ is injective, for all $z\in M$, if and only if $0\cdot M=\Ker^R(f)$.

    \item Since $\Ker^R(f)$ is an ideal of $M$ and $0\cdot f(x)=f(x)$ for all $x\in \Ker^R(f)$, by Theorem \ref{T:QuotientUniversal} there exists a (unique) homomorphism of meadows $\overline{f}:M/\Ker^R(f)\rightarrow N$ such that $\overline{f}\circ\rho =f$, where $\rho:M\rightarrow M/\Ker^R(f)$. Let $z\in M$, and $x\in M_{0\cdot z}$, such that $\overline{f}(x+\Ker^R(f)\cap M_{0\cdot z})= 0\cdot(\overline{f}(x+\Ker^R(f)\cap M_{0\cdot z}) ) $. Then 
    $$f(x)=\overline{f}(x+\Ker^R(f)\cap M_{0\cdot z})= 0\cdot f(x) $$
    and so $x\in \Ker^R(f)\cap M_{0\cdot z}$. Hence the map $f_{0\cdot \overline{z}}$ is injective. \qedhere
\end{enumerate}

\end{proof}
We note that the homomorphism $\overline{f}$ in Proposition \ref{P:IsomorphTheoremLocal} may fail to be injective as seen in the following example.

\begin{exmp}
    Take the following lattices
    \[\begin{tikzcd}
	&& {S_0} \\
	&& {S_1} && {R_0} \\
	{} && {\{\abf\}} && {\{\abf\}}
	\arrow[from=2-3, to=3-3]
	\arrow["\Id"', from=1-3, to=2-3]
	\arrow[from=2-5, to=3-5]
\end{tikzcd}\]

where $S_0=S_1=R_0=\z$, and let $M=S_0\sqcup S_1\sqcup\{\abf\}$ and $N=R_0\sqcup\{\abf\}$ be the common meadows related with the lattices above. Then define the homomorphism of common meadow  $f:M\rightarrow N$ in the following way for all $x\in M\setminus\{\abf\}$ we send it to $f(x)=x\in R_0$.

Then it is easy to see that the ring homomorphism $f_0:S_0/(\Ker^R(f)\cap S_0)\rightarrow R_0$ and $f_{0\cdot z}:S_0/(\Ker^R(f)\cap S_1\rightarrow R_0)$, with $z\in S_1$, are both injective, however $f$ is not injective.
\end{exmp}

From Proposition \ref{P:MorphismLattice},  given an homomorphism of meadows $f:M\rightarrow N$ there are an associated homomorphism of lattices $f_L:0\cdot M\rightarrow 0\cdot N$, and a ring homomorphism $f_{0\cdot z}:M_{0\cdot z}\rightarrow N_{0\cdot f(x)}$. Recall that the previous example entails that a common meadow may fail to be injective even if all ring homomorphisms $f_{0\cdot z}$ are injective. The following proposition gives a characterization of injectivity.

\begin{theorem}\label{P:Inject}
    Let $f:M\rightarrow N$  be  an homomorphism of meadows, then $f$ is injective if and only if the maps $f_L$ and $f_{0\cdot z}$, introduced in Proposition \ref{P:MorphismLattice}, are injective homomorphisms for all $z\in M$.
\end{theorem}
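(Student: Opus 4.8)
The plan is to prove both implications directly, exploiting the fibered decomposition $M=\bigsqcup_{z\in 0\cdot M}M_{0\cdot z}$ from Theorem~\ref{T:union} together with the identity $f(0\cdot x)=0\cdot f(x)$, which holds because $f(0)=0$ by Proposition~\ref{P:f(0)_f(a)} and $f$ respects products. The conceptual content is that injectivity of a meadow homomorphism splits cleanly into injectivity ``across fibers'' (measured by $f_L$) and injectivity ``within each fiber'' (measured by the ring maps $f_{0\cdot z}$).

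The forward implication is immediate: if $f$ is injective, then $f_L$ and each $f_{0\cdot z}$ are restrictions of $f$ to the subsets $0\cdot M$ and $M_{0\cdot z}$ of $M$, and the restriction of an injective map to any subset is again injective. The only thing to record is that these restrictions indeed land in $0\cdot N$ and $N_{0\cdot f(z)}$ respectively, which is exactly the content of Proposition~\ref{P:MorphismLattice}.

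For the reverse implication, assume $f_L$ and all $f_{0\cdot z}$ are injective and take $x,y\in M$ with $f(x)=f(y)$. Multiplying by $0$ and using $f(0\cdot x)=0\cdot f(x)$, I get $f_L(0\cdot x)=0\cdot f(x)=0\cdot f(y)=f_L(0\cdot y)$, so injectivity of $f_L$ forces $0\cdot x=0\cdot y$; hence $x$ and $y$ lie in the same fiber $M_{0\cdot x}$. On that fiber $f$ restricts to the ring homomorphism $f_{0\cdot x}\colon M_{0\cdot x}\to N_{0\cdot f(x)}$, and since $f_{0\cdot x}(x)=f(x)=f(y)=f_{0\cdot x}(y)$, injectivity of $f_{0\cdot x}$ yields $x=y$. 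Thus $f$ is injective.

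There is no real obstacle here; the single point demanding attention is the step that forces $x$ and $y$ into a common fiber, which is precisely where injectivity of $f_L$ enters and is what makes the fiberwise maps $f_{0\cdot z}$ sufficient to conclude. The example immediately preceding the statement illustrates the necessity of this hypothesis: there all the $f_{0\cdot z}$ are injective, yet $f$ fails to be injective, exactly because $f_L$ collapses two distinct elements of $0\cdot M$.
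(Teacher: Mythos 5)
Your proposal is correct and follows essentially the same route as the paper's own proof: the forward direction by restriction, and the reverse direction by first using $f(0\cdot x)=0\cdot f(x)$ together with injectivity of $f_L$ to place $x$ and $y$ in the same fiber $M_{0\cdot x}$, then applying injectivity of the ring homomorphism $f_{0\cdot x}$ to conclude $x=y$. No gaps; the argument matches the paper's step for step.
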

\begin{proof}
    If $f$ is injective then it is clear that $f_L$ and $f_{0\cdot z}$ are injective homomorphism for all $z\in M$.

    Now suppose that $f_L$ and $f_{0\cdot z}$ are injective homomorphisms for all $z\in M$. Let  $x,y\in M$ be such that $f(x)=f(y)$. Then $0\cdot f(x)=0\cdot f(y)$ or, equivalently, $f(0\cdot x)=f(0\cdot y)$. Since $f_L$ is injective we have that $0\cdot x=0\cdot y$. Finally, since $f(x)=f_{0\cdot x}(x)$ and $f(x)=f_{0\cdot y}(y)=f_{0\cdot x}(y)$ we have that $f_{0\cdot x}(x)=f_{0\cdot x}(y)$ and from the injectivity of $f_{0\cdot x}$ it follows that $x=y$.
\end{proof}

From Theorem \ref{P:Inject} one easily obtains the following result.

\begin{corollary}
    Let $f:M \to N$ be an homomorphism of meadows. Then $f$ is injective if and only if for all $0\cdot z\in 0\cdot N$ the kernel $\Ker^{0\cdot z}(f)$ has at most one element.    
\end{corollary}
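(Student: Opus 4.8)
The plan is to characterize injectivity of $f$ through the kernels $\Ker^{0\cdot z}(f)$, leveraging Theorem~\ref{P:Inject}, which reduces injectivity of $f$ to the simultaneous injectivity of the lattice map $f_L$ and all the ring maps $f_{0\cdot z}$. The key observation is that the condition ``$\Ker^{0\cdot z}(f)$ has at most one element for all $0\cdot z\in 0\cdot N$'' encodes both of these injectivity conditions at once, since $\Ker^{0\cdot z}(f)$ collects \emph{all} elements of $M$ (regardless of which ring $M_{0\cdot w}$ they live in) mapping to the single element $0\cdot z$ of $N$.

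First I would prove the forward implication, which is immediate: if $f$ is injective, then for any $0\cdot z\in 0\cdot N$, the preimage $\Ker^{0\cdot z}(f)=\{x\in M\mid f(x)=0\cdot z\}$ is the preimage of a single point under an injective map, hence has at most one element. For the converse, I would assume $\Ker^{0\cdot z}(f)$ has at most one element for every $0\cdot z\in 0\cdot N$ and aim to verify the two hypotheses of Theorem~\ref{P:Inject}. For injectivity of $f_L=f_{|0\cdot M}\colon 0\cdot M\rightarrow 0\cdot N$: suppose $0\cdot x,0\cdot y\in 0\cdot M$ satisfy $f(0\cdot x)=f(0\cdot y)$. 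Writing $0\cdot z:=f(0\cdot x)=0\cdot f(x)\in 0\cdot N$ (using Proposition~\ref{P:f(0)_f(a)}(1) and the fact that $f$ maps generalized zeros to generalized zeros), both $0\cdot x$ and $0\cdot y$ lie in $\Ker^{0\cdot z}(f)$, so by hypothesis they coincide; thus $f_L$ is injective.

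Next I would treat the ring maps. Fix $z\in M$ and consider $f_{0\cdot z}\colon M_{0\cdot z}\rightarrow N_{0\cdot f(z)}$. Suppose $x,y\in M_{0\cdot z}$ satisfy $f_{0\cdot z}(x)=f_{0\cdot z}(y)$, i.e.\ $f(x)=f(y)$. Setting $0\cdot w:=0\cdot f(x)\in 0\cdot N$, both $x$ and $y$ belong to $\Ker^{0\cdot w}(f)$; but wait---$f(x)$ need not itself equal $0\cdot w$, so $x,y$ need not lie in $\Ker^{0\cdot w}(f)$ as defined. The correct move is to set $0\cdot z':=f(x)=f(y)$ when $f(x)$ happens to be a generalized zero, but in general $f(x)$ is an arbitrary element of $N$, not a generalized zero, so $\Ker^{0\cdot z'}(f)$ with $z'=$ the element $f(x)$ is not directly the right set. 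I expect \textbf{this to be the main obstacle}: the sets $\Ker^{0\cdot z}(f)$ only detect preimages of \emph{generalized zeros}, whereas ring-injectivity concerns preimages of arbitrary ring elements. To bridge the gap I would exploit the ring structure: since $M_{0\cdot z}$ is a ring with zero $0\cdot z$, the equality $f(x)=f(y)$ gives $f(x-y)=f(x)-f(y)=0\cdot f(x)=f(0\cdot z)$ (the difference living in the ring $N_{0\cdot f(z)}$ collapses to its zero). Thus $x-y\in M_{0\cdot z}$ satisfies $f(x-y)=0\cdot f(z)$, so $x-y\in\Ker^{0\cdot f(z)}(f)$. Since $0\cdot z$ also lies in this kernel and it has at most one element, we get $x-y=0\cdot z$, i.e.\ $x=y$ in the ring $M_{0\cdot z}$. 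Hence $f_{0\cdot z}$ is injective.

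Having verified both hypotheses, Theorem~\ref{P:Inject} yields that $f$ is injective, completing the converse and the proof. The whole argument hinges on the subtraction trick in the final paragraph, which converts the ``arbitrary element'' preimage question into a ``generalized zero'' preimage question that the sets $\Ker^{0\cdot z}(f)$ are designed to answer; everything else is a routine application of Theorem~\ref{P:Inject} together with Proposition~\ref{P:f(0)_f(a)}.
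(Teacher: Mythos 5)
Your proposal is correct and follows the same route the paper intends: the paper derives this corollary directly from Theorem~\ref{P:Inject}, and your argument does exactly that, with the forward direction immediate and the converse verifying injectivity of $f_L$ and of each $f_{0\cdot z}$ from the kernel hypothesis. The subtraction trick you identify (reducing $f(x)=f(y)$ in a ring $M_{0\cdot z}$ to $x+(-y)\in\Ker^{0\cdot f(z)}(f)$, which also contains $0\cdot z$) is precisely the bridge the paper leaves implicit in its "one easily obtains".
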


\begin{definition}
    An \emph{isomorphism of meadows} is a bijective homomorphism of meadows. 
\end{definition}

Theorem \ref{P:Inject} also entails the following version of the first isomorphism theorem.

\begin{corollary}\label{T:IsomorphismTheorem}
    Let $f:M \to N$ be a surjective homomorphism of meadows, such that $f_L:0\cdot M\rightarrow 0 \cdot N$ is an isomorphism of lattices. Then $M/\Ker^R(f)$ is a meadow and  $\overline{f}:M/\Ker^R(f)\rightarrow N$, defined by $\overline{f}(x+\Ker^R(f))=f(x)$  is an isomorphism of meadows.
\end{corollary}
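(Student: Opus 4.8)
The plan is to build everything on the two facts already in hand. By Theorem~\ref{T:QuotientUniversal} the quotient $M/\Ker^R(f)$ is a pre-meadow with $\abf$ and $\overline{f}$ is a well-defined homomorphism of pre-meadows with $\abf$ satisfying $\overline{f}\circ\rho=f$, while Proposition~\ref{P:IsomorphTheoremLocal}(5) guarantees that each associated ring homomorphism $(\overline{f})_{0\cdot\overline{z}}$ is injective. (First one checks that $\Ker^R(f)\neq M$: since $1_N\neq 0_N$ in the meadow $N$ and $f(1_M)=1_N$, the unit $1_M$ is not in $\Ker^R(f)$, so Theorem~\ref{T:QuotientUniversal} indeed applies.) It then remains to (i) identify the lattice of the quotient, (ii) deduce that $\overline{f}$ is a bijection, and (iii) transport the invertibility condition from $N$ to conclude that $M/\Ker^R(f)$ is in fact a meadow.

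For (i) I would compute the set $N$ of Definition~\ref{D:QuotientMeadow} for the ideal $\Ker^R(f)$. The condition $\Ker^R(f)\cap M_{0\cdot z}=M_{0\cdot z}$ is equivalent to $1+0\cdot z\in\Ker^R(f)$, that is, to $f(1+0\cdot z)=0\cdot f(1+0\cdot z)$. Since $f(1+0\cdot z)=1+0\cdot f(z)$ is the unit of the ring $N_{0\cdot f(z)}$ while $0\cdot f(1+0\cdot z)=0\cdot f(z)$ is its zero, this forces $N_{0\cdot f(z)}$ to be the zero ring, i.e.\ $0\cdot f(z)=\abf_N=\min(0\cdot N)$. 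As $f_L$ is a lattice isomorphism it preserves the minimum, so $0\cdot f(z)=\abf_N$ happens exactly when $0\cdot z=\abf_M$. Hence the collapsed set is just $\{\abf_M\}$, the lattice $0\cdot(M/\Ker^R(f))$ is obtained from $0\cdot M$ only by relabelling its bottom, and $\rho_L\colon 0\cdot M\to 0\cdot(M/\Ker^R(f))$ is a bijection. Combining this with the relation $f_L=(\overline{f})_L\circ\rho_L$ and the hypothesis that $f_L$ is bijective shows that $(\overline{f})_L=f_L\circ\rho_L\inv$ is a bijective homomorphism of lattices, hence a lattice isomorphism.

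For (ii) I would run the argument of Theorem~\ref{P:Inject}, which uses only the decomposition into rings over a lattice and not the inverse, so that it is valid for homomorphisms of pre-meadows with $\abf$: if $\overline{f}(\overline{x})=\overline{f}(\overline{y})$, then $(\overline{f})_L(0\cdot\overline{x})=(\overline{f})_L(0\cdot\overline{y})$ gives $0\cdot\overline{x}=0\cdot\overline{y}$ by injectivity of $(\overline{f})_L$, and then injectivity of the single ring map $(\overline{f})_{0\cdot\overline{x}}$ gives $\overline{x}=\overline{y}$. Surjectivity of $\overline{f}$ is immediate from surjectivity of $f=\overline{f}\circ\rho$, so $\overline{f}$ is a bijective homomorphism of pre-meadows with $\abf$.

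Finally, for (iii) I would transport invertibility along the bijection $\overline{f}$. Fix $\overline{x}\in M/\Ker^R(f)$. Because $\overline{f}$ is a homomorphism, $0\cdot\overline{z}\in I_{\overline{x}}$ implies $(\overline{f})_L(0\cdot\overline{z})\in I_{\overline{f}(\overline{x})}$; conversely, if $(\overline{f})_L(0\cdot\overline{z})\in I_{\overline{f}(\overline{x})}$ then $\overline{f}(\overline{x}\cdot\overline{z})=\overline{f}(1+0\cdot\overline{z})$, and injectivity of $\overline{f}$ forces $0\cdot\overline{z}\in I_{\overline{x}}$. Thus the lattice isomorphism $(\overline{f})_L$ restricts to an order isomorphism $I_{\overline{x}}\to I_{\overline{f}(\overline{x})}$; since $N$ is a meadow, $I_{\overline{f}(\overline{x})}$ has a unique maximal element, and therefore so does $I_{\overline{x}}$. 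As this holds for every $\overline{x}$, the pre-meadow $M/\Ker^R(f)$ is invertible, i.e.\ a common meadow by Corollary~\ref{C:precomon}, and $\overline{f}$ is then a bijective homomorphism of meadows, that is, an isomorphism. I expect the main obstacle to be step (i): pinning down exactly which rings of the quotient collapse and seeing that the hypothesis ``$f_L$ is a lattice isomorphism'' is precisely what prevents any level other than the bottom from collapsing, together with the care needed to invoke the injectivity criterion of Theorem~\ref{P:Inject} and the transport argument in the pre-meadow setting, before the quotient is known to be a meadow.
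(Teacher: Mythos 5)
Your proof is correct and is essentially the derivation the paper intends: the paper states this corollary without further argument as a consequence of Theorem~\ref{P:Inject} (together with Theorem~\ref{T:QuotientUniversal} and Proposition~\ref{P:IsomorphTheoremLocal}(5)), and your steps (i)--(iii) are exactly the details of that derivation --- identifying the quotient's lattice so that $(\overline{f})_L$ is bijective, running the injectivity criterion (correctly noting its proof needs no inverses, so it applies before the quotient is known to be a meadow), and transporting the unique-maximal-element condition along the bijection to conclude invertibility via Corollary~\ref{C:precomon}. The only point to flag is the degenerate case $N=\{\abf\}$, where $1_N=0_N$ and $\Ker^R(f)=M$, so your appeal to Theorem~\ref{T:QuotientUniversal} is not available; but there the lattice-isomorphism hypothesis forces $M=\{\abf\}$ as well, $M/\Ker^R(f)=\{\abf\}$ by Definition~\ref{D:QuotientMeadow}, and the statement holds trivially, so this is a harmless omission (shared by the paper's own formulation).
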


Recall that for isomorphisms of monoids the notion of kernel is replaced by the notion of congruence. In our setting, the same is needed in order to be able to show a proper version of the first isomorphism theorem.

\begin{definition}\label{D:Relation}
    Let $f:M\rightarrow N$, and let $x,y\in M$ then we say that $x\sim_f y$

    \begin{equation}\label{Eq:Equiv_rel}
\forall x,y\in M \left(x\sim_f y \iff f(x)=f(y)\right). 
\end{equation}
\end{definition}

Since a morphism of common meadows is also a morphism of the monoids $(M,+)$ and $(M,\cdot)$, by \cite[Theorem 1.5.2]{howie1995fundamentals} we have that the relation in Definition \ref{D:Relation} is an equivalence relation.  From \cite[Theorem 1.5.2]{howie1995fundamentals} we have that $M/\sim_f$ is a pre-meadow with $\abf$. Also,  there exists an isomorphism of common meadows $\overline{f}:M/\!\sim_f \ \rightarrow f(M)$ defined by $\overline{f}([x]_{\sim_f})=f(x)$, for all $x\in M$. In the case $f$ is surjective we have that $M/\!\sim_f$ is indeed a meadow. Hence we have the following result.

\begin{proposition}\label{P:LatticeQuotient}
    Let $f:M\rightarrow N$ be an surjective homomorphism of meadows. Then $M/\sim_f$ is a meadow, where $\sim_f$ is the equivalence relation defined by \eqref{Eq:Equiv_rel}.

    Moreover, the map $\overline{f}:M/\!\sim_f \ \rightarrow N$ defined by $\overline{f}([x]_{\sim_f})=f(x)$, for all $x\in M$ is an isomorphism of meadows.
\end{proposition}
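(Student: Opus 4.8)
The plan is to build on the discussion immediately preceding the statement, where the congruence $\sim_f$ and its quotient were already identified, and to reduce the whole proposition to transporting the meadow structure of $N$ across the bijection $\overline{f}$. First I would record that $\sim_f$ is a congruence for \emph{both} the additive and the multiplicative monoid structures: if $x\sim_f x'$ and $y\sim_f y'$ then $f(x+y)=f(x)+f(y)=f(x')+f(y')=f(x'+y')$ and likewise $f(x\cdot y)=f(x'\cdot y')$, so $x+y\sim_f x'+y'$ and $x\cdot y\sim_f x'\cdot y'$; since $-x=(-1)\cdot x$ by Proposition~\ref{P:pre-meadowres}, the multiplicative part already handles negation. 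By the standard theory of monoid congruences \cite[Theorem~1.5.2]{howie1995fundamentals}, the operations $[x]_{\sim_f}+[y]_{\sim_f}:=[x+y]_{\sim_f}$, $[x]_{\sim_f}\cdot[y]_{\sim_f}:=[x\cdot y]_{\sim_f}$ and $-[x]_{\sim_f}:=[-x]_{\sim_f}$ are therefore well defined and the projection $x\mapsto[x]_{\sim_f}$ is a surjective homomorphism. Because the axioms $(\PM_1)$--$(\PM_{10})$ are equational identities, they are inherited by the quotient, so $M/\!\sim_f$ is a pre-meadow; that it is in fact a pre-meadow with $\abf$, with distinguished element $[\abf]_{\sim_f}$, follows exactly as recalled before the statement.

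Next I would check that $\overline{f}$ is a bijective homomorphism of pre-meadows with $\abf$. It is well defined and injective precisely because $\sim_f$ is the kernel relation of $f$: one has $[x]_{\sim_f}=[y]_{\sim_f}$ if and only if $f(x)=f(y)$, i.e.\ if and only if $\overline{f}([x]_{\sim_f})=\overline{f}([y]_{\sim_f})$. It is a homomorphism because $\overline{f}([x]_{\sim_f}+[y]_{\sim_f})=\overline{f}([x+y]_{\sim_f})=f(x+y)=f(x)+f(y)=\overline{f}([x]_{\sim_f})+\overline{f}([y]_{\sim_f})$, with the analogous computation for products, while $\overline{f}([1]_{\sim_f})=f(1)=1$. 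Finally, surjectivity of $f$ yields surjectivity of $\overline{f}$ onto $N$, since every element of $N$ is of the form $f(x)=\overline{f}([x]_{\sim_f})$.

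It then remains to upgrade $M/\!\sim_f$ from a pre-meadow with $\abf$ to a genuine meadow, and this is where surjectivity is essential: it makes the codomain of $\overline{f}$ the whole meadow $N$ rather than a proper sub-pre-meadow. By Corollary~\ref{C:precomon}, $N$ is an invertible pre-meadow, so each set $I_{f(x)}\subseteq 0\cdot N$ has a unique maximal element in the sense of Theorem~\ref{T:Existenceofinverse}. I would transport this along $\overline{f}$: being a bijective homomorphism preserving $0$, $1$ and $\abf$, the map $\overline{f}$ restricts to an order-isomorphism $0\cdot(M/\!\sim_f)\to 0\cdot N$ and carries each $I_{[x]_{\sim_f}}$ bijectively and order-preservingly onto $I_{f(x)}$. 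Hence every $I_{[x]_{\sim_f}}$ has a unique maximal element, so $M/\!\sim_f$ is an invertible pre-meadow and, by Corollary~\ref{C:precomon}, a common meadow. Equivalently, one may pull the inverse of $N$ back by setting $[x]_{\sim_f}\inv:=\overline{f}\inv\big(\overline{f}([x]_{\sim_f})\inv\big)$ and verify the equational axioms $(\M_{10})$--$(\M_{13})$ directly, since $\overline{f}$ preserves all the operations occurring in them. Either way $M/\!\sim_f$ is a meadow and $\overline{f}$, being a bijective homomorphism of meadows, is an isomorphism of meadows.

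The routine parts are the congruence verification and the homomorphism and bijectivity of $\overline{f}$, both of which are formal consequences of $f$ being a homomorphism and of the definition of $\sim_f$. The only genuine point is the last upgrade, and I expect the main obstacle to be arguing cleanly that the invertibility condition of Theorem~\ref{T:Existenceofinverse} transfers across $\overline{f}$ — that the induced order-isomorphism of the lattices $0\cdot(M/\!\sim_f)$ and $0\cdot N$ identifies each $I_{[x]_{\sim_f}}$ with $I_{f(x)}$ — since this is precisely what turns the bijection into an isomorphism of meadows and what makes the surjectivity hypothesis do its work.
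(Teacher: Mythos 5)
Your proposal is correct and follows essentially the same route as the paper, whose proof is the discussion preceding the statement: $\sim_f$ is a congruence for both monoid structures (the paper cites \cite[Theorem~1.5.2]{howie1995fundamentals}), the quotient is a pre-meadow with $\abf$, $\overline{f}$ is a bijective homomorphism, and surjectivity of $f$ lets the meadow structure of $N$ transfer back to $M/\!\sim_f$. The only difference is one of detail: you spell out explicitly how the invertibility condition of Theorem~\ref{T:Existenceofinverse} is transported across $\overline{f}$ (identifying each $I_{[x]_{\sim_f}}$ with $I_{f(x)}$ via the induced order-isomorphism of $0\cdot(M/\!\sim_f)$ with $0\cdot N$), a step the paper asserts without elaboration.
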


Note that the inverse of the equivalence class $[x]_{\sim_f}$ may fail to be $[x\inv]_{\sim_f}$ as illustrated in the example below.
\begin{exmp}
    Let $M=(\z_2\times\z_2)\sqcup\{\abf\}$ and $N=\z_2\sqcup\{\abf\}$. One easily sees that for all $x\in M\setminus\{(1,1)\}$ we have $x\inv= \abf$. Define the map $f:M\rightarrow N$ by $f(a,b)=a\in N$, and $f(\abf)=\abf$. Then  $M/\sim_f=\{ \{(0,0),(0,1)\},\{(1,1),(1,0)\},\{\abf\} \}$ and $\overline{f}(\{(1,1),(1,0)\})=1$. But $(1,0)\inv =\abf$ while $1\inv =1$.
\end{exmp}

Observe that in the case where the map $f_L$ is injective, the quotient in Proposition \ref{P:LatticeQuotient} coincides with the quotient defined in Corollary \ref{T:IsomorphismTheorem}.

Let $M$ be a meadow. By Proposition~\ref{P:Assembly}, the structure $(M,+)$ is an assembly. We may then consider the set of all assembly homomorphisms (see \cite{DDT} for the definition) from $(M,+)$ to itself. We denote this set by $\mathrm{End}(M)$. The proof of the following theorem is similar to the proof of Proposition \ref{P:Assembly} so it shall be omitted here.
\begin{theorem}
    Let $M$ be a meadow. Then  $(\mathrm{End}(M),+)$ is an assembly. 
\end{theorem}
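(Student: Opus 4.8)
The plan is to equip $\mathrm{End}(M)$ with the \emph{pointwise} addition $(f+g)(x):=f(x)+f(x)$—more precisely $(f+g)(x):=f(x)+g(x)$—inherited from $(M,+)$, and then to show that, with the local-identity and local-inverse maps transported pointwise from the assembly $(M,+,e,s)$ of Proposition~\ref{P:Assembly}, the three axioms of Definition~\ref{defassembley} hold. Associativity of this addition is immediate from $(\M_1)$ and commutativity from $(\M_2)$, applied argument-by-argument, and the set is nonempty since $\Id_M\in\mathrm{End}(M)$. Thus $(\mathrm{End}(M),+)$ is at least a commutative semigroup, and it remains to supply the assembly structure.

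The first genuine task—and the step I expect to be the main obstacle—is \emph{closure} of $\mathrm{End}(M)$ under pointwise addition. Recall that $f\in\mathrm{End}(M)$ means (as an assembly homomorphism, see \cite{DDT}) that $f(x+y)=f(x)+f(y)$, that $f$ commutes with $e$, i.e.\ $f(0\cdot x)=0\cdot f(x)$, and that $f$ commutes with $s$, i.e.\ $f(-x)=-f(x)$. For $f,g\in\mathrm{End}(M)$ I would verify these same three properties for $f+g$: preservation of $+$ follows from commutativity and associativity of addition in $M$; preservation of $e$ reduces to the distributive law $0\cdot(a+b)=0\cdot a+0\cdot b$ (axiom $(\PM_8)$); and preservation of $s$ reduces to $-(a+b)=-a+(-b)$, which holds by Proposition~\ref{P:pre-meadowres}(6). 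This is where the full strength of ``assembly homomorphism'' is used, as opposed to that of a mere additive map.

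Next I would define the candidate local identity and local inverse by composition: $E(f):=e\circ f$, so that $E(f)(x)=0\cdot f(x)$, and $S(f):=s\circ f$, so that $S(f)(x)=-f(x)$. To see these lie in $\mathrm{End}(M)$ I would first observe that the self-maps $e\colon x\mapsto 0\cdot x$ and $s\colon x\mapsto -x$ of $M$ are themselves assembly endomorphisms—short checks using $0\cdot(-x)=0\cdot x$ and $-(0\cdot x)=0\cdot x$, both from Proposition~\ref{P:pre-meadowres}(3),(6)—so that $E(f)$ and $S(f)$ belong to $\mathrm{End}(M)$ as composites of assembly homomorphisms. Then $(A_1)$ holds because $(f+E(f))(x)=f(x)+0\cdot f(x)=f(x)$, while its minimality clause follows by applying axiom $(A_1)$ of $(M,+,e,s)$ pointwise at $a=f(x)$: any $h\in\mathrm{End}(M)$ with $f+h=f$ satisfies $f(x)+h(x)=f(x)$, whence $0\cdot f(x)+h(x)=0\cdot f(x)$ and so $E(f)+h=E(f)$. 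Axiom $(A_2)$ holds since $(f+S(f))(x)=f(x)+(-f(x))=0\cdot f(x)=E(f)(x)$ and $E(S(f))(x)=0\cdot(-f(x))=0\cdot f(x)=E(f)(x)$.

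Finally, $(A_3)$—in which the assembly ``product'' is precisely our pointwise $+$—amounts to $E(f+g)=E(f)+E(g)$, and this follows from $0\cdot\big(f(x)+g(x)\big)=0\cdot f(x)+0\cdot g(x)$, i.e.\ once more from $(\PM_8)$. Having established the preliminary semigroup and closure facts together with $(A_1)$–$(A_3)$, I would conclude that $(\mathrm{End}(M),+)$ is an assembly, in fact a commutative one. As the paper notes, every computation after the closure step is simply the pointwise echo of the corresponding line in Proposition~\ref{P:Assembly}, which explains why the two proofs are ``similar''; the only real novelty is checking that pointwise sums, and the composites $e\circ f$ and $s\circ f$, stay inside $\mathrm{End}(M)$.
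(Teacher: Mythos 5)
Your proof is correct and is essentially the argument the paper intends: the paper omits the proof, remarking only that it is similar to the proof of Proposition~\ref{P:Assembly}, and your verification of $(A_1)$--$(A_3)$ by pointwise transport of $e$ and $s$ (via $E(f)=e\circ f$, $S(f)=s\circ f$) is exactly that pointwise echo. The closure checks you single out — that $f+g$, $e\circ f$ and $s\circ f$ remain assembly endomorphisms, via $(\PM_8)$ and Proposition~\ref{P:pre-meadowres} — are the only content beyond Proposition~\ref{P:Assembly}, and you handle them correctly.
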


If $M$  is a structure satisfying all the conditions in Definition~\ref{D:PreMeadow} except $(\PM_6)$, we say that $M$ is a \emph{non-commutative pre-meadow}. Non-commutative pre-meadows with $\abf$ are defined in an analogous way.

\begin{theorem}
    Let $M$ be a meadow. Then $(\mathrm{End}(M),+,\circ)$ is a non-comutative pre-meadow with $\abf$. Additionally, the map
    \begin{align*}
        M&\rightarrow \mathrm{End}(M)\\
        d&\mapsto \phi_d,
    \end{align*}
    where $\phi_d(x)=dx$  is injective and its image is a common meadow isomorphic to $M$.
\end{theorem}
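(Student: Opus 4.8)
The plan is to equip $\mathrm{End}(M)$ with pointwise addition and negation, $(\varphi+\psi)(x)=\varphi(x)+\psi(x)$ and $(-\varphi)(x)=-\varphi(x)$, and with composition as multiplication, taking the identity map $\Id$ as $1$, the map $\phi_0\colon x\mapsto 0\cdot x$ as the additive neutral element, and the constant map $x\mapsto\abf$ as $\abf$. First I would quote the preceding theorem, which already furnishes the additive assembly structure on $(\mathrm{End}(M),+)$; from it the commutativity and associativity of $+$, axiom $(\PM_9)$, and the description of the additive idempotent of $\varphi$ as $e(\varphi)\colon x\mapsto 0\cdot\varphi(x)$ are read off at once. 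The purely multiplicative axioms are immediate: $(\PM_5)$ is associativity of composition and $(\PM_7)$ is $\Id\circ\varphi=\varphi$, while the absence of $(\PM_6)$---composition of endomorphisms does not commute---is precisely what places the structure among non-commutative pre-meadows. Left distributivity $(\PM_8)$, namely $\varphi\circ(\psi+\chi)=\varphi\circ\psi+\varphi\circ\chi$, follows from the additivity of $\varphi$.

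The substance of the first part lies in the axioms that couple the two operations, $(\PM_3)$--$(\PM_4)$ and $(\PM_{10})$, because there multiplication is composition rather than a pointwise operation. The key computation is $(\phi_0\circ\varphi)(x)=0\cdot\varphi(x)=e(\varphi)(x)$, which identifies the product $0\cdot\varphi$ with the additive idempotent $\varphi+(-\varphi)$ and thereby yields $(\PM_4)$ once $\phi_0$ is taken as $0$. Axiom $(\PM_3)$ then reads $\varphi+\phi_0=\varphi$, which unwinds to $0\cdot\varphi(x)\le 0\cdot x$ for all $x$, i.e.\ to each endomorphism being order-decreasing on the idempotent semilattice $0\cdot M$; the analysis of $(\PM_{10})$ likewise reduces, via $(\PM_{10})$ in $M$, to comparing $(0\cdot\varphi(x))\cdot\psi(x)$ with $0\cdot\varphi(\psi(x))$. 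Verifying these compatibilities, hence pinning down that $\phi_0$ (rather than the constant zero map) is the correct neutral element of the meadow operations, is the main obstacle, and is the step where the precise definition of assembly homomorphism from \cite{DDT} must be used. Granting them, Definition~\ref{D:PreMeadowA} is satisfied: the constant map $x\mapsto\abf$ is absorbing since $\varphi(x)+\abf=\abf$ in $M$, and it spans the unique singleton ring, so $(\mathrm{End}(M),+,\circ)$ is a non-commutative pre-meadow with $\abf$.

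For the second assertion I would first confirm $\phi_d\in\mathrm{End}(M)$: additivity is $(\PM_8)$ in $M$, preservation of $e$ is $\phi_d(0\cdot x)=0\cdot d\cdot x=0\cdot\phi_d(x)$, and preservation of $s$ is $\phi_d(-x)=-\phi_d(x)$ by Proposition~\ref{P:pre-meadowres}. The two identities $\phi_d+\phi_{d'}=\phi_{d+d'}$ and $\phi_d\circ\phi_{d'}=\phi_{d\cdot d'}$, both direct from the definitions together with distributivity and associativity in $M$, and $\phi_{1_M}=\Id$, show that $d\mapsto\phi_d$ respects both operations and the unit. Injectivity follows by evaluating at $1_M$: if $\phi_d=\phi_{d'}$ then $d=d\cdot 1_M=d'\cdot 1_M=d'$. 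The same two identities show that the image $\{\phi_d\mid d\in M\}$ is closed under $+$ and $\circ$.

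Finally, to identify the image as a common meadow isomorphic to $M$, I would transport the inverse along the bijection $d\mapsto\phi_d$ by setting $\phi_d\inv:=\phi_{d\inv}$ and observe that the meadow equations $(\M_{10})$--$(\M_{14})$ then hold in the image exactly because they hold in $M$ and the map respects $+$, $\circ$, the unit, and the distinguished elements $0_M\mapsto\phi_0$ and $\abf\mapsto(x\mapsto\abf)$. The point worth emphasising is that within the image the neutral element is $\phi_0$---the image carries the zero inherited from $0_M$, not any ambient constant map---so with that identification $d\mapsto\phi_d$ is by construction an isomorphism of common meadows onto its image. I expect the embedding and isomorphism claims to be essentially formal, and the genuine difficulty of the theorem to be the compatibility of the composition product with the additive idempotents underlying $(\PM_3)$, $(\PM_4)$ and $(\PM_{10})$, as described above.
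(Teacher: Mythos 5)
Your treatment of the second assertion is essentially the paper's own proof: injectivity by evaluating at $1_M$, the identities $\phi_{d+d'}=\phi_d+\phi_{d'}$, $\phi_{d\cdot d'}=\phi_d\circ\phi_{d'}$, $\phi_{1_M}=\Id_M$, together with $\phi_{0_M}=0_{\mathrm{End}(M)}$ and $\phi_{\abf_M}=\abf_{\mathrm{End}(M)}$, from which one concludes that the image is a common meadow isomorphic to $M$. That half stands as written.

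The gap is in the first assertion, and it sits exactly where you wrote ``Granting them''. You correctly reduce $(\PM_3)$ (with $\phi_0\colon x\mapsto 0\cdot x$ as zero and composition as product) to the condition $0\cdot\varphi(x)\leq 0\cdot x$ for every $\varphi\in\mathrm{End}(M)$ and every $x$, and $(\PM_{10})$ to $0\cdot\varphi(x)\cdot\psi(x)=0\cdot\varphi(\psi(x))$; but you never prove these conditions, deferring to the definition of assembly homomorphism in \cite{DDT}. That deferral cannot be made to work: an assembly endomorphism of $(M,+)$ is an additive map, and additivity (which already forces compatibility with $e$ and $s$, by their uniqueness) does not imply your conditions. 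Concretely, take $M=\z\sqcup\{\abf\}$ and let $\varphi$ be the constant map $\varphi(x)=0_M$; it is additive and commutes with $e(x)=0\cdot x$ and $s(x)=-x$, so it lies in $\mathrm{End}(M)$ under any reasonable reading of \cite{DDT}, yet $(\varphi+\phi_0)(\abf)=0+\abf=\abf\neq 0=\varphi(\abf)$, so $(\PM_3)$ fails, and with $\psi=\Id$ one gets $0\cdot\bigl(\varphi(\abf)+\psi(\abf)\bigr)=\abf$ while $0\cdot\varphi(\psi(\abf))=0$, so $(\PM_{10})$ fails as well. Switching the zero to the constant map at $0_M$ does not help either: that rescues $(\PM_3)$ but breaks $(\PM_4)$, since $\varphi+(-\varphi)$ is the map $x\mapsto 0\cdot\varphi(x)$, which is not of the form ``zero composed with $\varphi$'' for $\varphi=\Id$. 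Your conditions do hold on the image $\{\phi_d\mid d\in M\}$ (because $0\cdot d\cdot x\leq 0\cdot x$ and $0\cdot d\cdot d'\cdot x=0\cdot d\cdot x\cdot d'\cdot x$), which is why your second half survives; but on all of $\mathrm{End}(M)$ the step you flag as the ``main obstacle'' is not an omitted verification --- as formulated, it is false, and the theorem's first claim needs either a restriction of $\mathrm{End}(M)$ to such order-decreasing endomorphisms or a strictly stronger homomorphism notion. For comparison, the paper's own proof is silent at precisely this point (it asserts that this part ``easily follows from the definition''), so nothing in the paper can be quoted to close your gap.
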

\begin{proof}
    The fact that $(\mathrm{End}(M),+,\circ)$ is a non-comutative pre-meadow with $\abf$ easily follows from the definition and the fact that $M$ is a common meadow.

    Additionally, if we take $d_1,d_2\in M$ such that $\phi_{d_1}=\phi_{d_2}$, then $\phi_{d_1}(1)=\phi_{d_2}(1)$, that is $d_1=d_2$. Hence the map $\phi$ is injective. Note that the map $\phi$ has the following properties
    \begin{itemize}
        \item $\phi(d_1+d_2)=\phi(d_1)+\phi(d_2)$
        \item $\phi(d_1 \cdot d_2)=\phi(d_1)\cdot \phi(d_2)$
        \item $\phi(1)=\Id_M$
        \item $\phi(0)=0_{\mathrm{End}(M)}$
        \item $\phi(\abf)=\abf_{\mathrm{End}(M)}$,
    \end{itemize}
    from which one concludes that the image of $\phi$ is a meadow isomorphic with $M$.
\end{proof}

In Example \ref{E:QuotientColapse} we have an homomorphism $\rho:M\rightarrow M/I$ such that $\Ker^{\abf}(\rho)$ is equal to $I$. That is, given an homomorphism $f:M\rightarrow N$, the set $\Ker^{\abf}(f)$ is an ideal of $M$ which measures how much the common meadow $M$ collapses to $\abf$ when forming the quotient $M/\Ker^{\abf}(f)$.

\section{Alternative meadows}\label{S:Alternative}

In \cite{Bergstra2015}, the authors considered the possibility of adding other axioms to common meadows. Here we consider those axioms (see  Figure~\ref{tab:other_axioms}), present some examples and give a different proof for \cite[Proposition 3.1.1]{Bergstra2015} profiting from the fact that we may now see common meadows as lattices of rings. It is not difficult to see that (over the axioms of common meadows)  $\NVL+\AVL=\CIL$. Examples \ref{E:NVL} and  \ref{E:AVL} illustrate this equality.

\begin{figure}[ht]
\centering
\begin{tabular}{l l c r}
\toprule
\\[-2mm]
$x \neq \abf \to 0 \cdot x = 0$ & Normal Value Law &($\NVL$)\\
$x~\inv = \abf \to 0 \cdot x = x$ & Additional Value Law &($\AVL$)\\
$x \neq 0 \wedge x \neq \abf \to x \cdot x\inv = 1$ & Common Inverse Law &($\CIL$)\\
\toprule
\end{tabular}
\caption{Additional axioms for common meadows}
\label{tab:other_axioms}
\end{figure}

The following examples illustrate how one can interpret $\NVL, \AVL$ and $\CIL$ in terms of lattices. 
\begin{exmp}\label{E:NVL}
Let $R$ be a ring and $M=R\sqcup\{\abf\}$ be a meadow defined by the following lattice
\[\begin{tikzcd}[ampersand replacement=\&]
	R\\
	{\{\abf\}}
	\arrow[from=1-1, to=2-1]
\end{tikzcd}\]
The element $0\in M$ behaves as in a ring, that is $0\cdot x = 0$, for all $x\in M\setminus\{\abf\}$. Hence $\NVL$ is satisfied. In fact, $\NVL$ postulates that the associated lattice must be similar to the lattice in this example.
\end{exmp}

\begin{exmp}\label{E:AVL}
    Consider the following lattices
\[\begin{tikzcd}
	{\mathbb{Z}} &&&  {\mathbb{R}[x]}\\
	{\mathbb{R}} &&& {\mathbb{R}} \\
	{\{\abf\}} &&& {\{\abf\}}
	\arrow[from=1-1, to=2-1]
	\arrow[from=2-1, to=3-1]
	\arrow[from=1-4, to=2-4]
	\arrow[from=2-4, to=3-4]
\end{tikzcd}\]

Let $M$ be the meadow related with the lattice on the left and $N$ the meadow related with the lattice on the right. We have that $M$ satisfies $\AVL$, while $N$ does not. Indeed, the only elements of $M$ whose inverse is $\abf$ are the elements $x\in M$ such that $0\cdot x = x$, that is, the zeros of the rings $M_{0\cdot x}$.

As for $N$, the polynomials in $\r[x]$ that have no constant term are elements $x\in N$ such that $0\cdot x \neq x$, but have inverse equal to $\abf$.
\end{exmp}

\begin{exmp}
    Suppose that $M$ is a meadow satisfying $\CIL$ and $0\cdot M$ is the lattice

\[\begin{tikzcd}
	& {M_0} \\
	{M_1} && {M_2} \\
	{M_3} & {M_4} & {M_5} \\
	& {\{\abf\}}
	\arrow[from=1-2, to=2-1]
	\arrow[from=1-2, to=2-3]
	\arrow[from=2-1, to=3-1]
	\arrow[from=2-1, to=3-2]
	\arrow[from=2-3, to=3-2]
	\arrow[from=2-3, to=3-3]
	\arrow[from=3-2, to=4-2]
	\arrow[from=3-1, to=4-2]
	\arrow[from=3-3, to=4-2]
\end{tikzcd}\]

    For $x\in M_1$, we have that the inverse of $x$ belongs to $M_x$, where $M_x$ is either $M_3$, $M_4$, or $\{\abf\}$. The product $x\cdot x\inv$ must also be in $M_x$. Since $M$ satisfies $\CIL$ we have that if $x\neq 0$ and $x\neq \abf$ then $x\cdot x\inv =1\in M_0$. Then, in fact there is no $M_1$ and  no $M_2$, and the lattice collapses to
    \[\begin{tikzcd}
	{M_0} \\
	{\{\abf\}}
	\arrow[from=1-1, to=2-1]
\end{tikzcd}\]

    Additionally, for all $x\in M_0$ such that $x\neq 0$ and $x\neq \abf$, by the same reasoning $x\inv \in M_0$. This implies that $M_0$ must be a field.
\end{exmp}

\begin{proposition}\label{P:NVL}
    Let $M$ be a meadow. Then 
    \begin{enumerate}
        \item  $M$ satisfies $\NVL$ if and only if $0\cdot M=\{\abf,0\}$.
        \item If $M$ satisfies $\AVL$, then $M_z$ is a field for every minimal element $z\in 0\cdot M\setminus \{\abf\}$.
        \item  $M$ satisfies both $\NVL$ and $\AVL$ if and only if $M=\mathbb{F}\sqcup \{\abf\}$, where $\mathbb{F}$ is a field. 
        \item $M$ satisfies $\CIL$ if and only if $M=M_0\sqcup\{\abf\}$ and $M_0$ is a field.
    \end{enumerate}
\end{proposition}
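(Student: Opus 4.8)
The plan is to reduce each of the four equivalences to the structure already in place. By Theorem~\ref{T:union} I would write $M=\bigsqcup_{z\in 0\cdot M}M_z$ with $M_z$ a unital commutative ring of zero $z$ and unit $1+z$, and recall that $0\cdot M$ is a lattice with maximum $0$ and minimum $\abf$. Two auxiliary facts will carry the whole argument. First, the product of $x\in M_a$ and $y\in M_b$ lies in $M_{a\cdot b}=M_{a\wedge b}$; since $1\in M_0$ and the union is disjoint, an equation of the form $u\cdot v=1$ forces $u,v$ to live in $M_0$. Second, $0\cdot x\inv\le 0\cdot x$ for every $x$: indeed $0\cdot x\inv$ is the maximal element of $I_x$ (Theorem~\ref{T:Existenceofinverse}), and multiplying $x\cdot z=1+0\cdot z$ by $0$ shows that every $0\cdot z\in I_x$ satisfies $(0\cdot x)(0\cdot z)=0\cdot z$, i.e.\ $0\cdot z\le 0\cdot x$. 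I will also use repeatedly that $0\cdot x=\abf$ implies $x=\abf$ (Proposition~\ref{P:Identities}).

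For (1), if $\NVL$ holds the only generalized zeros are $0\cdot\abf=\abf$ and $0\cdot x=0$ for $x\neq\abf$, so $0\cdot M=\{0,\abf\}$; conversely, if $0\cdot M=\{0,\abf\}$ and $x\neq\abf$ then $0\cdot x\in\{0,\abf\}$, and $0\cdot x=\abf$ is excluded by Proposition~\ref{P:Identities}, leaving $0\cdot x=0$. For (2), I would take a minimal $z\in 0\cdot M\setminus\{\abf\}$ and $x\in M_z\setminus\{z\}$; then $0\cdot x\inv\le z$ and minimality of $z$ give $0\cdot x\inv\in\{z,\abf\}$. If $0\cdot x\inv=\abf$ then $x\inv=\abf$, so $\AVL$ yields $0\cdot x=x$, i.e.\ $x=z$, a contradiction; hence $0\cdot x\inv=z$, $x\inv\in M_z$, and $x\cdot x\inv=1+z$ is the unit of $M_z$, so every nonzero element of $M_z$ is invertible and $M_z$ is a field.

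Part (3) is then mostly a corollary: $\NVL$ gives $0\cdot M=\{0,\abf\}$ by (1), so $M=M_0\sqcup\{\abf\}$, and as $0$ is the unique minimal nonzero zero, (2) makes $\mathbb{F}:=M_0$ a field; the converse is a direct verification that over $\mathbb{F}\sqcup\{\abf\}$ one has $0\cdot M=\{0,\abf\}$ and that the only $x$ with $x\inv=\abf$ are $0$ and $\abf$. For (4), assuming $\CIL$, I would argue that no $z\in 0\cdot M$ can satisfy $z\neq 0,\abf$: viewing such $z$ as the zero of $M_z$, $\CIL$ gives $z\cdot z\inv=1$, and since $z\cdot z\inv=1+0\cdot z\inv$ applying $0\cdot$ forces $0\cdot z\inv=0$, whence $z\cdot z\inv\in M_z$ equals $1\in M_0$, impossible for $z\neq 0$. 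Thus $M=M_0\sqcup\{\abf\}$, and for $x\in M_0\setminus\{0\}$ the relation $x\cdot x\inv=1$ together with $0\cdot x\inv\le 0\cdot x=0$ rules out $x\inv=\abf$ and exhibits $x\inv\in M_0$, so $M_0$ is a field; the converse is immediate.

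The genuinely delicate steps are all of the same kind: careful bookkeeping of which ring $M_w$ a given element occupies. Establishing $0\cdot x\inv\le 0\cdot x$ and recognizing that a product landing on the value $1$ must sit in $M_0$ are precisely what forbid spurious inverses and force the lattice to collapse in (1) and (4); everything else is routine once these placements are pinned down.
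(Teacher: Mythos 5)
Your proof is correct and follows essentially the same route as the paper's: part (1) via the implication $0\cdot x=\abf\Rightarrow x=\abf$, part (2) via minimality of $z$ in $0\cdot M\setminus\{\abf\}$ combined with $\AVL$, part (3) as a corollary of (1) and (2), and part (4) by using $\CIL$ together with the order fact $0\cdot x\cdot x\inv\leq 0\cdot x$ to collapse the lattice $0\cdot M$ to $\{0,\abf\}$. The only cosmetic difference is that in (4) you argue by contradiction on a generalized zero $z\neq 0,\abf$ and invoke disjointness of the rings $M_z$, whereas the paper applies $\CIL$ directly to an arbitrary $x\neq 0,\abf$ and uses that $0$ is the maximum of $0\cdot M$; the underlying ingredients are identical.
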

\begin{proof}
\begin{enumerate}
    \item If $M$ satisfies $\NVL$, then clearly $0\cdot M = \{\abf,0\}$. 
    
    Conversely, suppose that $M$ is a meadow such that $0\cdot M = \{\abf,0\}$, and let $x\neq \abf$. Then $0\cdot x\neq \abf$, by Proposition \ref{P:Identities} $(8)$. Hence  $0\cdot x=0$, i.e.\ $M$ satisfies $\NVL$.
    \item    Let $x\in M_z$ be non-invertible in $M_z$. Since $z$ is minimal in $0\cdot M\setminus \{\abf\}$, it must be the case that the inverse of $x$ is $\abf$. Now, since $\AVL$ holds, we have $0\cdot x = x$, which means that $0\cdot x$ is the zero of the ring $M_z$. Hence $M_z$ is a field.
    \item The proof follows immediately from Part 1 and Part 2.
    \item Let $x\in M\setminus\{0,\abf\}$. One can easily see that $0\cdot x \cdot x\inv\leq 0\cdot x$. Since $M$ satisfies $\CIL$ we have 
$$0 =0\cdot 1= 0\cdot x \cdot x\inv\leq 0\cdot x.$$

Now, since $0$ is the maximum of $0\cdot M$ we must have $0\cdot x = 0$, that is $x\in M_0$. Then $M=M_0\sqcup \{\abf\}$. Also, $\CIL$ entails that for $x\in M_0\setminus \{0\}$,  the inverse of $x$ must be in $M_0$, and so $M_0$ is a field. \qedhere
\end{enumerate}
\end{proof}

From Part 1 of Proposition \ref{P:NVL} we have that if $M$ is a common meadow satisfying $\NVL$, then $M=M_0 \sqcup\{\abf\}$. And the inverse of the elements of $M_0$ that have no inverse in $M_0$ is $\abf$. 

\begin{proposition}\label{P:IdealNVL}
    Let $M$ be a meadow. The set
    $$\Rad(M):=\{x\in M\mid 0 \cdot x\neq 0\}\sqcup \{0\}$$
    is an ideal of $M$ and the quotient $M/\Rad(M)$ is a meadow satisfying $\NVL$. Moreover, $Rad(M)$ is the smallest ideal with that property.
\end{proposition}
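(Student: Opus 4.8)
The plan is to treat the three assertions separately, using throughout the decomposition $M=\bigsqcup_{z\in 0\cdot M}M_z$ of Theorem~\ref{T:union} together with the (meet-)semilattice structure of $0\cdot M$, which has maximum $0$ (Proposition~\ref{P:order}) and minimum $\abf$ (the Remark following Corollary~\ref{C:DirectedLattice}). It is convenient to rewrite $\Rad(M)=(M\setminus M_0)\cup\{0\}$, which is immediate from $M_0=\{x\mid 0\cdot x=0\}$.

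First I would check that $\Rad(M)$ is an ideal. Closure under negation follows from $0\cdot(-x)=0\cdot x$ (Proposition~\ref{P:pre-meadowres}). For the product I would observe that $0\cdot(x\cdot r)\le 0\cdot x$ in $0\cdot M$, a one-line computation using $0\cdot 0=0$ and $0\cdot(x\cdot x)=0\cdot x$ from Proposition~\ref{P:pre-meadowres}; hence if $0\cdot x\ne 0$, that is $0\cdot x<0$, then $0\cdot(xr)<0$ too, so $xr\in\Rad(M)$, and the remaining case $x=0$ reduces to $0\cdot r\in\Rad(M)$, which is clear. For closure under addition the key identity is $0\cdot(x+y)=0\cdot x\cdot y$ from $(\PM_{10})$, which is precisely the meet $(0\cdot x)\wedge(0\cdot y)$ in $0\cdot M$; since the meet of two elements lying strictly below the top $0$ again lies strictly below $0$, the sum of two nonzero elements of $\Rad(M)$ lies in $\Rad(M)$, while the cases with a summand equal to $0$ are trivial. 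Together with $0\in\Rad(M)$ this yields the ideal property.

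Next I would identify the quotient. Assuming $1\ne 0$ (the degenerate case $M_0=\{0\}$ forces $\Rad(M)=M$ and is trivial), one has $\Rad(M)\cap M_{0\cdot z}=M_{0\cdot z}$ exactly when $0\cdot z\ne 0$, so in the notation of Definition~\ref{D:QuotientMeadow} we obtain $N=0\cdot M\setminus\{0\}$ and $\Rad(M)\cap M_0=\{0\}$. Feeding this into the construction shows $M/\Rad(M)\cong M_0\sqcup\{\abf\}$, the meadow of Example~\ref{firstexmp}(2) (invertible by the computation of Example~\ref{Example_revisited}(1), hence a common meadow via Corollary~\ref{C:precomon}); its set of generalized zeros is $\{0,\abf\}$, so it satisfies $\NVL$ by Proposition~\ref{P:NVL}(1).

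Finally, for minimality I would take any ideal $I$ with $M/I$ satisfying $\NVL$ and show $\Rad(M)\subseteq I$. By Proposition~\ref{P:NVL}(1) we have $0\cdot(M/I)=\{0,\abf\}$. The quotient map $\rho\colon M\to M/I$ is a homomorphism of meadows (Theorem~\ref{T:QuotientUniversal}) which sends each stratum $M_{0\cdot z}$ either onto a surviving quotient stratum indexed by $0\cdot z$ (when $0\cdot z\notin N$) or entirely to $\abf$ (when $0\cdot z\in N$), and never merges two distinct surviving strata; hence the induced semilattice map identifies $0\cdot(M/I)$ with $(0\cdot M\setminus N)\cup\{\abf\}$. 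Since $I\ne M$ forces $0\notin N$, the equality $0\cdot(M/I)=\{0,\abf\}$ gives $0\cdot M\setminus N=\{0\}$, i.e.\ $M_{0\cdot z}\subseteq I$ for every $z$ with $0\cdot z\ne 0$; taking the union yields $M\setminus M_0\subseteq I$, and with $0\in I$ we conclude $\Rad(M)\subseteq I$. I expect this last step to be the main obstacle: one must argue precisely how the quotient acts on the lattice $0\cdot M$ so that $\NVL$ genuinely forces every non-top stratum into $I$. Everything else is routine once the identities $0\cdot(x+y)=(0\cdot x)\wedge(0\cdot y)$ and $0\cdot(xr)\le 0\cdot x$ are in hand.
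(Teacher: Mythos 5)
Your proposal is correct and follows essentially the same route as the paper's proof: decompose $\Rad(M)$ as $\bigsqcup_{0\cdot z\in 0\cdot M\setminus\{0\}}M_{0\cdot z}\sqcup\{0\}$, identify $M/\Rad(M)$ with $M_0\sqcup\{\abf\}$ and invoke Proposition~\ref{P:NVL}, then obtain minimality by arguing that $\NVL$ for $M/I$ forces every stratum $M_{0\cdot z}$ with $0\cdot z\neq 0$ into $I$. The only differences are cosmetic: you verify the ideal axioms by direct computation (via $0\cdot(x+y)=(0\cdot x)\cdot(0\cdot y)$ and $0\cdot(xr)\leq 0\cdot x$) where the paper cites Proposition~\ref{P:Identities}, and you are somewhat more explicit than the paper about why the quotient is a genuine common meadow and how $0\cdot(M/I)$ is identified with $(0\cdot M\setminus N)\cup\{\abf\}$.
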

\begin{proof}
  The fact that $\Rad(M)$ is an ideal follows from Proposition \ref{P:Identities}. By Theorem 2.5 we have that $\Rad(M)=\bigsqcup_{0\cdot z\in 0\cdot M\setminus\{0\}}M_{0\cdot z}\sqcup\{0\}$. Then $M/\Rad(M)=M_0/\{0\}\sqcup \{\abf
\}$ is a meadow, and so by Proposition \ref{P:NVL} we have that $M/\Rad(M)$ satisfies $\NVL$. 

Now take an ideal $I$ of $M$ different from $M$ such that $M/I$ satisfies $\NVL$. Note that we must have $I\cap M_0\neq M_0$. Since $M/I$ satisfies $\NVL$ we must have $M/I=I\cap M_0\sqcup \{\abf\}$, and so $R(M)\subseteq I$.
\end{proof}

Recall that given a unital commutative ring $R$ and $I$ an ideal of $R$ then the quotient $R/I$ is a field if and only if $I$ is a maximal ideal in $R$. A similar result holds for ideals of meadows.
\begin{proposition}
    Let $M$ be a meadow and $I$ an ideal of $M$. Then $M/I$ satisfies $\CIL$ if and only if $I$ is a maximal ideal of $M$.
\end{proposition}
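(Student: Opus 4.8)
The plan is to reduce the whole statement to Part 4 of Proposition~\ref{P:NVL} together with the classical characterization recalled immediately before: a quotient $R/J$ of a commutative unital ring is a field exactly when $J$ is a maximal ideal of $R$. By Theorem~\ref{T:QuotientUniversal}, $M/I$ is a pre-meadow with $\abf$ whose ring components are the quotients $M_{0\cdot z}/(I\cap M_{0\cdot z})$ for $0\cdot z\notin N$, and whose top component is $(M/I)_0=M_0/(I\cap M_0)$ (note $0\notin N$ exactly when $I\neq M$). Applying Proposition~\ref{P:NVL}(4) to $M/I$, the quotient satisfies $\CIL$ if and only if both \textbf{(A)} $M_0/(I\cap M_0)$ is a field, i.e.\ $I\cap M_0$ is a maximal ideal of $M_0$, and \textbf{(B)} every other component collapses, i.e.\ $M_{0\cdot z}\subseteq I$ for all $0\cdot z\in 0\cdot M\setminus\{0\}$. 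So it suffices to prove that \textbf{(A)} and \textbf{(B)} together are equivalent to $I$ being a maximal ideal of $M$.

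I would first record the structural description used throughout: an ideal of $M$ is precisely a family of ring ideals $J_z$ of $M_{0\cdot z}$ that is compatible with the transition maps, meaning $f_{0\cdot w,0\cdot z}(J_z)\subseteq J_w$ whenever $0\cdot w\le 0\cdot z$. The forward implication is the discussion preceding Definition~\ref{D:QuotientMeadow}, and the converse is a routine check using that sums and products of elements of levels $\ell_1,\ell_2$ land at level $\ell_1\wedge \ell_2$. With this in hand, the families written below manifestly define ideals. For the implication \textbf{(A)}$\wedge$\textbf{(B)}$\Rightarrow I$ maximal: \textbf{(A)} forces $1\notin I$, so $I\neq M$; and if $J\supsetneq I$ is an ideal, then by \textbf{(B)} we have $J\cap M_{0\cdot z}=M_{0\cdot z}=I\cap M_{0\cdot z}$ for every $0\cdot z\neq 0$, so any element of $J\setminus I$ lies in $M_0$ and $J\cap M_0\supsetneq I\cap M_0$. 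Maximality of $I\cap M_0$ in $M_0$ then gives $J\cap M_0=M_0\ni 1$, whence $J=M$.

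The main work is the converse, and within it the step $I$ maximal $\Rightarrow$ \textbf{(B)}. Suppose some $0\cdot z_0\neq 0$ satisfies $M_{0\cdot z_0}\not\subseteq I$, and let $e=1+0\cdot z_0$ be the unit of $M_{0\cdot z_0}$; then $e\notin I$, since otherwise $M_{0\cdot z_0}=e\cdot M_{0\cdot z_0}\subseteq I$. Consider $J=I+(e)$. The key computation is that for $0\cdot w\le 0\cdot z_0$ the image of $e$ in $M_{0\cdot w}$ is $e+0\cdot w=1+0\cdot w$, the unit of $M_{0\cdot w}$; here I use $0\cdot z_0+0\cdot w=0\cdot z_0\cdot 0\cdot w=0\cdot w$, because on $0\cdot M$ the operations $+$ and $\cdot$ coincide (Proposition~\ref{P:0Dmonoid}) and $0\cdot w\le 0\cdot z_0$ (Definition~\ref{D:order}). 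Hence $y\cdot e=y$ for every $y\in M_{0\cdot w}$, so $(e)$, and therefore $J$, contains all of $M_{0\cdot w}$ for $0\cdot w\le 0\cdot z_0$; in particular $M_{0\cdot z_0}\subseteq J$ and $J\supsetneq I$. On the other hand every generator $r\cdot e$ of $(e)$ sits at level $0\cdot r\cdot 0\cdot z_0\le 0\cdot z_0<0$, so $(e)\cap M_0=\{0\}$ and thus $J\cap M_0=I\cap M_0\subsetneq M_0$; hence $1\notin J$ and $J\neq M$, contradicting maximality. This proves \textbf{(B)}. Granting \textbf{(B)} we then have $I=(I\cap M_0)\sqcup\bigsqcup_{0\cdot z\neq 0}M_{0\cdot z}$; if $I\cap M_0$ were not maximal in $M_0$, choosing $I\cap M_0\subsetneq\mathfrak a\subsetneq M_0$ and setting $J=\mathfrak a\sqcup\bigsqcup_{0\cdot z\neq 0}M_{0\cdot z}$ would yield a proper ideal strictly between $I$ and $M$, again contradicting maximality; so \textbf{(A)} holds.

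I expect the delicate point to be the construction in the previous paragraph: showing that adjoining $e$ pulls in exactly the components below $0\cdot z_0$ while leaving the top component $M_0$ untouched. Making this precise is where the compatible-family description of ideals and the identity ``the level of a sum or product is the meet of the levels'' do the real work; once those are fixed, both the strict inclusion $J\supsetneq I$ and the properness $J\neq M$ follow from the level bookkeeping above.
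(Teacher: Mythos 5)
Your proof is correct and follows essentially the same route as the paper: both reduce, via Proposition \ref{P:NVL}(4), to the statement that $I$ is maximal exactly when $I\cap M_0$ is a maximal ideal of $M_0$ and every component $M_{0\cdot z}$ with $0\cdot z\neq 0$ is contained in $I$ (your condition \textbf{(B)} is precisely the paper's $\Rad(M)\subseteq I$). The only real difference is that you explicitly prove, via the ideal $I+(e)$ with $e=1+0\cdot z_0$, that maximality forces \textbf{(B)} — a step the paper asserts without argument — so your write-up fills in the details the paper leaves implicit.
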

\begin{proof}
    Let $I$ be a maximal ideal of $M$. Then $\Rad(M)\subseteq I$, and $I\cap M_0$ is a maximal ideal of $M_0$ from which we conclude that  $M_0/I\cap M_0$ is a field. Then $M/I=(M_0/I\cap M_0)\sqcup\{\abf\}$ is a meadow that satisfies $\CIL$, by Proposition \ref{P:NVL}.

    Suppose now that $M/I$ satisfies  $\CIL$. Then $M/I=(M_0/I\cap M_0)\sqcup\{\abf\}$, where $M_0/I\cap M_0$ is a field. So, $I\cap M_0$ is a maximal ideal of $M_0$. Since $0\cdot M/I$ only has two elements, we have that $R(M)\subseteq I$ and therefore $I$ is a maximal ideal of $M$.
\end{proof}

\section{The category of common meadows}\label{S:Categories}
In this section we briefly consider common meadows from a categorical perspective in order to showcase some possibilities of further research. For basic notions on category theory we refer to \cite{maclane:71}.
\begin{definition}
    The category of meadows $\Md$ is defined as follows:
\begin{itemize}
    \item The \emph{objects} of $\Md$ are common meadows.
    \item If $M,N \in \Md$, then the \emph{morphisms} from $M$ to $N$ are the elements of $\HomM$, the set of all homomorphisms of meadows from $M$ to $N$.
\end{itemize}
\end{definition}

We shall denote by $\rm{CRing}$ the category of commutative rings with unity, whose objects are commutative rings with unity and morphisms are ring homomorphisms. Recall the if $M$ is a meadow, $M_0$ denotes the set $\{x\in M\mid 0\cdot x = 0  \}$.
\begin{proposition}\label{P:Functors}
    The correspondence that sends a meadow $M$ to the ring $M_0$ and each common meadow homomorphism $f:M\rightarrow N$ to the restriction $f_0:M_0\rightarrow N_0$ defines a functor $R:\Md \rightarrow \rm{CRing}$.
    Conversely, the correspondence that sends a unital commutative ring $R$ to the common meadow $M(R)=R\sqcup\{\abf\}$, and each ring homomorphism $f:R\rightarrow S$ to an  homorphism of meadows $f':M\rightarrow N$ defines a functor $M:\rm{CRing}\rightarrow\Md$.
\end{proposition}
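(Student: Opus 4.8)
The plan is to verify, for each of the two correspondences, the defining data of a functor: that the object assignment lands in the target category, that the morphism assignment produces a legitimate morphism with the correct source and target, and finally that identities and compositions are preserved. Most of the substantive work has already been carried out in the earlier sections, so the proof amounts to assembling the relevant results and checking functoriality, which is purely formal.

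For the functor $R\colon\Md\to\mathrm{CRing}$, I would first observe that the object assignment is well defined: by Theorem~\ref{T:union} applied with $z=0$, the set $M_0$ is a unital commutative ring (with zero $0$ and unit $1$), hence an object of $\mathrm{CRing}$. For the morphism assignment, given a meadow homomorphism $f\colon M\to N$ I would check that $f$ restricts to a map $M_0\to N_0$: if $x\in M_0$ then, using $f(0_M)=0_N$ from Proposition~\ref{P:f(0)_f(a)}, one gets $0_N\cdot f(x)=f(0_M\cdot x)=f(0_M)=0_N$, so $f(x)\in N_0$. That $f_0$ is a unital ring homomorphism is then exactly Proposition~\ref{P:MorphismLattice}(2) specialised to $z=0$, together with $f(1_M)=1_N$ from Definition~\ref{D:Morphism}. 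Functoriality is immediate: the identity of $M$ restricts to the identity of $M_0$, and since $f(M_0)\subseteq N_0$ the restriction of a composite equals the composite of restrictions, giving $(g\circ f)_0=g_0\circ f_0$.

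For the functor $M\colon\mathrm{CRing}\to\Md$, I would first confirm that the object $M(R)=R\sqcup\{\abf\}$ really is a common meadow: it is a pre-meadow with $\abf$ by Example~\ref{firstexmp}(2), and Example~\ref{Example_revisited}(1) shows that each set $I_x$ has a unique maximal element, so $M(R)$ is an invertible pre-meadow and hence a common meadow by Corollary~\ref{C:precomon}. On morphisms, the assignment sends $f\colon R\to S$ to the map $f'$ agreeing with $f$ on $R$ and fixing $\abf$; this is a homomorphism of meadows by Example~\ref{E:RingHom}. Preservation of identities and composition is checked piecewise on the two summands $R$ and $\{\abf\}$: on $R$ it reduces to functoriality of the assignment on rings, and on $\abf$ both sides return $\abf$, so $(\mathrm{id}_R)'=\mathrm{id}_{M(R)}$ and $(g\circ f)'=g'\circ f'$.

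I do not expect a genuine obstacle here, since every non-formal ingredient is supplied by the earlier results; the only points requiring care are the two well-definedness checks — that $f$ carries $M_0$ into $N_0$, and that $M(R)$ is \emph{invertible} rather than merely a pre-meadow with $\abf$ — and these are dispatched by Proposition~\ref{P:f(0)_f(a)} and Example~\ref{Example_revisited}(1) respectively. The remaining verifications are routine bookkeeping.
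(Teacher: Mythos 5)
Your proposal is correct and follows essentially the same route as the paper's proof: the same restriction computation $0_N\cdot f(x)=f(0_M\cdot x)=f(0_M)=0_N$ showing $f(M_0)\subseteq N_0$, the same appeal to the earlier examples to see that $M(R)=R\sqcup\{\abf\}$ is a common meadow and that $f'$ is a meadow homomorphism, and the same routine verification of identities and composites. You are somewhat more explicit about the citations (Theorem~\ref{T:union}, Proposition~\ref{P:MorphismLattice}, Example~\ref{Example_revisited}(1)) than the paper, but the mathematical content is identical.
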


        \begin{proof}
            The correspondence that sends a meadow $M$ to the ring $M_0$, defines a correspondence between objects of the category $\Md$ and the category of rings.
            
            Let $f:M\rightarrow N$ be an homomorphism of meadows and let $x\in M_0$. Then $f(x)\cdot 0 = f(x\cdot 0) = f(0)=0$. That is, the homomorphism of meadows defines a ring homomorphism $f_0:M_0\rightarrow N_0$ by restriction. It is easy to see that if $g:N\rightarrow N'$ is another meadow homomorphism then $(g\circ f)_0=g_0\circ f_0$. 

             The fact that $M=R\sqcup\{\abf\}$ is a common meadow was seen in Example \ref{E:RingHom}. 
    
    Given a ring homomorphism $f:R\rightarrow S$, the map defined by $f'(x)=f(x)$ if $x\in R$ and $f'(\abf)=\abf$ is clearly a ring homomorphism. Additionally, if $g:S\rightarrow T$ is another ring homomorphism one can easily see that $(g\circ f)'$ is equal to $g'\circ f'$.
	\end{proof}

Recall that given functors $F:\mathcal{C}\rightarrow\mathcal{D}$ and $G:\mathcal{D}\rightarrow\mathcal{C}$ one says that $F$ is \emph{right adjoint} to $G$ if there exists a bijection $\mathrm{Hom}_\mathcal{D}(X,F(X))\rightarrow \mathrm{Hom}_\mathcal{C}(G(X),Y)$ for all objects $X\in \mathcal{C}$ and $Y\in \mathcal{D}$. The right adjoint functors are extremely well behaved, in particular they commute with limits, and preserve right exact sequences.
We prove that the functors $M,R$ defined in Proposition \ref{P:Functors} share this relation.
    
 \begin{theorem}
     The functor $M$ is the right adjoint of the functor $R$.
 \end{theorem}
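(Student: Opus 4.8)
The plan is to establish the defining hom-set bijection in the orientation prescribed by the paper's definition of right adjoint, namely a bijection
\[\mathrm{Hom}_{\mathrm{CRing}}(R(N),S)\;\cong\;\mathrm{Hom}_{\Md}(N,M(S)),\]
natural in the meadow $N$ and the ring $S$, where $R(N)=N_0$ and $M(S)=S\sqcup\{\abf\}$. Two pieces of data are essentially forced. Since $R(M(S))=(S\sqcup\{\abf\})_0=S$ literally, the candidate counit $\epsilon_S\colon R(M(S))\to S$ is the identity; and the candidate unit $\eta_N\colon N\to M(R(N))=N_0\sqcup\{\abf\}$ should be the map that is the identity on $N_0$ and sends every element of $N\setminus N_0$ to $\abf$. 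The whole bijection will then read $\phi\mapsto M(\phi)\circ\eta_N$, with inverse $\psi\mapsto \epsilon_S\circ R(\psi)=\psi|_{N_0}$.

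First I would set up the two maps realizing the correspondence. Restriction sends a meadow homomorphism $f\colon N\to M(S)$ to $f|_{N_0}$, which lands in $S=M(S)_0$ and is a ring homomorphism by Proposition~\ref{P:MorphismLattice}. Conversely, a ring homomorphism $\phi\colon N_0\to S$ is sent to the extension $\hat\phi$ given by $\hat\phi(x)=\phi(x)$ for $x\in N_0$ and $\hat\phi(x)=\abf$ otherwise. Checking that $\hat\phi$ respects the meadow operations is routine and uses only that $\abf$ absorbs both operations ($\M_{14}$ and Proposition~\ref{P:Identities}(5)) together with the common-meadow identity $0\cdot(x+y)=0\cdot x\cdot y$: if $x\notin N_0$ then $0\cdot x\neq 0$, and these identities force $x+y$ and $x\cdot y$ to lie again outside $N_0$, so both sides of each homomorphism equation collapse to $\abf$. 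That $\hat\phi$ restricts to $\phi$ on $N_0$ is immediate, giving one of the two composites. The naturality squares in $S$ are then straightforward, since post-composing with $M(g)$, for a ring homomorphism $g\colon S\to S'$, visibly commutes with both restriction and the extension-by-$\abf$ construction.

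The main obstacle is the remaining composite, together with naturality in $N$: one must prove that every meadow homomorphism $f\colon N\to M(S)$ coincides with the extension of its own restriction, i.e.\ that $f$ maps all of $N\setminus N_0$ to $\abf$. Equivalently, the induced lattice homomorphism $f_L\colon 0\cdot N\to 0\cdot M(S)=\{0,\abf\}$ of Proposition~\ref{P:MorphismLattice} must send every idempotent strictly below the top to $\abf$, so that no lower ring $N_z$ is carried into $S$. This is the delicate heart of the argument — the uniqueness half of the universal property, equivalently naturality of $\eta$ — and the hard part is to exclude a homomorphism that "promotes" a proper idempotent $0\cdot z\neq 0$ to $0$ by mapping $N_z$ onto a subring of $S$. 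I would attack it by combining the fact that $f_L$ preserves the maximum $0$ with the behaviour of $f$ on the units $1+z$ of the lower rings $N_z$ and on the transition maps $f_{z,0}\colon N_0\to N_z$ of Proposition~\ref{P:Transitionmaps}, reducing the claim to the action of $f$ on these distinguished idempotents; this is the step on which the entire adjunction hinges and which requires the most care.
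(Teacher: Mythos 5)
Your proposal attacks the wrong hom-set orientation, and the step you yourself flagged as ``the delicate heart'' is not merely delicate --- it is false. Take $N=\z\sqcup\q\sqcup\{\abf\}$, the common meadow of Example~\ref{firstexmp}(3), with top ring $N_0=\z$ and the lower copy of $\q$ beneath it, and take $S=\q$. Then $\mathrm{Hom}_{\mathrm{CRing}}(R(N),S)=\mathrm{Hom}_{\mathrm{CRing}}(\z,\q)$ is a singleton, but $\mathrm{Hom}_{\Md}(N,M(S))$ contains at least two distinct homomorphisms: the extension-by-$\abf$ you describe (the inclusion $\z\hookrightarrow\q$ on $N_0$, with all of $N\setminus N_0$ sent to $\abf$), and also the map sending $m\in\z$ to $m\in\q$ and each $q$ in the lower copy of $\q$ to $q\in S=M(S)_0$. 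One checks directly against Definition~\ref{D:Morphism} that this second map respects sums, products and $1$ (sums and products of an integer with an element of the lower $\q$ are computed in the lower $\q$ via the inclusion, and they match), so it is a bona fide homomorphism of meadows which ``promotes'' the idempotent $0\cdot z\neq 0$ to $0$ --- exactly the phenomenon you hoped to exclude. Both maps restrict to the same ring homomorphism on $N_0$, so restriction is not injective, the two hom-sets have different cardinalities, and no bijection (natural or otherwise) can exist for this pair. The uniqueness half you postponed cannot be completed by any argument about $f_L$, the units $1+z$, or the transition maps.

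The paper proves the mirror bijection: $\mathrm{Hom}_{\mathrm{CRing}}(R,R(N))\cong\mathrm{Hom}_{\Md}(M(R),N)$, given by $f\mapsto f'$, where $f':M(R)\to N$ extends $f:R\to N_0$ by $f'(\abf)=\abf$. In that orientation everything is forced and easy --- $M(R)$ consists of a single top ring and $\abf$, and $f'(\abf)=\abf$ holds automatically by Proposition~\ref{P:f(0)_f(a)} --- so there is no promotion problem, and injectivity and surjectivity of $f\mapsto f'$ are immediate. In standard terminology this bijection exhibits $M$ as the \emph{left} adjoint of $R$ (equivalently, $R$ as the right adjoint of $M$), not the other way around; the theorem's wording, read against the paper's own garbled definition of right adjoint, matches the bijection the paper proves, but your literal, conventionally correct reading of ``$M$ is the right adjoint of $R$'' as $\mathrm{Hom}_{\mathrm{CRing}}(R(N),S)\cong\mathrm{Hom}_{\Md}(N,M(S))$ led you to a statement that is simply not true. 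Had you begun by testing your candidate unit $\eta_N$ on a meadow whose lattice has more than two levels, the failure would have surfaced immediately.
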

 \begin{proof}
     Let $M$ be a common meadow, $R$ a unital commutative ring, and $f:R\rightarrow M_0$ a ring homomorphism. Let $M(R)=R\sqcup\{\abf\}$. Then we can define an homomorphism of meadows $f':M(R)\rightarrow M$ in the following way: we send each $x\in R\subseteq N$ to $f'(x)=f(x)\in M_0\subseteq M$, and $f'(\abf)=\abf$. It is straightforward to see that this defines an  homomorphism of meadows. From the construction we see that the correspondence $f\mapsto f'$ is injective. Now let $g:M(R)\rightarrow M$ be an homomorphism of meadows, we have that $g_0:N_0\rightarrow M_0$ is a ring homomorphism, and since $N_0=R$. We have that $(g_0)'=f$. Hence the correspondence $f\mapsto f'$ is a bijection.
 \end{proof}

\begin{definition}
    The category $\mathrm{LatRing}$ is the category  whose objects are directed lattices of rings $\Gamma=(L,(R_i,f_{i,j})_{i,j\in L})$  and the morphism $\varphi:\Gamma'\rightarrow\Gamma'$, where $\Gamma=(L,(R_i,f_{i,j})_{i,j\in L})$ and $\Gamma'=(L',(R'_i,f'_{i,j})_{i,j\in L'})$ is a lattice homomorphism of the lattices of $\varphi_L:L\rightarrow L'$ together with ring homomorphisms for each $i\in L$ $\varphi_i:R_i\rightarrow R_{\varphi_L(i)}$ such that $ \varphi_j\circ f_{i,j}=f_{\varphi_L(i),\varphi_L(j)}\circ \varphi_i$ whenever $j\leq i$.
\end{definition}
\begin{definition}
    The category $\mathrm{LatRing}^{M}$ is the subcategory of $\mathrm{LatRing}$ such that for all object $\Gamma=(L,(R_i,f_{i,j})_{i,j\in L})\in\mathrm{LatRing}^{M} $ we have for all $i\in L$, and $x\in R_i$ the set: 
    $$J_x=\{j\in L\mid f_{j,i}(x)\in (R_i)^\times\}$$
\end{definition}
Combining Theorem \ref{T:DirectedLattice} and Propositions \ref{P:MorphismLattice} and \ref{P:LatticeHom} we obtain the following theorem.
        
        \begin{theorem}
           The category  $\mathrm{LatRing}^M$ is equivalent to the category $\Md$. 
        \end{theorem}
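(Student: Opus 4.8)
The plan is to establish the equivalence by constructing two functors, $\mathcal{F}:\mathrm{LatRing}^M\to\Md$ and $\mathcal{G}:\Md\to\mathrm{LatRing}^M$, and then exhibiting natural isomorphisms $\mathcal{G}\circ\mathcal{F}\cong\mathrm{Id}$ and $\mathcal{F}\circ\mathcal{G}\cong\mathrm{Id}$. The functor $\mathcal{F}$ is supplied on objects by Theorem~\ref{T:DirectedLattice}: a directed lattice $\Gamma=(L,(R_i,f_{i,j}))$ satisfying the $J_x$-condition is sent to the common meadow $\mathcal{F}(\Gamma)=\bigsqcup_{i\in L}R_i$ with the operations defined there. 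The functor $\mathcal{G}$ is supplied on objects by the decomposition $M=\bigsqcup_{z\in 0\cdot M}M_z$ of Theorem~\ref{T:union} together with the transition maps $f_{z,z'}$ of Proposition~\ref{P:Transitionmaps}: a meadow $M$ is sent to the directed lattice over the lattice $0\cdot M$ whose rings are the $M_z$. That this lands in $\mathrm{LatRing}^M$ (i.e.\ that each $J_x$ has a unique maximal element) is exactly the content of Corollary~\ref{C:precomon} combined with the isomorphism between $I_x$ and $J_x$ noted at the end of the proof of Theorem~\ref{T:DirectedLattice}.

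First I would define $\mathcal{F}$ and $\mathcal{G}$ on morphisms. For $\mathcal{G}$, a meadow homomorphism $f:M\to N$ yields, by Proposition~\ref{P:MorphismLattice}, a lattice homomorphism $f_L:0\cdot M\to 0\cdot N$ together with ring homomorphisms $f_{0\cdot z}:M_{0\cdot z}\to N_{0\cdot f(z)}$; one checks that the square $\varphi_j\circ f_{i,j}=f_{\varphi_L(i),\varphi_L(j)}\circ\varphi_i$ commutes, which amounts to verifying $f(x+z)=f(x)+f(z)$ against the additive form $f_{z,z'}(x)=x+z$ of the transition maps, so this reduces to additivity of $f$. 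For $\mathcal{F}$, a morphism $\varphi:\Gamma\to\Gamma'$ consisting of $\varphi_L$ and the family $(\varphi_i)$ induces a map on disjoint unions sending $x\in R_i$ to $\varphi_i(x)\in R'_{\varphi_L(i)}$; that this is a meadow homomorphism is precisely what Proposition~\ref{P:LatticeHom} certifies, since its four hypotheses (compatibility with $0\cdot x$, with $0\cdot x\cdot y$, the ringwise condition, and compatibility with $x+0\cdot z$) translate directly into the commuting-square and lattice-homomorphism data of $\varphi$. Functoriality of both assignments (preservation of identities and composition) is then routine, following the compatibility $f_{j,k}\circ f_{i,j}=f_{i,k}$ built into directed lattices.

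Next I would construct the two natural isomorphisms. For $\mathcal{G}\circ\mathcal{F}\cong\mathrm{Id}_{\mathrm{LatRing}^M}$, starting from $\Gamma$ one computes $0\cdot\mathcal{F}(\Gamma)=\{0_i\mid i\in L\}$, which is order-isomorphic to $L$ (shown inside the proof of Theorem~\ref{T:DirectedLattice}), and the ring $\mathcal{F}(\Gamma)_{0_i}$ is canonically $R_i$; one must check that the recovered transition maps $x\mapsto x+0_j$ agree with the original $f_{j,i}$, which follows from the definition of the meadow sum via the meet. For $\mathcal{F}\circ\mathcal{G}\cong\mathrm{Id}_{\Md}$, starting from $M$ one has $\mathcal{F}(\mathcal{G}(M))=\bigsqcup_{z}M_z$, and the canonical map back to $M$ sending an element of $M_z$ to itself is a bijection by Theorem~\ref{T:union}; that it is a meadow homomorphism reduces to checking that the meadow operations reconstructed from the meet and transition maps reproduce the original operations of $M$, which is where Proposition~\ref{P:Transitionmaps} (the transition maps are $x\mapsto x+z$) does the work. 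Naturality of both families is a diagram chase using the morphism descriptions above.

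The main obstacle I expect is the verification that $\mathcal{F}\circ\mathcal{G}$ recovers the \emph{operations} of $M$ on the nose, not merely its underlying set and lattice. Concretely, for $x\in M_{z}$ and $y\in M_{z'}$ the reconstructed product is $f_{z\wedge z',z}(x)\cdot f_{z\wedge z',z'}(y)$ computed in the ring $M_{z\cdot z'}$, and one must show this equals the original $x\cdot y$ in $M$; since $z\wedge z'=z\cdot z'$ and $f_{z\cdot z',z}(x)=x+z\cdot z'$, this is the identity $x\cdot y=(x+0\cdot x\cdot y)\cdot(y+0\cdot x\cdot y)$, a computation of exactly the flavor already performed in the proof of Proposition~\ref{P:LatticeHom}. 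Everything else is a structured but essentially mechanical assembly of results already proved; the conceptual weight of the theorem sits in Theorems~\ref{T:union} and \ref{T:DirectedLattice} and Propositions~\ref{P:MorphismLattice} and \ref{P:LatticeHom}, and this proof mainly packages them into functors and checks that the round-trips are the identity up to natural isomorphism.
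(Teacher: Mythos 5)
Your proposal is correct and takes essentially the same route as the paper: the paper's entire proof is the one-line remark that the theorem follows by combining Theorem~\ref{T:DirectedLattice} with Propositions~\ref{P:MorphismLattice} and \ref{P:LatticeHom}, and these are precisely the ingredients you assemble into the functors $\mathcal{F}$ and $\mathcal{G}$ and the two round-trip natural isomorphisms. Your write-up merely supplies the morphism-level definitions and the verification that the reconstructed operations agree with the original ones, details the paper leaves implicit.
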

 The category $\Md$ shares the initial object with the category $\mathrm{CRing}$.
        \begin{proposition}
            The category $\Md$ has an initial object which is $\z\sqcup \{\abf\}$.
        \end{proposition}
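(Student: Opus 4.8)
The plan is to show that $\z\sqcup\{\abf\}$ is initial in $\Md$ by exhibiting, for each common meadow $M$, a unique homomorphism of meadows $h\colon\z\sqcup\{\abf\}\to M$, and then verifying that the only possible such map is forced. First I would recall from Proposition~\ref{P:Functors} and Example~\ref{E:RingHom} that $\z\sqcup\{\abf\}$ is indeed a common meadow, namely $M(\z)$ where $\z$ is the ring of integers. Since $\z$ is the initial object of $\mathrm{CRing}$, there is a unique ring homomorphism $\iota\colon\z\to M_0$ into the ring $M_0=\{x\in M\mid 0\cdot x=0\}$ associated to any meadow $M$; this $\iota$ is the usual map $n\mapsto n\cdot 1_M$.

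Next I would define $h\colon\z\sqcup\{\abf\}\to M$ by $h(n)=\iota(n)=n\cdot 1_M$ for $n\in\z$ and $h(\abf)=\abf_M$, and check that $h$ is a homomorphism of meadows in the sense of Definition~\ref{D:Morphism}. The conditions $h(x+y)=h(x)+h(y)$, $h(x\cdot y)=h(x)\cdot h(y)$ and $h(1)=1_M$ hold on the ring part $\z$ because $\iota$ is a ring homomorphism landing in $M_0$ (so the meet operation in the lattice never leaves $M_0$, and the meadow operations restrict to the ring operations there), while the cases involving $\abf$ follow from axiom $(\M_{14})$, namely $x+\abf=\abf$, together with $x\cdot\abf=\abf$ for all $x\in M$ (Proposition~\ref{P:Identities}~(5)). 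This establishes existence.

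For uniqueness, I would argue that any homomorphism of meadows $g\colon\z\sqcup\{\abf\}\to M$ is completely determined. By Proposition~\ref{P:f(0)_f(a)}~(2) we must have $g(\abf)=\abf_M$, and by Definition~\ref{D:Morphism}~(3) we have $g(1)=1_M$. Since $g$ is additive, an easy induction gives $g(n)=n\cdot 1_M$ for every integer $n$ (using $g(-1)=-1_M$ from Proposition~\ref{P:f(0)_f(a)}~(3) to handle negatives). Hence $g$ agrees with $h$ on all of $\z\sqcup\{\abf\}$, so $h$ is the unique morphism and $\z\sqcup\{\abf\}$ is initial.

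The main obstacle, though a mild one, is making sure that the additive generation of $\z$ really does pin down $g$ on the ring part: one must confirm that the meadow addition on $\z\sqcup\{\abf\}$ coincides with integer addition on $\z$ (which is clear from the construction of $M(\z)$, since the only extra element $\abf$ is absorbing), so that the inductive argument $g(n+1)=g(n)+g(1)=g(n)+1_M$ is legitimate and terminates. Everything else is a routine check that the absorbing element $\abf$ is respected, which is immediate from $(\M_{14})$ and Proposition~\ref{P:Identities}~(5).
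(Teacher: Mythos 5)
Your proof is correct and follows essentially the same route as the paper: both arguments reduce the problem to the fact that a meadow homomorphism out of $\z\sqcup\{\abf\}$ is forced to send $\abf$ to $\abf$ and is determined on the ring part $\z$ by the value at $1$, which must be $1_M$. Your version merely spells out the details (the explicit map $n\mapsto n\cdot 1_M$, the absorption checks via $(\M_{14})$ and Proposition~\ref{P:Identities}, and the induction for uniqueness) that the paper leaves implicit.
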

        \begin{proof}
            Note that any homomorphism from $M=\z\sqcup \{\abf\}$ to a common meadow $N$ is defined by $f_0:M_0\rightarrow N_0$, and such homomorphism is uniquely defined by the value $f(1)$, which is always equal to $1$. Then $M=\z\sqcup \{\abf\}$ is an initial object in the category $\Md$.
        \end{proof}

        Note that in the subcategory $\Md\inv$, where the objects are the same as $\Md$ but the morphisms are the meadow homomorphisms that commute with the inverse, there is no  initial object.

        Given $R$, a fixed commutative ring with unit, we can also study $\Md^R$, the full subcategory of $\Md$ such that for all $M\in \Md^R$ we have that $M_0$ is isomorphic to $R$.

        \begin{proposition}
            Let $M\in\Md^R$ be a common meadow, where $R$ is a commutative ring with unity. Then $M_{0\cdot z}$ is an $R$-module, for all $z\in M$, and $M$ is a directed lattice of $R$-modules.
        \end{proposition}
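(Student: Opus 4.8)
The plan is to use the fact that the zero element $0$ is the maximum of the lattice $0\cdot M$ (Proposition~\ref{P:order}), so that the top ring $M_0$ maps into every other ring $M_z$ of the decomposition of Theorem~\ref{T:union}. Since $M\in\Md^R$, I would fix an isomorphism of rings $\psi\colon R\to M_0$. For each $z\in 0\cdot M$ one has $z\leq 0$, so Proposition~\ref{P:Transitionmaps} supplies a ring homomorphism $f_{z,0}\colon M_0\to M_z$, $x\mapsto x+z$; it is unital because $f_{z,0}(1)=1+z$ is precisely the unit of $M_z$ (Theorem~\ref{T:union}). Composing gives a unital ring homomorphism $\alpha_z:=f_{z,0}\circ\psi\colon R\to M_z$, one for each index $z=0\cdot w$ with $w\in M$.

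First I would invoke restriction of scalars along $\alpha_z$: setting $r\cdot x:=\alpha_z(r)\cdot x$ for $r\in R$ and $x\in M_z$ makes $M_z$ into an $R$-module, the module axioms following at once from $\alpha_z$ being a unital ring homomorphism and $M_z$ being a commutative ring. This proves the first assertion, for every $M_{0\cdot w}$ with $w\in M$.

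Next I would verify that each transition map $f_{z,z'}$ is $R$-linear, which is exactly what upgrades the directed lattice of rings to a directed lattice of $R$-modules. Fix $z\leq z'$ in $0\cdot M$, $r\in R$ and $x\in M_{z'}$. Using the cocycle identity $f_{z,z'}\circ f_{z',0}=f_{z,0}$ of Proposition~\ref{P:Transitionmaps} (applied to $z\leq z'\leq 0$), one computes $f_{z,z'}(\alpha_{z'}(r))=f_{z,z'}(f_{z',0}(\psi(r)))=f_{z,0}(\psi(r))=\alpha_z(r)$, and therefore, since $f_{z,z'}$ is a ring homomorphism,
\begin{align*}
f_{z,z'}(r\cdot x)&=f_{z,z'}(\alpha_{z'}(r)\cdot x)=f_{z,z'}(\alpha_{z'}(r))\cdot f_{z,z'}(x)\\
&=\alpha_z(r)\cdot f_{z,z'}(x)=r\cdot f_{z,z'}(x).
\end{align*}
Together with the additivity and the cocycle identity already granted by Proposition~\ref{P:Transitionmaps}, this shows that $(M_z)_{z\in 0\cdot M}$ with the maps $f_{z,z'}$ is a directed lattice of $R$-modules over $0\cdot M$.

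The computations here are entirely routine; the only conceptual point, and hence the step deserving the most care, is to notice that a \emph{single} structure map emanating from the lattice maximum $M_0\cong R$ controls all the module structures simultaneously, and that the transition maps factor these structure maps through the cocycle identity. Once this is observed, $R$-linearity is forced, so I do not expect any genuine obstacle beyond setting up the restriction-of-scalars formalism consistently across the whole lattice.
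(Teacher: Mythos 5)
Your proof is correct, and it produces the same module structure as the paper's, but it gets there by a more formal route. The paper works internally in the meadow: for $r\in M_0$ and $x\in M_{0\cdot z}$ it defines the action as the meadow product $r\cdot x$ (which lies in $M_{0\cdot z}$ since $0\cdot r=0$), transports along $M_0\simeq R$, and then checks $R$-linearity of the transition maps by the direct computation $f_{z,z'}(r\cdot x)=r\cdot x+z=r\cdot x+r\cdot z=r\cdot(x+z)=r\cdot f_{z,z'}(x)$, which relies on distributivity and the meadow identity $r\cdot z=z$. You instead transport scalars first, along $\alpha_z=f_{z,0}\circ\psi$, and obtain linearity of $f_{z,z'}$ as a purely formal consequence of the cocycle identity $f_{z,z'}\circ f_{z',0}=f_{z,0}$ of Proposition~\ref{P:Transitionmaps}; once that proposition is in hand, no further meadow-specific computation is needed, so your argument applies verbatim to any directed lattice of rings whose ring at the lattice maximum is (isomorphic to) $R$ --- meadows play no special role. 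The two actions in fact coincide: for $x\in M_{0\cdot z}$ one has $(0\cdot z)\cdot x=(0\cdot x)\cdot x=0\cdot(x\cdot x)=0\cdot x=0\cdot z$, the zero of $M_{0\cdot z}$, hence $\alpha_{0\cdot z}(r)\cdot x=\bigl(\psi(r)+0\cdot z\bigr)\cdot x=\psi(r)\cdot x+0\cdot z=\psi(r)\cdot x$, which is exactly the paper's action under the identification $R\simeq M_0$. In short, your version buys generality and a cleaner functorial mechanism (restriction of scalars along a cone of homomorphisms out of the top of the lattice), while the paper's version buys brevity by exploiting the fact that all the rings $M_{0\cdot z}$ sit inside one ambient multiplicative structure.
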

        \begin{proof}
            Let $x\in M_{0\cdot z}$ and $r\in M_0$. We have that $r\cdot x \in M_{0\cdot z}$, and from the fact that $M$ is a common meadow one easily sees that $M_{0\cdot z}$ is an $M_0$-module, and since $M_0\simeq R$ it is also an $R$-module. 
            
            Additionally, note that the ring homomorphisms $f_{z,z'}:M_{z'}\rightarrow M_z$ defined by $f_{z,z'}(x)=x+z$ are  morphisms of $R$-modules since given $r\in M_0$ and $x\in M_{z'}$ one has $$f_{z,z'}(r\cdot x)=r\cdot x+z=r\cdot x +r\cdot z= r\cdot (x+z) = r\cdot f_{z,z'}(x).$$
            Then $M$  is a directed lattice of rings which is also a lattice of $R$-modules.
        \end{proof}

\subsection*{Acknowledgments}

The authors would like to thank an anonymous reviewer, whose comments greatly improved this paper.

Both authors acknowledge the support of FCT - Funda\c{c}\~ao para a Ci\^{e}ncia e Tecnologia under the project: 10.54499/UIDB/04674/2020, and the research center CIMA -- Centro de Investigação em Matemática e Aplicações. 

The second author also acknowledges the support of CMAFcIO -- Centro de Matem\'{a}tica, Aplica\c{c}\~{o}es Fundamentais e Investiga\c{c}\~{a}o Operacional under the project UIDP/04561/2020.

        \bibliographystyle{plain}
\bibliography{References}

\end{document}